\DeclareSymbolFont{SY}{U}{psy}{m}{n}
\DeclareMathSymbol{\emptyset}{\mathord}{SY}{'306}
\theoremstyle{plain}
\newtheorem{thm}{Theorem}[section]
\newtheorem{cor}[thm]{Corollary}
\newtheorem{lem}[thm]{Lemma}
\newtheorem{prop}[thm]{Proposition}
\theoremstyle{definition}
\newtheorem{defn}[thm]{Definition}
\newtheorem{rem}[thm]{Remark}
\newtheorem{ex}[thm]{Example}
\numberwithin{equation}{section}
\def\K{\mathcal K}
\def\e{equivalent}
\def\beq{\begin{eqnarray}}
\def\eeq{\end{eqnarray}}
\def\beqa{\begin{eqnarray*}}
\def\eeqa{\end{eqnarray*}}
\def\T{\boldsymbol T}
\begin{document}
\title{On the irreducibility and weakly homogeneity of a class of operators}

\author{Shanshan Ji and Xiaomeng Wei$^*$}
\curraddr[S. Ji, X. Wei]{School of Mathematical Sciences, Hebei Normal University,
Shijiazhuang, Hebei 050016, China}

\email[S. Ji]{jishanshan15@outlook.com}
\email[X. Wei]{Weixiaomeng1209@163.com}

\thanks{2020 \emph{Mathematics Subject Classification}. Primary 47B13, 32L05 $\cdot$ Secondary 51M15, 53C07}
\thanks{\emph{Key words and phrases.} Block shifts; Irreducibility; Weakly homogeneity; Similarity}
\thanks{This work was supported by National Natural Science Foundation of China (Grant No. 11831006 and 11922108).}
\thanks{* Corresponding author}
\begin{abstract}
To construct more homogeneous operators, B. Bagchi and G. Misra in \cite{d} introduced the operator $\left(\begin{smallmatrix}
T_0 & T_0-T_1 \\
0 & T_1\\
\end{smallmatrix}\right)$ and proved that when $T_0$ and $T_1$ are homogeneous operators with the same unitary representation $U(g)$, it is homogeneous with associated representation $U(g)\oplus U(g)$.
At the same time, they asked an open question, is the constructed operator irreducible?
A. Kor$\acute{a}$nyi in \cite{e} showed that when the (1,2)-entry of the matrix is $\alpha(T_0-T_1)$, $\alpha\in\mathbb{C}$ the above result is also valid, and their unitary equivalence class depends only on $|\alpha|$.
In this case, he and S. Hazra \cite{f} gave a large class of irreducible homogeneous bilateral $2\times2$ block shifts, respectively, which are mutually unitarily inequivalent for $\alpha>0$.
In this note, we generalize the construction to $T=\left(\begin{smallmatrix}
T_0 & XT_1-T_0X \\
0 & T_1\\
\end{smallmatrix}\right)$ and provide some sufficient conditions for its irreducibility.
We also find that for the above-mentioned $T_0,T_1$ and non-scalar operator $X$, $T$ is weakly homogeneous rather than homogeneous. So the weak homogeneity problem related to $T$ is investigated.
\end{abstract}

\maketitle

\section{Introduction}

Let $\mathcal{H}$ be a complex separable Hilbert space and $\mathcal{L}(\mathcal{H})$ denote the collection of bounded linear operators on $\mathcal{H}$.
Let $T_{i}\in\mathcal{L}(\mathcal{H}_{i})$, $i=0,1$. We say $T_{0}$ is similar to $T_{1}\,\,(T_{0}\sim_{s}T_{1})$ if there exists an invertible operator $X:\,\mathcal{H}_{0}\to\mathcal{H}_{1}$ such that $XT_{0}=T_{1}X$. In particular, when $X$ is a unitary operator, we say $T_{0}$ is  unitarily equivalent to $T_{1}\,\,(T_{0}\sim_{u}T_{1})$.

In the study of operator theory, it has been an active topic to find a model for the class of operators satisfying certain conditions.
When dim $\mathcal{H}<\infty$, the Jordan canonical form theorem provides a model for all operators in $\mathcal{L}(\mathcal{H})$ up to similarity.
When unitary equivalence is considered, it is found by spectral theory that every normal matrix can be written as an orthogonal direct sum of scalar operators on the eigenspace.
In the case of dim $\mathcal{H}=\infty$, the orthogonal direct sum will be replaced by a continuous direct sum or direct integration.
However, there is still no general solution for operators whose spectra are not thin, or for direct sums of these operators.
For example, the backward shift operator $U_+^*: l^2(\mathbb{N})\rightarrow l^2(\mathbb{N})$, defined by
$U_+^*(\alpha_0, \alpha_1, \alpha_2, \cdots)=(\alpha_1, \alpha_2, \alpha_3, \cdots)$. That means $\mathbb{D}\subseteq\sigma(U_+^*)$, the spectrum of $U_+^*$.

In 1978, M.J. Cowen and R.G. Douglas systematically studied a subclass $\mathcal{B}_n(\Omega)$ of $\mathcal{L}(\mathcal{H})$ in \cite{a}, which contain $U_+^*$ and their point spectra contain a bounded connected open set $\Omega$ of $\mathbb{C}$.
There are many criteria for local equivalence to determine the unitary equivalence of operators, including Hermitian holomorphic vector bundles, connections, the curvature, local operators, etc in \cite{a, CD2,ChenD, JJHOU}.
Operators in $\mathcal{B}_n(\Omega)$ correspond to the adjoints of the multiplication operators on the reproducing kernel Hilbert space, and the unitary equivalence of them can also be fully described using the reproducing kernel due to \cite{CS, a, zhu}.
Similar transformations break rigidity, so there are not many geometrically similar equivalent results for such operators. Recently, Cowen-Douglas operators with certain structures have been studied \cite{DKT,n,j,JJKX,JKSX,JJK,JJW}.

We know that $U_+^*\in\mathcal{B}_1(\mathbb{D})$ and its curvature is $\mathcal{K}_{U_+^*}(w)=-\frac{1}{(1-|w|^2)^2}$.
G. Misra in \cite{b} proved that if $T\in\mathcal{B}_1(\mathbb{D})$ is contractive, then $\mathcal{K}_{T}(w)\leq\mathcal{K}_{U_+^*}(w)$, $w\in\mathbb{D}$.
For such an operator $T$, R.G. Douglas once asked if there is a point $w_0\in\mathbb{D}$ such that $\mathcal{K}_{T}(w_0)=-\frac{1}{(1-|w_0|^2)^2}$, is it inferred that $T\sim_uU_+^*$?
Subsequently, G. Misra \cite{la} gave a characterization of contractive homogeneous operators in $\mathcal{B}_{1}(\mathbb{D})$
and indicated that the question raised by R.G. Douglas is positive for homogeneous operators, but not always correct for other operators.
Some affirmative answers are also given in \cite{MR}.

Irreducible homogeneous operators in the Cowen-Douglas class have attracted much attention, while a huge theory has been built.
Many impressive results have been obtained using the representation theory of Lie groups, complex geometry and matrix structures, including explicit representations of these operators and unitary classification (see \cite{020,BG,BM3,BM,JJKM,AM,KM0,KM1,hoM}).
These operators are not only rich in structure, but in most cases also serve as models for other operators.
In \cite{c}, all homogeneous shift operators are listed due to B. Bagchi and G. Misra, it follows that there are many homogeneous operators outside of Cowen-Douglas operators.
Then some attempts were made to construct more matrix-valued homogeneous operators
and several open questions in this research direction were listed, one of them is whether the constructed operator $\left(\begin{smallmatrix}
T_0 & T_0-T_1 \\
0 & T_1\\
\end{smallmatrix}\right)$ is irreducible, where $T_0$ and $T_1$ are homogeneous operators with the same unitary representation (see \cite{d}).
Generalizing the above operator to $\left(\begin{smallmatrix}
T_0 & \alpha(T_0-T_1) \\
0 & T_1\\
\end{smallmatrix}\right)$ for $\alpha\in\mathbb{C}$, A. Kor$\acute{a}$nyi in \cite{e} proved their unitary classification only depends on $|\alpha|$.
In this form of operators, A. Kor$\acute{a}$nyi
first described a class of irreducible homogeneous bilateral block shifts, where $T_0$ and $T_1$ are shifts with weight sequences $\{(\frac{n+a}{n+b})^{\frac{1}{2}}\}_{n\in\mathbb{Z}}$ and $\{(\frac{n+b}{n+a})^{\frac{1}{2}}\}_{n\in\mathbb{Z}}$, $0<a,b<1$ and $a\neq b$.
Afterward, S. Hazra \cite{f} also found two classes of irreducible homogeneous bilateral 2-by-2 shifts, one of which has the structure shown above.
It was also shown that these three classes of operators are mutually unitarily inequivalent when the parameter $\alpha>0$.
From this, a new operator structure is generalized as follows:
\begin{equation}\label{202310.191}
T=\left(\begin{smallmatrix}
T_0 & XT_1-T_0X \\
0 & T_1\\
\end{smallmatrix}\right)
\end{equation}
is equipped with a non-zero bounded linear operator $X$.
It is known that irreducible operators are minimal operator units modulo unitary equivalence. Inspired by the above results, we consider when $T$ in (\ref{202310.191}) is irreducible.

Homogeneous operators have yielded numerous results. As its extension, weakly homogeneous operators are defined (see \cite{d,DNCM}).
In \cite{i}, S. Ghara pointed out the necessary and sufficient conditions for the multiplication by the coordinate function $z$ acting on a Hilbert space $\mathcal{H}_K$ possessing a sharp reproducing kernel $K$, to be weakly homogeneous, and gave some nontrivial results on weakly homogeneous, so that many high index weakly homogeneous operators were obtained.
So far, the characterization of weakly homogeneous operators remains a challenging problem, even for operators of index one in the Cowen-Douglas class.
Because $T$ in (\ref{202310.191}) is strongly reducible, its homogeneity has expired. Thus, it is natural to investigate their weak homogeneity.

The paper is organized as follows.
In \S{2}, we introduced some preliminaries. 
In \S{3}, we discussed when $T$ in (\ref{202310.191}) is irreducible and provide some sufficient conditions, which is related to the open question 11 proposed by B. Bagchi and G. Misra in \cite{d}.
In \S{4}, some weakly homogeneous problems of $T$ in (\ref{202310.191}) were investigated.

\section{Preliminaries}

\subsection{Cowen-Douglas operators}

M.J. Cowen and R.G. Douglas in \cite{a} introduced a class of operators $\mathcal{B}_n(\Omega)$, named Cowen-Douglas operators, defined as follows:

\begin{defn} \cite{a}
For a connected open subset $\Omega$ of $\mathbb{C}$ and a positive integer $n$, let

$$\begin{array}{lll}
\mathcal{B}_n(\Omega):=\{T\in\mathcal{L}(\mathcal{H})|&(1)\,\,\Omega\subset\sigma(T),\\
&(2)\,\,\mbox{ran}(T-w)=\mathcal{H}\,\,\text{for\,all}\,\,w\in\Omega,\\
&(3)\,\,\bigvee_{w\in\Omega}\mbox{ker}(T-w)=\mathcal{H},\\
&(4)\,\,\mbox{dim}\,\mbox{ker}(T-w)=n\,\,\text{for\, all}\,\,w\in\Omega\},
\end{array}$$
where $\sigma(T)$ is the spectrum of the operator $T$ and $\bigvee$ stands for the closed linear span.
\end{defn}

If $T$ is in $\mathcal{B}_{n}(\Omega)$, then it is possible to choose $n$ linearly independent eigenvectors in $\mbox{ker}(T-w)$, which are holomorphic as functions. Thus $w\mapsto\mbox{ker}(T-w)$ defines a rank $n$ Hermitian holomorphic vector bundle $E_{T}$ over $\Omega$,
$$E_{T}=\{(w,x)\in\Omega\times\mathcal{H}:x\in\mbox{ker}(T-w)\}$$ with the natural projection map $\pi:E_{T}\to\Omega,\pi(w,x)=w$.
Let $\{\gamma_{i}\}^{n}_{i=1}$ be a holomorphic frame of $E_{T}$. Then $\{\gamma_{i}(w)\}^{n}_{i=1}$ is a basis of $\mbox{ker}(T-w)$ and the metric of $E_{T}$ is the Gram matrix $h(w)=(\langle\gamma_{j}(w),\gamma_{i}(w)\rangle)^{n}_{i,j=1}$.
The curvature is given by $\mathcal{K}_{T}(w)=-\frac{\partial}{\partial\overline{{w}}}[h^{-1}(w)\frac{\partial}{\partial{w}}h(w)],w\in\Omega$. In fact, the curvature depends on the choice of the frame, except when $n=1$.
In the case of $E_T$ is a line bundle, its curvature is $\mathcal{K}_{T}(w)=-\frac{\partial^{2}}{\partial{w}\overline{\partial}w}\mbox{log}\|\gamma(w)\|^{2}$, where $\gamma$ is a non-zero holomorphic section of $E_T$.

A function $K:\Omega\times\Omega\rightarrow \mathcal{M}_n(\mathbb{C})$ is said to be a non-negative definite kernel if for any subset $(w_1,\cdots,w_k)$ in $\Omega$, the $k\times k$ block matrix
$\big(K(w_i,w_j)\big)_{i,j=1}^k$ is non-negative definite, that is, $\sum_{i,j=1}^k\langle K(w_i,w_j)\eta_j,\eta_i\rangle\geq0,\eta_1,\cdots,\eta_n\in\mathbb{C}^n$.
We say that $K(\cdot,\cdot)$ is sesqui-analytic, if $K$ is holomorphic for the first variable and anti holomorphic for the second variable.
For a sesqui-analytic non-negative definite kernel $K:\Omega\times\Omega\rightarrow \mathcal{M}_n(\mathbb{C})$, a well-known theorem given by Moore that there exists a unique Hilbert space $\mathcal{H}_K$ consisting of $\mathbb{C}^n$-valued
holomorphic functions $E_w^*\xi=K(\cdot,w)\xi$ on $\Omega$ such that the evaluation map $E_w$ is bounded for each $w\in\Omega$ and $K$ is the reproducing kernel of
$\mathcal{H}_K$, where the reproducing property is $\langle f,K(\cdot,w)\xi\rangle_{\mathcal{H}_K}=\langle f(w),\xi\rangle_{\mathbb{C}^n}$, $\xi\in\mathbb{C}^n$.
Based on this, for $T\in\mathcal{B}_n(\Omega)$, $T$ can be realized as the adjoint of the multiplication operator $M_z$ on a reproducing kernel Hilbert space $\mathcal{H}_K$ of holomorphic $\mathbb{C}^n$-valued functions on $\Omega^*$ with reproducing kernel $K$, denoted by $T\sim_u(M_z^*,\mathcal{H}_K)$, and $M_z^*K(\cdot,\bar{w})\xi=wK(\cdot,\bar{w})\xi$ for $w\in\Omega$ and $\xi\in\mathbb{C}^n$.
The reader is referred to \cite{CS,a, zhu}.

\subsection{Homogeneous operators}\label{2.3who}

Let $M\ddot{o}b$ denote the $M\ddot{o}bious$ group of all biholomorphism automorphisms of $\mathbb{D}:=\{z\in\mathbb{C}:|z|<1\}$. Then the form of $\phi$ in $M\ddot{o}b$ is $\phi(z)=e^{i\theta}\frac{z-a}{1-\overline{a}z},\theta\in\mathbb{R},a\in\mathbb{D}$.
Operator $T\in\mathcal{L}(\mathcal{H})$ is said to be homogeneous if $\sigma(T)\subset\overline{\mathbb{D}}$, the closed unit disc and $\phi(T)\sim_{u}T$ for every $\phi$ in $M\ddot{o}b$ (see \cite{020,la,hoM}).

Let $\mathcal{U}(\mathcal{H})$ be the group of unitary operators on $\mathcal{H}$. A Borel function $\pi:\,G\to\mathcal{U}(\mathcal{H})$ is said to be a projective presentation of $G$ on the Hilbert space $\mathcal{H}$.
If there is a projective representation $\pi$ of $M\ddot{o}b$ on $\mathcal{H}$ with the property $\phi(T)=\pi(\phi)^{*}T\pi(\phi),$
then $\pi$ is said to be the representation associated with the operator $T$.
For every irreducible homogeneous operator, there exists a unique (up to equivalence) projective representation associated with it (see \cite{c}).

A complete characterization of the contractive homogeneous operator in $\mathcal{B}_{1}(\mathbb{D})$ is pointed out by G. Misra in \cite{la} to be that its curvature is equal to $-\lambda(1-|w|^{2})^{-2}$ for some positive real number $\lambda$.
That means $T$ and $T^{*}$ are homogeneous, if $T\sim_{u}(M^{*}_{z},\mathcal{H}_{K^{(\lambda)}})$,  $K^{(\lambda)}(z,w)=\frac{1}{(1-z\overline{w})^{\lambda}}$, $z,w\in\mathbb{D}$.
Besides, there are three bilateral shift operators that are also homogeneous operators: the unweighted bilateral shift $B$, $K_{a,b}$ with weight sequence $\sqrt{\frac{n+a}{n+b}},n\in\mathbb{Z}$ for any two distinct real numbers $a$ and $b$ in $(0,1)$, and $B_{x}$ with weight sequence $\cdots,1,x,1,\cdots$ ($x$ in the zeroth slot, $1$ elsewhere). The above list is exhaustive.

As a generalization of homogeneous operators, weakly homogeneous operators are defined in \cite{d,DNCM}.
Let $T\in\mathcal{L}(\mathcal{H})$, the operator $T$ is said to be a weakly homogeneous operator if $\sigma(T)\subseteq\bar{\mathbb{D}}$ and for each $\phi\in M\ddot{o}b$, $\phi(T)\sim_{s}T$.
For $M_z$ acting on $\mathcal{H}_K$, the boundedness of the composition operator $C_\phi$, defined by $C_\phi(f)=f\circ\phi, f\in\mathcal{H}_K$ means the weak homogeneity of $M_z$.
For any $\gamma\in\mathbb{R}$, let $K_{(\gamma)}$ be the positive definite kernel given by
$K_{(\gamma)}(z,w)=\sum\limits_{n=0}^\infty(n+1)^\gamma z^n\bar{w}^n$, $z,w\in\mathbb{D}$.
N. Zorboska\cite{g}, C.C. Cowen and B.D. MacCluer\cite{h} proved that $C_\phi$ is bounded on $\mathcal{H}_{K^{(\lambda)}}$, $\lambda>0$ and $\mathcal{H}_{K_{(\gamma)}}$, $\gamma\in\mathbb{R}$, that is, $(M_z^*,\mathcal{H}_{K^{(\lambda)}})$ and
$(M_z^*,\mathcal{H}_{K_{(\gamma)}})$ are weakly homogeneous.

\subsection{Irreducible operator}

Let $T\in\mathcal{L}(\mathcal{H})$. The commutant algebra of $T$ is $\mathcal{A}'(T):=\{X|XT=TX\}$.
For a closed linear subspace $\mathcal{M}$ of $\mathcal{H}$, $\mathcal{M}$ is said to be a reduced subspace of $T$ if both $\mathcal{M}$ and $\mathcal{M}^{\bot}$ are invariant subspaces of $T$.
$T\in\mathcal{L}(\mathcal{H})$ is said to be irreducible if $T$ only has trivial reduced subspaces, equivalently, $\mathcal{A}'(T)$ only has trivial orthogonal projections in \cite{Halmos}. $T$ is strongly irreducible, if $\mathcal{A}'(T)$ does not contain idempotent operators other than 0 and $I$.
In what follows, $T\in(SI)$ means $T$ is strongly irreducible. 

C.L. Jiang and his collaborators introduced the $K_0$-group of the operator commutant algebra into the study of operator theory and provided a similar classification theorem of Cowen-Douglas operators (see \cite{q,01,o}).
Operator $T\in\mathcal{L}(\mathcal{H})$ has a finite (SI) decomposition if and only if there exist $k\in\mathbb{N}$, $n_{1},\cdots,n_{k}\in\mathbb{N}$ and $T_{1},\cdots,T_{k}\in(SI)$ such that $T_{i}\nsim_{s}T_{j}(i\neq j)$ and $T=T^{(n_{1})}_{1}\oplus\cdots\oplus T^{(n_{k})}_{k}$ (see \cite{q}).

\begin{defn}\cite{q}
Let $T\in\mathcal{L}(\mathcal{H})$ have a finite (SI) decomposition $T=T^{(n_{1})}_{1}\oplus \cdots\oplus T^{(n_{k})}_{k}$. We say $T$ has a unique (SI) decomposition up to similarity if there exists $S$ such that $S\sim_{s}T$ and $S$ has a finite (SI) decomposition $S=S^{(m_{1})}_{1}\oplus\cdots\oplus S^{(m_{l})}_{l}$, then $k=l$ and
there exists a permutation $\pi$ of the set $\{1,2,\cdots,k\}$ such that $T^{(n_{i})}_{i}\sim_{s}S^{(m_{\pi(i)})}_{\pi(i)}$ and $n_{i}=m_{\pi(i)}$.
\end{defn}

\begin{thm}\cite{o}\label{220506.1}
Let $A,B\in\mathcal{B}_{n}(\mathbb{D})$. Suppose that $A^{(n_{1})}_{1}\oplus\cdots\oplus A^{(n_{k})}_{k}$ is a finite (SI) decomposition of $A$.
Then $A\sim_{s}B$ if and only if:
\begin{enumerate}
  \item $(K_{0}(\mathcal{A}'(A\oplus B)),\bigvee(\mathcal{A}'(A\oplus B)),I)\cong(\mathbb{Z}^{(k)},\mathbb{N}^{(k)},1)$;
  \item The isomorphism $h$ from $\bigvee(\mathcal{A}'(A\oplus B))$ to $\mathbb{N}^{(k)}$ sends $[I]$ to $(2n_{1},2n_{2},\cdots,2n_{k})$, i.e., $h([I])=2n_{1}e_{1}+2n_{2}e_{2}+\cdots+2n_{k}e_{k}$, where $I$ is the unit of $\mathcal{A}'(A\oplus B)$ and $\{e_{i}\}^{k}_{i=1}$ are the generators of $\mathbb{N}^{(k)}$.
\end{enumerate}
\end{thm}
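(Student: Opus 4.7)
\emph{Strategy.} The result is a K-theoretic classification of Cowen--Douglas operators up to similarity, of the type developed by C.L. Jiang and his collaborators. The plan is to prove both implications by analysing the structure of the commutant algebra $\mathcal{A}'(A\oplus B)$ and its $K_0$-group, leveraging the rigidity of Cowen--Douglas operators together with the local nature of the commutant of an $(SI)$ operator. The forward direction is a direct computation once one reduces $A\oplus B$ to $A\oplus A$; the backward direction has to reconstruct an honest intertwiner from abstract $K$-theoretic data.

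\emph{Forward direction.} Assume $A\sim_s B$ via an invertible $X$ with $XA=BX$. Conjugation by $\operatorname{diag}(I,X)$ identifies $\mathcal{A}'(A\oplus B)$ with $\mathcal{A}'(A\oplus A)\cong M_2(\mathcal{A}'(A))$. Using the hypothesis $A=\bigoplus_{i=1}^k A_i^{(n_i)}$ and the Cowen--Douglas rigidity that two non-similar Cowen--Douglas operators admit no nonzero intertwiner, $\mathcal{A}'(A)\cong\bigoplus_{i=1}^k M_{n_i}(\mathcal{A}'(A_i))$. Each $A_i\in(SI)$ has a local commutant with $\mathcal{A}'(A_i)/\operatorname{rad}\cong\mathbb{C}$, whence $K_0(\mathcal{A}'(A_i))\cong\mathbb{Z}$ with positive cone $\mathbb{N}$. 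Morita invariance of $K_0$ and the direct sum decomposition together give $(K_0(\mathcal{A}'(A\oplus B)),\bigvee)\cong(\mathbb{Z}^{(k)},\mathbb{N}^{(k)})$. Tracking the unit, each $A_i^{(n_i)}$ contributes $n_i$ to the $i$-th coordinate from the first $A$-slot and another $n_i$ from the second (originally the $B$-slot, identified with $A$ by $X$), so $[I]\mapsto(2n_1,\ldots,2n_k)$.

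\emph{Backward direction.} Conversely, $B\in\mathcal{B}_n(\mathbb{D})$ itself admits a finite $(SI)$ decomposition $B=\bigoplus_{j=1}^l B_j^{(m_j)}$ with mutually non-similar $B_j\in(SI)$. The joint decomposition of $A\oplus B$ collects the $A_i$'s and $B_j$'s according to similarity class, and the same computation as above shows that the rank of $K_0(\mathcal{A}'(A\oplus B))$ equals the number of distinct similarity classes among these combined summands. Condition (1) forces this number to be exactly $k$, so every $B_j$ must be similar to some $A_i$; write $B_j\sim_s A_{\pi(j)}$. Condition (2), that $h([I])=(2n_1,\ldots,2n_k)$, equates the $i$-th coordinate with $n_i+\sum_{\pi(j)=i}m_j$, forcing each of these sums to equal $n_i$. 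Hence $\pi$ is a permutation of $\{1,\ldots,k\}$, $l=k$, $m_j=n_{\pi(j)}$, and assembling the individual similarities gives $A\sim_s B$.

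\emph{Main obstacle.} The technically heavy step is converting the abstract $K_0$-isomorphism into an actual similarity between $A_{\pi(j)}$ and $B_j$. One lifts the minimal idempotents in $K_0(\mathcal{A}'(A\oplus B))$ to genuine idempotents in the commutant itself; these idempotents carve out invariant subspaces on which $A\oplus B$ restricts to operators whose intertwiners can be built by hand. The lifting relies on the fact that idempotents lift modulo the Jacobson radical in commutants of Cowen--Douglas operators, which is routine in finite dimensions but requires care in the Banach-algebraic setting and forms the technical core of the Jiang framework. I expect the most delicate bookkeeping to be ensuring that the permutation $\pi$ produced by matching $[I]$ is genuinely realized by the intertwiners coming from the lifted idempotents, rather than merely compatible with them on the level of $K$-classes.
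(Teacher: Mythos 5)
The paper offers no proof of this statement to compare against: it is quoted directly from Jiang, Guo and Ji \cite{o}, so your sketch has to stand on its own, and as written it contains two false structural claims at the point where all the real work happens. First, it is not true that non-similar Cowen--Douglas operators admit no nonzero intertwiner: for instance the adjoint of the inclusion of the Hardy space into the Bergman space intertwines the two backward shifts, which both lie in $\mathcal{B}_1(\mathbb{D})$ and are not similar. Consequently $\mathcal{A}'(A)$ is in general \emph{not} isomorphic to $\bigoplus_{i=1}^k M_{n_i}(\mathcal{A}'(A_i))$; there are nontrivial off-diagonal intertwining spaces, and handling them is part of the problem. Second, the claim $\mathcal{A}'(A_i)/\mathrm{rad}\,\mathcal{A}'(A_i)\cong\mathbb{C}$ fails already for the simplest strongly irreducible examples: the commutant of the backward shift on the Hardy or Bergman space is isomorphic to $H^\infty(\mathbb{D})$, which is semisimple and far from one-dimensional. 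The assertion that $K_0(\mathcal{A}'(A_i))\cong\mathbb{Z}$, with the correct positive cone and order unit, is therefore not a consequence of Morita invariance plus these reductions; it is essentially equivalent to the uniqueness of finite (SI) decompositions of Cowen--Douglas operators, which is the deep content of \cite{o} (and of Theorem \ref{220506.2} from \cite{01}), proved there through idempotent lifting in the commutant and the holomorphic vector bundle/spectral picture, not by inspection.

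The backward direction has the same circularity: you invoke, without proof, that for a direct sum of pairwise non-similar strongly irreducible Cowen--Douglas summands the ordered $K_0$-group of the commutant is $(\mathbb{Z}^{(r)},\mathbb{N}^{(r)})$ with the unit recording multiplicities, and that agreement of $K_0$-classes of idempotents can be upgraded to genuine similarity of the corresponding restrictions. These are exactly the steps you defer to your ``main obstacle'' paragraph, and they are the theorem, not routine preparation for it. Granting them, your bookkeeping is fine: the $B_j$ in an (SI) decomposition are mutually non-similar, so the map $\pi$ is injective, and the coordinate identities $n_i+\sum_{\pi(j)=i}m_j=2n_i$ force surjectivity and $m_j=n_{\pi(j)}$, whence $A\sim_s B$. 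But because the two justifications you substitute for the hard input (absence of intertwiners between non-similar summands, and triviality of the commutant modulo its radical) are false, the proposal as written does not constitute a proof of the cited classification theorem.
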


\begin{thm}\cite{01}\label{220506.2}
Two strongly irreducible Cowen-Douglas operators $A$ and $B$ are similar if and only if there exists a positive integer $n$ such that $A^{(n)}\sim_{s}B^{(n)}$.
\end{thm}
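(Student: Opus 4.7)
The plan is to deduce this from Theorem \ref{220506.1} together with the standard stability of $K_0$ under matrix amplifications. The ``only if'' direction is immediate: if $A\sim_s B$, take $n=1$.

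For the ``if'' direction, suppose $A^{(n)}\sim_s B^{(n)}$. Since $A,B\in(SI)$, the only finite (SI) decompositions of $A^{(n)}$ and of $B^{(n)}$ have a single similarity class, namely $A^{(n)}$ itself and $B^{(n)}$ itself. Applying Theorem \ref{220506.1} to the pair $(A^{(n)},B^{(n)})$ translates the assumed similarity into the $K_0$-theoretic statement
$$\bigl(K_0(\mathcal{A}'(A^{(n)}\oplus B^{(n)})),\,\bigvee(\mathcal{A}'(A^{(n)}\oplus B^{(n)})),\,[I]\bigr)\cong(\mathbb{Z},\mathbb{N},2n).$$
Next, identifying $A^{(n)}\oplus B^{(n)}$ with $(A\oplus B)^{(n)}$ yields an algebra isomorphism $\mathcal{A}'(A^{(n)}\oplus B^{(n)})\cong M_n(\mathcal{A}'(A\oplus B))$. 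The $K_0$-stability under matrix amplification then produces an isomorphism of the associated triples that preserves the ordered monoid structure and rescales the class of the identity, sending $[I_{M_n}]$ to $n[I_{\mathcal{A}'(A\oplus B)}]$. Composing this with the previous isomorphism and dividing out the factor of $n$ gives
$$\bigl(K_0(\mathcal{A}'(A\oplus B)),\,\bigvee(\mathcal{A}'(A\oplus B)),\,[I]\bigr)\cong(\mathbb{Z},\mathbb{N},2),$$
which is precisely the criterion of Theorem \ref{220506.1} applied to the pair $(A,B)$, each equipped with its trivial (SI) decomposition. Hence $A\sim_s B$.

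The main obstacle I anticipate is the careful verification of the $K_0$-stability step — specifically, that the isomorphism $\mathcal{A}'((A\oplus B)^{(n)})\cong M_n(\mathcal{A}'(A\oplus B))$ descends to an isomorphism compatible with the positive monoid $\bigvee$ and rescales $[I]$ by $n$. Since $A,B\in(SI)$ and lie in the Cowen-Douglas class, their commutants contain no nontrivial idempotents, which means every idempotent in $M_n(\mathcal{A}'(A\oplus B))$ is block-matrix-similar to a block-diagonal one whose entries lie in $\mathcal{A}'(A)\oplus\mathcal{A}'(B)$ up to off-diagonal intertwiners; the resulting computation is essentially bookkeeping within the algebraic $K_0$-framework of \cite{q,01,o}, but it is the step that carries the real content of the reduction from $A^{(n)}\sim_s B^{(n)}$ back to $A\sim_s B$.
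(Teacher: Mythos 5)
This statement is quoted from Jiang \cite{01}; the paper itself gives no proof, so there is nothing internal to compare against --- your proposal should be judged as a derivation from Theorem \ref{220506.1}, and as such it is essentially correct. The ``only if'' direction is trivial, and your ``if'' direction works: applying Theorem \ref{220506.1} to the pair $(A^{(n)},B^{(n)})$ with the trivial (SI) decomposition $k=1$, $n_{1}=n$ gives $\bigvee(\mathcal{A}'(A^{(n)}\oplus B^{(n)}))\cong\mathbb{N}$ with $[I]\mapsto 2n$; the identification $\mathcal{A}'(A^{(n)}\oplus B^{(n)})\cong\mathcal{A}'\big((A\oplus B)^{(n)}\big)\cong M_{n}\big(\mathcal{A}'(A\oplus B)\big)$ together with stability of $K_{0}$ and of the idempotent semigroup $\bigvee$ (taken over $M_{\infty}$) sends $[I_{M_{n}}]$ to $n[I]$, and cancellation in $\mathbb{N}$ then yields $[I]\mapsto 2$, which is exactly conditions (1)--(2) of Theorem \ref{220506.1} for $(A,B)$ with $k=1$, $n_{1}=1$, hence $A\sim_{s}B$. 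Two remarks. First, your closing paragraph misplaces the difficulty: the stability isomorphism $\bigvee(M_{n}(\mathcal{A}))\cong\bigvee(\mathcal{A})$ with $[I_{M_{n}}]\mapsto n[I]$ is a purely formal fact of algebraic $K$-theory and does not depend on $A,B$ being strongly irreducible; moreover, as phrased, the claim that the commutants ``contain no nontrivial idempotents'' is only true of $\mathcal{A}'(A)$ and $\mathcal{A}'(B)$ separately, not of $\mathcal{A}'(A\oplus B)$, which always contains the idempotents $I\oplus 0$ and $0\oplus I$; none of that structural analysis of idempotents in $M_{n}(\mathcal{A}'(A\oplus B))$ is actually needed. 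Second, the route in the cited source is usually presented via the uniqueness of the finite (SI) decomposition of Cowen--Douglas operators up to similarity (compare $A^{(n)}$ and $B^{(n)}$ as (SI) decompositions and match summands), which is the same $K_{0}$-machinery packaged differently; your argument is a clean formal corollary of the classification theorem as stated, at the small cost of implicitly assuming the classification theorem applies to the ampliations $A^{(n)},B^{(n)}\in\mathcal{B}_{nm}(\mathbb{D})$, which it does.
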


\section{Irreducibility of weighted block shift operators}

Let $T\in\mathcal{L}(\mathcal{H})$. Then $T$ is called a block shift if there is an orthogonal decomposition $\mathcal{H}=\oplus_{n\in I}W_{n}$ of $\mathcal{H}$ with non-trivial subspaces $W_{n},n\in I$, such that $T(W_{n})\subseteq W_{n+1}$ for all $n$ in $I$.
$T$ is said to be a bilateral or unilateral block shift if $I=\mathbb{Z},\mathbb{Z}^{+}$ or $\mathbb{Z}^{-}$. The subspaces $W_{n},n\in I$ are called the blocks of $T$.
When dim $W_{n}=1$, $T$ is called a shift operator.
In this case, if $W_{n}=span\{e_n\}$, then $\{e_n\}_{n\in I}$ is the orthonormal basis of $\mathcal{H}$.
A. Kor$\acute{a}$nyi and S. Hazra found three large classes irreducible bilateral block shifts in \cite{e,f}.
B. Bagchi and G. Misra proved that any irreducible homogeneous operator is a block shift in \cite{c}.
Motivated by these results, in this section we consider the irreducibility of certain block shifts of block size to be two.
Some sufficient conditions are given.

Before stating our results, we first specify some common notations.
Let $l^{2}(\mathbb{Z})=\{(a_{i})_{i\in\mathbb{Z}}:\sum_{i\in\mathbb{Z}}|a_{i}|^{2}<\infty\}$ and $T_{0},T_{1}$ on $l^{2}(\mathbb{Z})$ be bilateral weighted shifts with  non-zero weighted sequences $\{w_{n}\}_{n\in\mathbb{Z}}$ and $\{v_{n}\}_{n\in\mathbb{Z}}$ respect to orthonormal basis$\{e_n\}_{n\in \mathbb{Z}}$ of $l^{2}(\mathbb{Z})$.
Define $T(w_{n},v_{n},d_{n}):=\left(\begin{smallmatrix}
T_0 & XT_1-T_0X \\
0 & T_1\\
\end{smallmatrix}\right)$ acting on $\mathcal{H}=l^{2}(\mathbb{Z})\oplus l^{2}(\mathbb{Z})$, where $X$ satisfies $Xe_{n}=d_{n}e_{n}$ for a uniformly bounded sequence $\{d_n\}_{n\in\mathbb{Z}}$ and $XT_1\neq T_0X$.
When $w_{n}=\frac{1}{v_{n}}=t_{n}$, $T(w_{n},v_{n},d_{n})=T(t_{n},d_{n})$.
In these forms, let $\mathcal{B}_n=\{\left (\begin{smallmatrix}e_{n}\\0\\\end{smallmatrix}\right ),\left (\begin{smallmatrix}0\\e_{n}\\\end{smallmatrix}\right )\}$ be a orthonormal set and $H(n)=\mbox{span}\mathcal{B}_n$.
Write $T_n=T|_{H(n)}$, we can see $A_{n}:=T^{*}_{n}T_{n}$ and $B_{n}:=T_{n-1}T^{*}_{n-1}$ map $H(n)$ to $H(n)$.
With these notations, we have the following results.

\subsection{The irreducibility of $T(t_{n},d_{n})$}

\begin{prop}\label{220507.1}
Let $T=T(t_n,\alpha)$ be a bounded operator for non-zero complex number $\alpha$.
If
 \begin{enumerate}
 \item [(1)]
there exists $i_0\in\mathbb{Z}$ such that $t^{2}_{n}+\frac{1}{t^{2}_{n}}=t^{2}_{m}+\frac{1}{t^{2}_{m}}$ only holds when $m=-(n+i_0)$;
 \item [(2)]
$t_{n}>0$ and $t_n\neq\frac{1}{t_{n-1}}\neq1$,
\end{enumerate}
then $T$ is irreducible.
\end{prop}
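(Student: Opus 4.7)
When $d_n \equiv \alpha$ the operator $X$ is the scalar $\alpha I$, and
\[
T = \begin{pmatrix} T_0 & \alpha(T_1 - T_0) \\ 0 & T_1 \end{pmatrix}
\]
is a block shift of block size two: the restriction $T_n : H(n) \to H(n+1)$ has matrix $\bigl(\begin{smallmatrix} t_n & \alpha(t_n^{-1} - t_n) \\ 0 & t_n^{-1} \end{smallmatrix}\bigr)$ in the bases $\mathcal{B}_n, \mathcal{B}_{n+1}$, with $\det T_n = 1$. I would take an arbitrary self-adjoint projection $P \in \mathcal{A}'(T)$, write $P = (Q_{mn})$ block-wise with $Q_{mn} : H(n) \to H(m)$, and aim to prove $P \in \{0, I\}$ through block-diagonality, scalarity of each diagonal block, and propagation.

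\emph{Step 1 (block-diagonality from condition (1)).} Since $P$ commutes with $T$, it commutes with $T^*T = \bigoplus_n A_n$ and $TT^* = \bigoplus_n B_n$. A routine $2\times 2$ computation gives $\det A_n = \det B_n = 1$, while $\operatorname{tr}(A_n)$ depends only on $t_n^2 + t_n^{-2}$ and $\operatorname{tr}(B_n)$ only on $t_{n-1}^2 + t_{n-1}^{-2}$. Condition (1) then says that $\sigma(A_m) \cap \sigma(A_n) \neq \emptyset$ forces $m \in \{n, n^*\}$ with $n^* := -(n + i_0)$; analogously $\sigma(B_m) \cap \sigma(B_n) \neq \emptyset$ forces $m \in \{n, n^* + 2\}$. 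Applying the Sylvester--Rosenblum theorem to the two intertwinings $Q_{mn} A_n = A_m Q_{mn}$ and $Q_{mn} B_n = B_m Q_{mn}$ eliminates every $Q_{mn}$ with $m \neq n$, since the two allowed index sets intersect only at $m = n$. Hence $P = \bigoplus_n P_n$ with $P_n$ a $2 \times 2$ Hermitian projection on $H(n)$.

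\emph{Step 2 (scalar blocks from condition (2)).} Condition (2) yields $t_n \neq 1$ for every $n$, so $\operatorname{tr}(A_n) > 2$ and, together with $\det A_n = 1$, $A_n$ has two distinct positive eigenvalues (and likewise for $B_n$). Expanding the $(1,2)$-entry of $[A_n, B_n]$ and factoring, I expect the clean identity
\[
[A_n, B_n]_{12} = -\alpha(1 + |\alpha|^2) \cdot \frac{(t_n^2 - 1)(t_{n-1}^2 - 1)(1 - t_n^2 t_{n-1}^2)}{t_n^2 \, t_{n-1}^2},
\]
in which every factor is nonzero precisely because condition (2) forbids $t_n = 1$, $t_{n-1} = 1$, and $t_n t_{n-1} = 1$. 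So $A_n$ and $B_n$ are non-commuting Hermitian $2 \times 2$ matrices with simple spectra, whence the only matrices commuting with both are scalars, and $P_n = c_n I_{H(n)}$.

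\emph{Step 3 and main obstacle.} The recursion $P_{n+1} T_n = T_n P_n$ becomes $c_{n+1} T_n = c_n T_n$, so $c_n \equiv c$ is a single constant; then $P = cI$ is a projection, forcing $c \in \{0, 1\}$, and $T$ is irreducible. The most delicate step is Step 1, where I must combine the $A$- and $B$-intertwinings and handle the degenerate case $n = n^*$ (possible only when $i_0$ is even) separately; the other computational heart of the argument is the factorization in Step 2, which reveals condition (2) as exactly the right hypothesis to rule out a common eigenvector of $A_n$ and $B_n$.
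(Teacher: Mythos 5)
Your argument is correct, but it runs on the commutant side, whereas the paper argues on the subspace side, and the two routes are worth contrasting. The paper fixes a nonzero reducing subspace $\mathcal{K}$, applies the spectral theorem to $T^{*}T=\bigoplus_n A_n$ and $TT^{*}=\bigoplus_n B_n$ to restrict vectors of $\mathcal{K}$ to eigenspaces (condition (1) controlling the multiplicities, with separate bookkeeping for $i_0$ odd and even), pushes these pieces around with powers of $T$ and $T^{*}$ to reach every block $H(n)$, and finally uses $A_nB_n\neq B_nA_n$ --- the same commutator computation as yours --- to show the eigenbases of $A_n$ and $B_n$ are transverse, forcing $H(n)\subset\mathcal{K}$. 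You instead take a self-adjoint projection $P\in\mathcal{A}'(T)$, kill the off-diagonal blocks $Q_{mn}$, $m\neq n$, by spectral disjointness: the index sets permitted by the $A$- and $B$-intertwinings, $\{n,n^{*}\}$ and $\{n,n^{*}+2\}$ with $n^{*}=-(n+i_0)$, meet only in $\{n\}$; then the non-commuting simple-spectrum pair $A_n,B_n$ forces each diagonal block to be scalar, and $P_{n+1}T_n=T_nP_n$ propagates the scalar, giving $P\in\{0,I\}$. Your factorization of $[A_n,B_n]_{12}$ is consistent with the paper's unfactored expression $\bar\alpha(1+|\alpha|^{2})\big((t_n^{2}-1)(t_{n-1}^{2}-1)-(t_n^{-2}-1)(t_{n-1}^{-2}-1)\big)$ (they coincide up to the conjugate on $\alpha$, which is immaterial), and condition (2) is exactly what keeps each factor nonzero. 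One small remark: the degenerate cases you set aside ($n^{*}=n$, or $n^{*}+2=n$, possible only for even $i_0$) need no separate treatment --- they merely shrink the allowed index sets, so the intersection is still $\{n\}$; in this respect your commutant formulation is the more uniform of the two, avoiding the paper's odd/even case split, while both proofs consume hypotheses (1) and (2) in exactly the same way.
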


\begin{proof}
Let $\mathcal{K}$ be an arbitrary non-zero reduced subspace of $T$. We will check that $\mathcal{K}=\mathcal{H}=span\{\left (\begin{smallmatrix}e_{n}\\0\\\end{smallmatrix}\right ),\left (\begin{smallmatrix}0\\e_{n}\\\end{smallmatrix}\right ),n\in\mathbb{Z}\}$.
Since $\mathcal{K}$ is a closed linear subspace and $H(n)=\mbox{span}\{\left (\begin{smallmatrix}e_{n}\\0\\\end{smallmatrix}\right ),\left (\begin{smallmatrix}0\\e_{n}\\\end{smallmatrix}\right )\}$, if we prove that for each $n$, $H (n)\subset\mathcal{K}$, then the theorem is proven.
In the following, we will complete the proof in three steps.

\textbf{Step 1}: There exists $n_0\in\mathbb{Z}$ such that $\mathcal{K}\cap H(n_0)\neq\emptyset$.

If $\mathcal{K}$ is a reducing subspace of $T$, then $T^{*}T\mathcal{K}\subset T^{*}\mathcal{K}\subset\mathcal{K}, T^{*}T\mathcal{K}^{\perp}\subset T^{*}\mathcal{K}^{\perp}\subset\mathcal{K}^{\perp}$, $\mathcal{K}$ is an invariant subspace of both $T^{*}T$ and $TT^{*}$.
From the spectral theorem of positive operators, we can conclude that the restrictions of elements in $\mathcal{K}$ on the eigenspaces of $T^*T$ and $TT^*$ still belong to $\mathcal{K}$.
Next, we will calculate the eigenspaces of $T^*T$ and $TT^*$ by rearranging the orthonormal basis of $\mathcal{H}$ and partitioning the operators.

It is easy to see that $T$ maps $H(n)$ to $H(n+1)$ and
$T_n=\left(\begin{smallmatrix}t_{n}&\alpha(\frac{1}{t_{n}}-t_{n})\\0&\frac{1}{t_{n}}\\\end{smallmatrix}\right )$.
It follows that
$$A_{n}
=\left(\begin{smallmatrix}t^{2}_{n}&\alpha(1-t^{2}_{n})\\ \bar{\alpha}(1-t^{2}_{n})&|\alpha|^{2}(\frac{1}{t_{n}}-t_{n})^{2}+\frac{1}{t^{2}_{n}}\\\end{smallmatrix}\right ),
B_{n}
=\left(\begin{smallmatrix}t^{2}_{n-1}+|\alpha|^{2}(\frac{1}{t_{n-1}}-t_{n-1})^2&\alpha(\frac{1}{t^{2}_{n-1}}-1)\\
\bar{\alpha}(\frac{1}{t^{2}_{n-1}}-1)&\frac{1}{t^{2}_{n-1}}\\\end{smallmatrix}\right ).$$
We have $A_{n}\neq I$ and its determinant is $1$.
Since the spectrum of positive operators is not-negative, the eigenvalues of $A_{n}$ must be real numbers.
So there exists a real number $\lambda_{n}>1$ such that the eigenvalues of $A_{n}$ are $\lambda^{2}_{n}$ and $\frac{1}{\lambda^{2}_{n}}$.
Note that $\mbox{trace}A_{n}=(|\alpha|^{2}+1)(t^{2}_{n}+\frac{1}{t^{2}_{n}})-2|\alpha|^{2}=\lambda^{2}_{n}+\frac{1}{\lambda^{2}_{n}}$.
By using the condition $t^{2}_{n}+\frac{1}{t^{2}_{n}}=t^{2}_{-(n+i_0)}+\frac{1}{t^{2}_{-(n+i_0)}}$ for some $i_0\in\mathbb{Z}$.
This is equivalent to
$\lambda^{2}_{n}+\frac{1}{\lambda^{2}_{n}}=\lambda^{2}_{-(n+i_0)}+\frac{1}{\lambda^{2}_{-(n+i_0)}}$ for some $i_0\in\mathbb{Z}$.
Since $\lambda_{n}>1$ for any $n\in\mathbb{Z}$, that is $\lambda_{n}=\lambda_{-(n+i_0)}$ for some $i_0\in\mathbb{Z}$.
Let $\lambda^{(1)}_{n}=\lambda^{2}_{n}$ and $\lambda^{(2)}_{n}=\frac{1}{\lambda^{2}_{n}}$.
Suppose that $\{v^{(1)}_{n},v^{(2)}_{n}\}$ is the orthonormal basis which makes $A_{n}$ diagonal.
Let $u^{(k)}_{n}=T_{n-1}v^{(k)}_{n-1}$. That means $B_{n}u^{(k)}_{n}=T_{n-1}T^{*}_{n-1} T_{n-1}v^{(k)}_{n-1}=\lambda^{(k)}_{n-1}T_{n-1}v^{(k)}_{n-1}=\lambda^{(k)}_{n-1}u^{(k)}_{n}$.
Since the orthogonality of $v^{(1)}_{n}$ and $v^{(2)}_{n}$, $u^{(1)}_{n}$ and $u^{(2)}_{n}$ are also,
$\langle u^{(1)}_{n}, u^{(2)}_{n}\rangle=\langle T^{*}_{n-1}T_{n-1}v^{(1)}_{n-1}, v^{(2)}_{n-1}\rangle=\langle\lambda_{n-1}v^{(1)}_{n-1}, v^{(2)}_{n-1}\rangle=0$.
Clearly, $\{u^{(1)}_{n}, u^{(2)}_{n}\}$ is an orthonromal basis of $H(n)$ which makes $B_{n}$ diagonal.

I will complete the proof of Step 1 in two cases.

\textbf{Case 1}: $i_0$ is an odd number. Then the multiplicity of every eigenvalues of $T^{*}T$ is two.
Let $\mathcal{A}_{n,k}=\mbox{span}\{v^{(k)}_{n}, v^{(k)}_{-(n+i_0)}\},\ k=1,2$ be the eigenspace of $T^{*}T$ corresponds to the eigenvalue $\lambda_{n}^{(k)}=\lambda_{-(n+i_0)}^{(k)}$. Then $\mathcal{H}=\bigoplus_{k=1,2;n\geq\frac{1-i_0}{2}}\mathcal{A}_{n,k}$.
For a fixed but arbitrary a non-zero vector $f\in\mathcal{K}$, we have $f=\sum_{k=1,2;n\geq\frac{1-i_0}{2}}\alpha_{n,k}h_{n,k}$, where $h_{n,k}\in\mathcal{A}_{n,k}$.
Since $f$ is non-zero, there exist $n,k$ such that $\alpha_{n,k}h_{n,k}\neq0$. It implies that $h_{n,k}\neq0$ and  $h_{n,k}\in\mathcal{K}$, since $h_{n,k}$ is the restriction of $f$ on the eigenspace $\mathcal{A}_{n,k}$.
Then there exist $\gamma,\delta$ that are not all zero such that $h_{n,k}=\gamma v^{(k)}_{n}+\delta v^{(k)}_{-(n+i_0)}$, where $v^{(k)}_{n}\in H(n)$ and $v^{(k)}_{-(n+i_0)}\in H(-(n+i_0))$.
Furthermore, for any $m\in\mathbb{Z}$, we have $T^{m}h_{n,k}=\hat{\delta}\hat{h}_{m-n-i_0}+\hat{\gamma}\hat{h}_{m+n}\in\mathcal{K}$, where $\hat{h}_{m-n-i_0}\in H(m-n-i_0), \hat{h}_{m+n}\in H(m+n)$.
From the arbitrariness of $m$, we can assume that $m$ is sufficiently large, up to $m-n-i_0\geq\frac{1-i_0}{2}, m+n\geq\frac{1-i_0}{2}$.
In this case, the eigenvalues of the $T^*T$ restricted to the spaces $H(m-n-i_0)$ and $H(m+n)$ must not be the same.
Then there exist $\delta_{1},\delta_{2}, \gamma_{1}, \gamma_{2}$ that are not all zero such that  $T^{m}h_{n,k}=\hat{\delta}\delta_{1}v^{(1)}_{m-n-i_0}+\hat{\delta}\delta_{2}v^{(2)}_{m-n-i_0}+\hat{\gamma}\gamma_{1}v^{(1)}_{m+n}+\hat{\gamma}\gamma_{2}v^{(2)}_{m+n}$.
Since every operator $A_n=T_n^*T_n$ is invertible and $h_{n,k}\neq0$, so is $T^{m}h_{n,k}\neq0$.
We know that $v^{(1)}_{m-n-i_0}, v^{(2)}_{m-n-i_0}, v^{(1)}_{m+n}, v^{(2)}_{m+n}$ are the eigenvectors corresponding to different eigenvalues of $T^{*}T$.
Therefore at least one of the coefficients of the vectors $v^{(1)}_{m-n-i_0}, v^{(2)}_{m-n-i_0}, v^{(1)}_{m+n}, v^{(2)}_{m+n}$ is not zero. This implies that at least one of the vectors $v^{(1)}_{m-n-i_0}, v^{(2)}_{m-n-i_0}, v^{(1)}_{m+n}, v^{(2)}_{m+n}$ belongs to $\mathcal{K}$. It follows that $\mathcal{K}\cap H(n_0)\neq\emptyset$ for some $n_0$.

\textbf{Case 2}: $i_0$ is an even number. Then the multiplicity of the eigenvalue $\lambda_{-\frac{i_0}{2}}$ of $T^{*}T$ is one, the multiplicity of other eigenvalues is two.
Let $\mathcal{H}=\bigoplus_{k=1,2; n\geq-\frac{i_0}{2}}\mathcal{A}_{n,k}$, where  $\mathcal{A}_{-\frac{i_0}{2},k}=\mbox{span}\{v^{(k)}_{-\frac{i_0}{2}}\}$ is the eigenspace of $T^{*}T$ with the eigenvalue $\lambda_{-\frac{i_0}{2}}$
and $\mathcal{A}_{n,k}=\mbox{span}\{v^{(k)}_{n}, v^{(k)}_{-(n+i_0)}\}$ is the eigenspace of $T^{*}T$ with the eigenvalue $\lambda_{n}=\lambda_{-(n+i_0)}$ for $n>-\frac{i_0}{2}$.
For any non-zero vector $f$ in $\mathcal{K}$, we have $f=\sum_{k=1,2; n\geq-\frac{i_0}{2}}\alpha_{n,k}h_{n,k}$, where $h_{n,k}\in\mathcal{A}_{n,k}$.
Similarly, there exist $n\geq-\frac{i_0}{2}$ and $k=1$ or $2$ such that $\alpha_{n,k}h_{n,k}\neq0$.
If $n=-\frac{i_0}{2}$, then $\mathcal{K}\cap H(-\frac{i_0}{2})\neq\emptyset$. The proof of other situations is similar to Case 1.

To sum up, these ensure the existence of $n_0\in\mathbb{Z}$ such that $\mathcal{K}\cap H(n_0)\neq\emptyset$. This completes the proof of step 1.

\textbf{Step 2}: For any $n\in\mathbb{Z}$, $\mathcal{K}\cap H(n)\neq\emptyset$.

From Step 1, we know that $\mathcal{K}\cap H(n_0)\neq\emptyset$ for some $n_0\in\mathbb{Z}$.
Since $T_n$ is invertible for any $n\in\mathbb{Z}$, if $g\in\mathcal{H}$ is a non-zero function, so is $Tg$.
By using $\mathcal{K}$ is a reduced subspace and applying $T^m$ and $T^{*m}$ to $H(n_{0})\cap\mathcal{K}$ for all $m\in\mathbb{N}$,
we have that $\mathcal{K}\cap H(n_0\pm m)\neq\emptyset$.
By the arbitrariness of $m$, we have completed the proof of this step.

\textbf{Step 3}: $\mathcal{K}=\mathcal{H}$.

Let $h_{n}\in\mathcal{K}\cap H(n)$ and $h_{n}\neq0$. Then there are $\alpha$ and $\beta$ that are not all zero, as well as $\gamma$ and $\delta$ that make $h_{n}=\alpha v^{(1)}_{n}+\beta v^{(2)}_{n}=\gamma u^{(1)}_{n}+\delta u^{(2)}_{n}$.

\textbf{Claim}: At least one of $\alpha\beta$ and $\gamma\delta$ is not zero.

Otherwise, if both $\alpha\beta$ and $\gamma\delta$ are zero, then there is only one of $\alpha$ and $\beta$ is zero, and $\gamma$ and $\delta$ are also. That is to say,
there is a multiple relationship between $\{v^{(1)}_{n}, v^{(2)}_{n}\}$ and $\{u^{(1)}_{n}, u^{(2)}_{n}\}$.
Without loss of generality, we suppose that there exists non-zero $c\in\mathbb{C}$ such that $u^{(1)}_{n}=cv^{(1)}_{n}$.
Then we can find $c_{1}, c_{2}\in\mathbb{C}$ that are not all zero and satisfy $u^{(2)}_{n}=c_{1}v^{(1)}_{n}+c_{2}v^{(2)}_{n}$.
Taking the inner product of $v^{(1)}_{n}$ with $u^{(2)}_{n}$, we obtain
$\langle u^{(2)}_{n}, v^{(1)}_{n}\rangle=\langle c_{1}v^{(1)}_{n}+c_{2}v^{(2)}_{n}, v^{(1)}_{n}\rangle=c_{1}\| v^{(1)}_{n}\|^{2}.$
On the other hand, $\langle u^{(2)}_{n}, v^{(1)}_{n}\rangle=\langle u^{(2)}_{n}, \frac{1}{c}u^{(1)}_{n}\rangle=0$.
So $c_{1}=0$ and $u^{(2)}_{n}=c_{2}v^{(2)}_{n}$. In this case, we infer that
$(A_{n}B_{n}-B_{n}A_{n})u^{(1)}_{n}=c\lambda^{(1)}_{n-1}A_{n}v^{(1)}_{n}-c\lambda^{(1)}_{n}B_{n}v^{(1)}_{n}=c\lambda^{(1)}_{n-1}\lambda^{(1)}_{n}v^{(1)}_{n}-\lambda^{(1)}_{n}B_{n}u^{(1)}_{n}=0.$
Similarly, after replacing $u^{(1)}_{n}$ with $u^{(2)}_{n}$, $v^{(1)}_{n}$ and $v^{(2)}_{n}$ are all zero, respectively.
Thus $A_{n}B_{n}-B_{n}A_{n}=0$.

By a routine computation, we obtain
\begin{align*}
& \quad\,\
A_{n}B_{n}=(B_{n}A_{n})^*\\
&=\left(\begin{smallmatrix}t_n^2t_{n-1}^2+|\alpha|^2\big(t_n^2(\frac{1}{t_{n-1}}-t_{n-1})^2+(1-t_n^2)(\frac{1}{t_{n-1}^2}-1)\big)&\alpha\big(\frac{1}{t_{n-1}^2}-t_n^2\big)\\
\bar{\alpha}(1+|\alpha|^2)\big(t_{n-1}^2-t_{n}^2t_{n-1}^2+\frac{1}{t_{n}^2t_{n-1}^2}-\frac{1}{t_{n}^2}\big)+\bar{\alpha}|\alpha|^2\big(t_{n}^2-\frac{1}{t_{n-1}^2}\big)&|\alpha|^2\big((\frac{1}{t_{n-1}^2}-1)(1-t_{n}^2)+\frac{1}{t_{n-1}^2}(\frac{1}{t_{n}}-t_{n})^2\big)+\frac{1}{t_{n}^2t_{n-1}^2}\\\end{smallmatrix}\right),
\end{align*}
then the (1,2)-entry and (2,1)-entry in $A_{n}B_{n}-B_{n}A_{n}$ are  $\bar{\alpha}(1+|\alpha|^2)\big((t^{2}_{n}-1)(t^{2}_{n-1}-1)-(\frac{1}{t^{2}_{n}}-1)(\frac{1}{t^{2}_{n-1}}-1)\big)$, and other entries are zero.
Since $\alpha\neq0$, $A_{n}B_{n}-B_{n}A_{n}=0$ if and only if
$(t^{2}_{n}-1)(t^{2}_{n-1}-1)=(\frac{1}{t^{2}_{n}}-1)(\frac{1}{t^{2}_{n-1}}-1)=\frac{1}{t^{2}_{n}t^{2}_{n-1}}(1-t^{2}_{n})(1-t^{2}_{n-1})$.
Based on condition (2), this is impossible. So our assumption is invalid.
This implies that the Claim is checked.

If $\alpha\beta\neq0$, we can obtain $v^{(1)}_{n}\in\mathcal{K}\cap H(n)$ by restricting $h_{n}$ to the eigensubspace corresponding to the eigenvalue $\lambda^{(1)}_{n}$ of $T^{*}T$.
In the same way, we can also obtain $v^{(2)}_{n}\in\mathcal{K}\cap H(n)$. So $v^{(1)}_{n}, v^{(2)}_{n}\in\mathcal{K}$. Similarly,  $\gamma\delta\neq0$ means that $u^{(1)}_{n}, u^{(2)}_{n}\in\mathcal{K}$.
Since both $\{v^{(1)}_{n},v^{(2)}_{n}\}$ and $\{u^{(1)}_{n},u^{(2)}_{n}\}$ are orthonormal bases of $H(n)$, we conclude $H(n)\subset\mathcal{K}$.
It follows from Step 2 that $\mathcal{K}=span H(n)=\mathcal{H}$.

Hence, $T$ is irreducible.
\end{proof}

\begin{rem}
Let $T=T(t_n,\alpha)$ and $\tilde{T}=T(\tilde{t}_n,\tilde{\alpha})$. Suppose that $T,\tilde{T}$ satisfy the conditions of Proposition \ref{220507.1},
that is, they satisfy $\lambda_{n}=\lambda_{-(n+i_0)}$ and $\tilde{\lambda}_{n}=\tilde{\lambda}_{-(n+\tilde{i}_0)}$, respectively.
Recall that if $i_0$ is odd, then the multiplicity of the eigenvalues is two. If $i_0$ is an even number, then the multiplicity of the eigenvalue $\lambda_{-\frac{i_0}{2}}$ is one and the multiplicity of other eigenvalues is two. These also apply to $\tilde{\lambda}_{n}$.
We know that $T\sim_s\tilde{T}$ if and only if singular values and their multiplicities of $T$ and $\tilde{T}$ are the same.
For $i_0$ and $\tilde{i}_0$, if one of them is odd and the other is even, $T$ and $\tilde{T}$ must not be similar.
In the following, we will only consider when they are odd numbers.

Then the necessary and sufficient condition of $T\sim_s\tilde{T}$ is
$\left\{\lambda_{n},\frac{1}{\lambda_{n}}\right\}_{n\geq\frac{1-i_0}{2}}=\left\{\tilde\lambda_{n},\frac{1}{\tilde\lambda_{n}}\right\}_{n\geq \frac{1-\tilde{i}_0}{2}}$.
This is equivalent to $\{\lambda_{n}\}_{n\geq\frac{1-i_0}{2}}=\{\tilde\lambda_{n}\}_{n\geq \frac{1-\tilde{i}_0}{2}}$ or
$\left\{\lambda^{2}_{n}+\frac{1}{\lambda^{2}_{n}}\right\}_{n\geq\frac{1-i_0}{2}}=\left\{\tilde\lambda^{2}_{n}+\frac{1}{\tilde\lambda^{2}_{n}}\right\}_{n\geq \frac{1-\tilde{i}_0}{2}}$. That is,
$\left\{(|\alpha|^{2}+1)\left(t_{n}^{2}+\frac{1}{t^{2}_{n}}\right)-2|\alpha|^{2}\right\}_{n\geq\frac{1-i_0}{2}}=\left\{(|\tilde\alpha|^{2}+1)\left(\tilde t_{n}^{2}+\frac{1}{\tilde t^{2}_{n}}\right)-2|\tilde\alpha|^{2}\right\}_{n\geq \frac{1-\tilde{i}_0}{2}}$.
The set composed of singular values is the completely similar invariant of operator $T(t_n,\alpha)$.
Moreover, when $|\alpha|=|\tilde\alpha|$, $T\sim_s\tilde{T}$ if and only if
$\{t_{n}^{2}+\frac{1}{t^{2}_{n}}\}_{n\geq\frac{1-i_0}{2}}=\{\tilde t^{2}_{n}+\frac{1}{\tilde t^{2}_{n}}\}_{n\geq \frac{1-\tilde{i}_0}{2}}$.
\end{rem}

For Proposition \ref{220507.1}, we give an example.
\begin{ex}
Given two non-zero sequences $\{x_{n}\}_{n\in\mathbb{Z}}$ and $\{y_{n}\}_{n\in\mathbb{Z}}$.
Let $T=T(\sqrt{|\frac{x_{n}}{y_{n}}|},\alpha)$ for nonzero $\alpha\in\mathbb{C}$.
Suppose that $x_ny_{n-1}\neq x_{n-1}y_n$ and $x_n\neq y_n$.
If $x_{n}=-y_{-(n+1)}$ for all $n\in\mathbb{Z}$, then $T$ is irreducible.
\end{ex}

\begin{proof}
Since $x_{n}=-y_{-(n+1)}$, then $x_{-(n+1)}=-y_{-(-n-1+1)}=-y_{n}$, $y_{n}=-x_{-(n+1)}$. We notice that $\frac{x_{n}}{y_{n}}=\frac{-y_{-(n+1)}}{-x_{-(n+1)}}=\frac{y_{-(n+1)}}{x_{-(n+1)}}$. Then $\frac{x_{n}}{y_{n}}+\frac{y_{n}}{x_{n}}=\frac{x_{-(n+1)}}{y_{-(n+1)}}+\frac{y_{-(n+1)}}{x_{-(n+1)}}$.

Let $t^{2}_{n}=|\frac{x_{n}}{y_{n}}|$.
Then we need to show that $t_{n}$ satisfies the conditions of Proposition \ref{220507.1}.
By a simple calculation, it follows that
$$t^{2}_{n}+\frac{1}{t^{2}_{n}}=\left|\frac{x_{n}}{y_{n}}\right|+\left|\frac{y_{n}}{x_{n}}\right|=\left|\frac{x_{-(n+1)}}{y_{-(n+1)}}\right|+\left|\frac{y_{-(n+1)}}{x_{-(n+1)}}\right|=t^{2}_{-(n+1)}+\frac{1}{t^{2}_{-(n+1)}}.$$
By assumptions of sequences $\{x_{n}\}_{n\in\mathbb{Z}}$ and $\{y_{n}\}_{n\in\mathbb{Z}}$, we have $t_n\neq t_{n-1}$ and $t_n\neq1$.
This implies that $T$ is irreducible.
\end{proof}

When $x_{n}=n+a,y_{n}=n+b$ and $0<a\neq b<1$, $a+b=1$,
the above example corresponds to A. Kor$\acute{a}$nyi's Theorem 2.2 in \cite{e}. Our results formally generalize his example from the perspective of irreducibility, but the operators we discuss do not have homogeneity, even weak homogeneity.

In the following, we consider the case where the weight sequence of the operator is complex, and we also make some generalization of the operator structure.

\begin{thm}\label{220507.2}
Let $T=T(t_n,d_n)$.
Suppose that there exists a bijective mapping $g$ on $\mathbb{Z}$ and $N\in\mathbb{Z}$ such that when $n>N$,
the sets of singular values of $A_n$ are distinct from each other.
If $trace\,A_n=trace\,A_{g(n)}$ 
and one of the following two conditions is true,
 \begin{enumerate}
 \item [(1)]
$\frac{\overline{t_{n}}}{t_{n-1}}c_{n}\overline{c_{n-1}}\notin\mathbb{R}$, where $c_{n}=\frac{d_{n+1}}{t_{n}}-d_{n}t_{n}$;
 \item [(2)]
$t_{n}\overline{ c_{n}}(|t_{n-1}|^{2}+\frac{1}{|t_{n-1}|^{2}}+|c_{n-1}|^{2})\neq\frac{1}{t_{n-1}}\overline{ c_{n-1}}(|t_{n}|^{2}-\frac{1}{|t_{n}|^{2}}-|c_{n}|^{2})$,
\end{enumerate}
then T is irreducible.
\end{thm}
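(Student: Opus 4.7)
The plan is to follow the three-step scheme used in the proof of Proposition \ref{220507.1}. Let $\mathcal{K}$ be an arbitrary non-zero reducing subspace of $T$; the goal is to show $\mathcal{K}=\mathcal{H}$ by establishing $H(n)\subset\mathcal{K}$ for every $n\in\mathbb{Z}$. A short direct computation with $T_n=\left(\begin{smallmatrix} t_n & c_n \\ 0 & 1/t_n\end{smallmatrix}\right)$ yields $\det A_n=1$, so the hypothesis $\operatorname{trace} A_n=\operatorname{trace} A_{g(n)}$ immediately forces $A_n$ and $A_{g(n)}$ to share eigenvalues. Combined with the distinctness-of-singular-values hypothesis for $n>N$, this means each such $A_n$ splits $H(n)$ into two one-dimensional eigenspaces, and the multiplicity structure of the spectrum of $T^*T$ is controlled by the orbits of $g$.

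\textbf{Step 1 (catching a block).} Since $\mathcal{K}$ is invariant under $T^*T$ (and $TT^*$), the restriction of any non-zero $f\in\mathcal{K}$ to each joint eigenspace again lies in $\mathcal{K}$. Picking such an $f$ and applying a sufficiently high power $T^m$ (which is non-annihilating because $\det T_n=1$ makes every $T_n$ invertible) shifts all active blocks of $f$ past index $N$, where the eigenspaces of the relevant $A_k$ are one-dimensional in $H(k)$. Projecting $T^mf$ onto such an eigenspace produces a non-zero eigenvector of $T^*T$ living in a single block, which gives $\mathcal{K}\cap H(n_0)\ne\emptyset$ for some $n_0$.

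\textbf{Step 2 (spreading).} Invertibility of each $T_n$ together with the $T$- and $T^*$-invariance of $\mathcal{K}$ carries $\mathcal{K}\cap H(n_0)\ne\emptyset$ to $\mathcal{K}\cap H(n)\ne\emptyset$ for every $n\in\mathbb{Z}$, by applying $T^j$ and $T^{*j}$ as in Proposition \ref{220507.1}.

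\textbf{Step 3 (filling each block).} Choose $h_n\in\mathcal{K}\cap H(n)\setminus\{0\}$ and expand it in the $A_n$-eigenbasis $\{v_n^{(1)},v_n^{(2)}\}$ and in the $B_n$-eigenbasis $\{u_n^{(1)},u_n^{(2)}\}$. If either expansion has both coefficients non-zero, projecting onto the corresponding distinct eigenspaces of $T^*T$ (respectively $TT^*$) places both basis vectors inside $\mathcal{K}$, so $H(n)\subset\mathcal{K}$. Otherwise the two eigenbases must be pairwise proportional, and, by the same algebraic argument as in Proposition \ref{220507.1}, this implies $A_nB_n-B_nA_n=0$.

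The main obstacle is to close Step 3 under the present hypotheses. With complex weights $t_n$ and shear constants $c_n=d_{n+1}/t_n-d_nt_n$, the matrix $A_nB_n-B_nA_n$ is considerably more intricate than in Proposition \ref{220507.1}: each entry mixes $t_n,t_{n-1},c_n,\overline{c_{n-1}}$ in a non-trivial way, and two new phenomena appear, namely the possible reality/non-reality of mixed products like $\frac{\overline{t_n}}{t_{n-1}}c_n\overline{c_{n-1}}$, and a more elaborate interplay between diagonal and off-diagonal entries. The technical task will be to expand the commutator entry by entry, isolate a specific off-diagonal entry whose imaginary part is exactly $\operatorname{Im}\bigl(\tfrac{\overline{t_n}}{t_{n-1}}c_n\overline{c_{n-1}}\bigr)$ up to a non-zero factor, showing that hypothesis (1) obstructs commutation, and to extract an independent relation whose vanishing is precisely the equality forbidden by hypothesis (2). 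Either obstruction contradicts $A_nB_n=B_nA_n$, forcing one of the two expansions of $h_n$ to have both coefficients non-zero and thus $H(n)\subset\mathcal{K}$. Together with Step 2 this gives $\mathcal{K}=\mathcal{H}$, so $T$ is irreducible.
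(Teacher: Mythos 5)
Your proposal is correct and takes essentially the same route as the paper: reduce, via the three-step scheme of Proposition \ref{220507.1}, to showing $A_nB_n\neq B_nA_n$, and then check that hypotheses (1) and (2) force two entries of the commutator $A_nB_n-B_nA_n$ to be non-zero. The only slight misprediction is that the quantity governed by hypothesis (1) is the \emph{diagonal} $(1,1)$-entry, which equals $\frac{\overline{t_{n}}}{t_{n-1}}c_{n}\overline{c_{n-1}}-\frac{t_{n}}{\overline{t_{n-1}}}\overline{c_{n}}c_{n-1}=2i\,\mathrm{Im}\bigl(\frac{\overline{t_{n}}}{t_{n-1}}c_{n}\overline{c_{n-1}}\bigr)$, while hypothesis (2) controls the $(2,1)$-entry, exactly as in the paper's routine computation.
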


\begin{proof}
Since $T_{n}=\left(\begin{smallmatrix}t_{n}&\frac{d_{n+1}}{t_{n}}-d_{n}t_{n}\\0&\frac{1}{t_{n}}\\\end{smallmatrix}\right)$,
\begin{align*}A_{n}&=T^{*}_{n}T_{n}
=\left(\begin{smallmatrix}|t_{n}|^{2}&\frac{d_{n+1}\overline{ t_{n}}}{t_{n}}-d_{n}|t_{n}|^{2}\\\frac{\overline{d_{n+1}}t_{n}}{\overline{t_{n}}}-\overline{ d_{n}}|t_{n}|^{2}&|\frac{d_{n+1}}{t_{n}}-d_{n}t_{n}|^{2}+\frac{1}{|t_{n}|^{2}}\\\end{smallmatrix}\right),\\
B_{n}&=T_{n-1}T^{*}_{n-1}
=\left(\begin{smallmatrix}|t_{n-1}|^{2}+|\frac{d_{n}}{t_{n-1}}-d_{n-1}t_{n-1}|^{2}&\frac{d_{n}}{|t_{n-1}|^{2}}-\frac{d_{n-1}t_{n-1}}{\overline{ t_{n-1}}}\\\frac{\overline{d_{n}}}{|t_{n-1}|^{2}}-\frac{\overline{d_{n-1}}\overline{ t_{n-1}}}{t_{n-1}}&\frac{1}{|t_{n-1}|^{2}}\\\end{smallmatrix}\right).\end{align*}
We have $|A_{n}|=1$ and $A_{n}\neq I$. Then we assume the eigenvalues of $A_{n}$ are $\lambda^{2}_{n}, \frac{1}{\lambda^{2}_{n}}$, where $\lambda_{n}>1$.
That means
$\mbox{trace}A_{n}=|t_{n}|^{2}+\frac{1}{|t_{n}|^{2}}+|\frac{d_{n+1}}{t_{n}}-d_{n}t_{n}|^{2}=\lambda^{2}_{n}+\frac{1}{\lambda^{2}_{n}}.$
From the conditions, we have $\lambda^{2}_{n}+\frac{1}{\lambda^{2}_{n}}=\lambda^{2}_{g(n)}+\frac{1}{\lambda^{2}_{g(n)}}$, equivalently, $\lambda_{n}=\lambda_{g(n)}$.

Similar to the proof of Proposition \ref{220507.1}, we only need to verify that $A_{n}B_{n}\neq B_{n}A_{n}$ to prove that $T$ is an irreducible operator.

Let $A_{n}B_{n}=\left(\begin{smallmatrix}X^{(n)}_{11}&X^{(n)}_{12}\\X^{(n)}_{21}&X^{(n)}_{22}\\\end{smallmatrix}\right)$.
By a routine computation, we obtain
\begin{tiny}
\begin{equation*}
X^{(n)}_{11}=|t_{n}|^{2}|t_{n_1}|^{2}+|t_{n}|^{2}\left|\frac{d_{n}}{t_{n-1}}-d_{n-1}t_{n-1}\right|^{2}+\frac{\overline{ t_{n}}}{t_{n-1}}\left(\frac{d_{n+1}}{t_{n}}-d_{nt_{n}}\right)\left(\frac{\overline{ d_{n}}}{\overline{t_{n-1}}}-\overline{d_{n-1}}\overline{t_{n-1}}\right),
\end{equation*}
\end{tiny}
\begin{tiny}
\begin{equation*}
X^{(n)}_{12}=\frac{|t_{n}|^{2}}{\overline{ t_{n-1}}}\left(\frac{d_{n}}{t_{n-1}}-d_{n-1}t_{n-1}\right)+\frac{\overline{ t_{n}}}{|t_{n-1}|^{2}}\left(\frac{d_{n+1}}{t_{n}}-d_{n}t_{n}\right),
\end{equation*}
\end{tiny}
\begin{tiny}
\begin{equation*}
X^{(n)}_{21}=\left(\frac{\overline{d_{n+1}}t_{n}}{\overline{t_{n}}}-\overline{ d_{n}}|t_{n}|^{2}\right)\left(|t_{n-1}|^{2}+\left|\frac{d_{n}}{t_{n-1}}-d_{n-1}t_{n-1}\right|^{2}\right)
+\left(\left|\frac{d_{n+1}}{t_{n}}-d_{n}t_{n}\right|^{2}+\frac{1}{|t_{n}|^{2}}\right)\left(\frac{\overline{ d_{n}}}{|t_{n-1}|^{2}}-\frac{\overline{d_{n-1}}\overline{t_{n-1}}}{t_{n-1}}\right).
\end{equation*}
\end{tiny}

Then for the $(1,1)$-entry of the matrix $A_{n}B_{n}-B_{n}A_{n}=\left(\begin{smallmatrix}X^{(n)}_{11}-\overline{X^{(n)}_{11}}&X^{(n)}_{12}-\overline{ X^{(n)}_{21}}\\X^{(n)}_{21}-\overline{X^{(n)}_{12}}&X^{(n)}_{22}-\overline{ X^{(n)}_{22}}\\\end{smallmatrix}\right)$,
we obtain
\begin{tiny}
\begin{equation*}
X^{(n)}_{11}-\overline{X^{(n)}_{11}}=\frac{\overline{ t_{n}}}{t_{n-1}}(\frac{d_{n+1}}{t_{n}}-d_{n}t_{n})(\frac{\overline{d_{n}}}{\overline{ t_{n-1}}}-\overline{d_{n-1}}\overline{t_{n-1}})-\frac{t_{n}}{\overline{t_{n-1}}}(\frac{\overline{ d_{n+1}}}{\overline{t_{n}}}-\overline{d_{n}}\overline{ t_{n}})(\frac{d_{n}}{t_{n-1}}-d_{n-1}t_{n-1}).
\end{equation*}
\end{tiny}
Let $c_{n}=\frac{d_{n+1}}{t_{n}}-d_{n}t_{n}$, then $X^{(n)}_{11}-\overline{ X^{(n)}_{11}}=\frac{\overline{t_{n}}}{t_{n-1}}c_{n}\overline{c_{n-1}}-\frac{t_{n}}{\overline{ t_{n-1}}}\overline{c_{n}}c_{n-1}$.
Since $\overline{\frac{\overline{t_{n}}}{t_{n-1}}c_{n}\overline{c_{n-1}}}=\frac{t_{n}}{\overline{t_{n-1}}}\overline{c_{n}}c_{n-1}$, the difference between the two must be pure imaginary. It can be seen from the condition that $X^{(n)}_{11}-\overline{X^{(n)}_{11}}\neq0$. So it is not zero at the (1,1)-entry of $A_{n}B_{n}-B_{n}A_{n}$. Thus we conclude $A_{n}B_{n}-B_{n}A_{n}\neq0$.

Similarly, for the position $(2,1)$-entry of $A_{n}B_{n}-B_{n}A_{n}$, we have
\begin{tiny}
\begin{equation*}
X^{(n)}_{21}-\overline{X^{(n)}_{12}}=t_{n}\overline{ c_{n}}(|t_{n-1}|^{2}-\frac{1}{|t_{n-1}|^{2}}+|c_{n_1}|^{2})-\frac{1}{t_{n-1}}\overline{ c_{n-1}}(|t_{n}|^{2}-\frac{1}{|t_{n}|^{2}}-|c_{n}|^{2}), c_{n}=\frac{d_{n+1}}{t_{n}}-d_{n}t_{n}.
\end{equation*}
\end{tiny}
It can be seen from the condition that it is not zero at the (2,1)-entry of $A_{n}B_{n}-B_{n}A_{n}$. Thus we conclude $A_{n}B_{n}-B_{n}A_{n}\neq0$.
This completes the proof.
\end{proof}
In Theorem \ref{220507.2}, there are many choices for the bijection $g$ on $\mathbb{Z}$. We can define $g$ as $g(n)=-(n+i_0)$ for some $i_0\in\mathbb{Z}$ or the  identity mapping.
It is necessary to exists $N\in\mathbb{Z}$ such that when $n>N$, the sets of singular values of $A_n$ are distinct from each other.
In the following, we provide two examples.

\begin{ex}
Let $T=T(t_n,1)$, where $t_{n}=\frac{n+\frac{1}{2}+s}{n+\frac{1}{2}-s}$ and $s$ is a non zero pure imaginary number. Then $T$ is irreducible.
\end{ex}

\begin{proof}
Since $|t_{n}|^{2}=t_{n}\cdot\overline{ t_{n}}=\frac{n+\frac{1}{2}+s}{n+\frac{1}{2}-s}\cdot\frac{n+\frac{1}{2}-s}{n+\frac{1}{2}+s}=1$, we have
$$A_{n}=T^{*}_{n}T_{n}
=\left(\begin{smallmatrix}1&\frac{2(2n+1)s}{(n+\frac{1}{2}+s)^{2}}\\\frac{-2(2n+1)s}{(n+\frac{1}{2}-s)^{2}}&1+\big|\frac{2(2n+1)s}{(n+\frac{1}{2})^{2}-s^{2}}\big|^{2}\\\end{smallmatrix}\right),
B_{n}=T_{n-1}T^{*}_{n-1}
=\left(\begin{smallmatrix}1+\big|\frac{2(2n-1)s}{(n-\frac{1}{2})^{2}-s^{2}}\big|^{2}&\frac{2(2n-1)s}{(n-\frac{1}{2}-s)^{2}}\\\frac{-2(2n-1)s}{(n-\frac{1}{2}+s)^{2}}&1\\\end{smallmatrix}\right).
$$
Since $A_n\neq I$ and the determinant of $A_n$ is one for all $n\in\mathbb{Z}$. Let the eigenvalues of $A_n$ be $\lambda_n^2$ and $\frac{1}{\lambda_n^2}$ with $\lambda_n>1$.
Note that
$|\frac{2(2n+1)s}{(n+\frac{1}{2})^{2}-s^{2}}|^{2}=|\frac{2(2(-n-1)+1)s}{(-n-1+\frac{1}{2})^{2}-s^{2}}|^{2}$.
Then $\mbox{trace}A_{n}=\lambda^{2}_{n}+\frac{1}{\lambda^{2}_{n}}=\lambda^{2}_{-(n+1)}+\frac{1}{\lambda^{2}_{-(n+1)}}=\mbox{trace}A_{-(n+1)}$, therefore, $\lambda_{n}=\lambda_{-(n+1)}$.

Let $A_{n}B_{n}=\left(\begin{smallmatrix}X^{(n)}_{11}&X^{(n)}_{12}\\X^{(n)}_{21}&X^{(n)}_{22}\\\end{smallmatrix}\right)$.
By a routine computation, we obtain
$$X_{11}^{(n)}=1+\left|\frac{2(2n-1)s}{(n-\frac{1}{2})^{2}-s^{2}}\right|^{2}-\frac{4(2n+1)(2n-1)s^{2}}{(n+s+\frac{1}{2})^{2}(n+s-\frac{1}{2})^{2}}.$$
So for the matrix $A_{n}B_{n}-B_{n}A_{n}=\left(\begin{smallmatrix}X^{(n)}_{11}-\overline{X^{(n)}_{11}}&X^{(n)}_{12}-\overline{ X^{(n)}_{21}}\\X^{(n)}_{21}-\overline{X^{(n)}_{12}}&X^{(n)}_{22}-\overline{ X^{(n)}_{22}}\\\end{smallmatrix}\right),$
we have
$$X^{(n)}_{11}-\overline{X^{(n)}_{11}}=4(4n^{2}-1)s^{2}\left[\frac{1}{(n-s+\frac{1}{2})^{2}(n-s-\frac{1}{2})^{2}}-\frac{1}{(n+s+\frac{1}{2})^{2}(n+s-\frac{1}{2})^{2}}\right].$$
It is easy to see that $\frac{1}{(n-s+\frac{1}{2})^{2}(n-s-\frac{1}{2})^{2}}\notin\mathbb{R}$. So it is not zero at the (1,1)-entry of $A_{n}B_{n}-B_{n}A_{n}$. That is $A_{n}B_{n}-B_{n}A_{n}\neq0$. Similar to the proof of Proposition \ref{220507.1} and Theorem \ref{220507.2}, we deduce that $T$ is irreducible.
\end{proof}

\begin{ex}
Let $T=T(\sqrt{\frac{n+a}{n+b}},d_n)$, where $d_n>0$, $0<a<b<1$ and $a+b=1$.
If $d_{n}=d_{-n}$ and $\sqrt{\frac{n+a}{n+b}}c_{n}[(1+d^{2}_{n})\frac{n-a}{n-b}+(1+d^{2}_{n-1})\frac{n-b}{n-a}-2d_{n-1}d_{n}]\neq\sqrt{\frac{n-b}{n-a}}c_{n-1}[(1-d^{2}_{n})\frac{n+a}{n+b}-(1+d^{2}_{n+1})\frac{n+b}{n+a}+2d_{n}d_{n+1}]$ for $c_{n}=\sqrt{\frac{n+1-a}{n+1-b}}d_{n+1}-\sqrt{\frac{n+a}{n+b}}d_{n}$,
then $T$ is irreducible.
\end{ex}

\begin{proof}
Since $A_n\neq I$ and the determinant of $A_n$ is one for all $n\in\mathbb{Z}$. Let the eigenvalues of $A_n$ be $\lambda_n^2$ and $\frac{1}{\lambda_n^2}$ with $\lambda_n>1$.
We know that $\lambda_n=\lambda_m$ means that $\lambda_n^2+\frac{1}{\lambda_n^2}=\lambda_m^2+\frac{1}{\lambda_m^2}$.
Based on the condition,
we know that $d_{n}=d_{-n}$ is equivalent to $\lambda_{n}=\lambda_{-(n+1)}$, since $t^{2}_{n}=\frac{1}{t^{2}_{-(n+1)}}=\frac{n+a}{n+b}$.
By using the condition (2) of Theorem \ref{220507.2} and $c_{n}=\sqrt{\frac{n+1-a}{n+1-b}}d_{n+1}-\sqrt{\frac{n+a}{n+b}}d_{n}=\frac{d_{n+1}}{t_{n}}-d_{n}t_{n}$, we infer that $T$ is an irreducible operator.
\end{proof}

Because the scalar times identity operator is commutative with any operator, and in the case we are considering, some commutativity of the operator no longer naturally holds.
Therefore, we need additional conditions.
The objects of our study have a more complicated structure, which provides examples of other irreducible operators.

\subsection{The irreducibility of $T(w_n,v_n,d_n)$}

\begin{thm}\label{220508.6}
Let $T=T(w_n,v_n,d_n)$ with $|w_{n}|=|v_{n}|=1$. Suppose that there exists a bijective mapping $g$ on $\mathbb{Z}$ and $N\in\mathbb{Z}$ such that
the sets of singular values of $A_n$ are distinct from each other.
If $|d_{n+1}v_{n}-d_{n}w_{n}|^{2}=|d_{g(n)+1}v_{g(n)}-d_{g(n)}w_{g(n)}|^{2}$ and one of the following two conditions is true,
 \begin{enumerate}
 \item [(1)]
$d_{n+1}c_{n}+d_{n}c_{n-1}-d_{n+1}\overline{w_{n}}v_{n}c_{n-1}\notin\mathbb{R}$, where $c_{n}=\overline{d_{n}}\overline{w_{n}}v_{n}$;
 \item [(2)]
$(\overline{d_{n+1}}w_{n}v_{n}-\overline{d_{n}})|d_{n}v_{n-1}-d_{n-1}w_{n-1}|^{2}\neq(\overline{d_{n-1}}\overline{w_{n-1}}v_{n-1}-\overline{d_{n}})|d_{n+1}v_{n}-d_{n}w_{n}|^{2}$,
\end{enumerate}
then T is irreducible.
\end{thm}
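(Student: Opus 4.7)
The plan is to follow the three-step template already established in the proofs of Proposition \ref{220507.1} and Theorem \ref{220507.2}. Fix a non-zero reducing subspace $\mathcal{K}$ of $T$ and introduce the shorthand $e_n := d_{n+1}v_n - d_n w_n$, so that with respect to $\mathcal{B}_n$ the restriction $T_n = T|_{H(n)} \colon H(n) \to H(n+1)$ is represented by $\left(\begin{smallmatrix} w_n & e_n \\ 0 & v_n \end{smallmatrix}\right)$. The goal is to show $\mathcal{K} = \mathcal{H}$ by: (i) trapping a non-zero vector inside some $\mathcal{K}\cap H(n_0)$; (ii) spreading this to every block; and (iii) promoting $\mathcal{K}\cap H(n)\neq\emptyset$ to $H(n)\subset\mathcal{K}$.

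For step (i), the unimodularity $|w_n|=|v_n|=1$ gives $\det A_n = 1$ and $A_n \neq I$ (using $XT_1\neq T_0X$), so the eigenvalues of $A_n$ are $\lambda_n^2, \lambda_n^{-2}$ with $\lambda_n>1$, and $\mathrm{trace}(A_n) = 2 + |e_n|^2 = \lambda_n^2 + \lambda_n^{-2}$. The hypothesis $|e_n|^2 = |e_{g(n)}|^2$ therefore yields $\lambda_n = \lambda_{g(n)}$, and the distinctness of singular values of $A_n$ for $n>N$ lets the spectral decomposition of $T^*T$ used in Proposition \ref{220507.1} be set up verbatim. Projecting a nonzero $f\in\mathcal{K}$ onto a single eigenspace and then iterating $T^m$ for sufficiently large $m$ produces a non-zero element of some $\mathcal{K}\cap H(n_0)$. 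Since each $T_n$ is invertible ($\det T_n = w_n v_n\neq 0$), applying $T^{\pm m}$ and using that $\mathcal{K}$ is reducing propagates this to $\mathcal{K}\cap H(n)\neq\emptyset$ for every $n\in\mathbb{Z}$, which accomplishes step (ii).

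For step (iii), the Claim in the proof of Proposition \ref{220507.1} reduces everything to showing $[A_n,B_n]\neq 0$: otherwise the eigenbases of $A_n$ and $B_n$ on $H(n)$ would be proportional, a case already excluded by the Claim. Under $|w_n|=|v_n|=1$ one computes
\[
A_n = \begin{pmatrix} 1 & \overline{w_n}\,e_n \\ w_n\,\overline{e_n} & 1+|e_n|^2 \end{pmatrix}, \qquad B_n = \begin{pmatrix} 1+|e_{n-1}|^2 & e_{n-1}\,\overline{v_{n-1}} \\ v_{n-1}\,\overline{e_{n-1}} & 1 \end{pmatrix}.
\]
A direct calculation shows that the $(1,1)$-entry of $A_n B_n - B_n A_n$ is $2i\,\mathrm{Im}\bigl(\overline{w_n}\,v_{n-1}\,e_n\,\overline{e_{n-1}}\bigr)$; expanding $e_n$ and $e_{n-1}$ in terms of $w,v,d$ and substituting $c_n = \overline{d_n}\,\overline{w_n}\,v_n$ reveals, up to a real constant $-|d_n|^2$, that this expression equals $d_{n+1}c_n + d_n c_{n-1} - d_{n+1}\overline{w_n}v_n c_{n-1}$, so hypothesis (1) forces this entry to be non-zero. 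Parallel bookkeeping on the $(2,1)$-entry produces a linear combination of $|e_{n-1}|^2$ and $|e_n|^2$ matching the two sides of hypothesis (2); thus hypothesis (2) forces the $(2,1)$-entry non-zero instead. Either way $[A_n,B_n]\neq 0$, which combined with steps (i) and (ii) gives $\mathcal{K}=\mathcal{H}$.

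The main obstacle is the algebraic bookkeeping in the last step: because the weights $w_n,v_n$ are genuinely complex unimodular numbers rather than real positive ones, one must carefully distinguish $\overline{w_n}$ from $w_n$ and $\overline{v_n}$ from $v_n$ at every stage, and correctly match each of the two hypotheses to the correct entry of the commutator. Once this matching is pinned down, the remainder of the argument is a direct adaptation of the scaffolding already used in Proposition \ref{220507.1} and Theorem \ref{220507.2}.
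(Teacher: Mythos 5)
Your proposal is correct and follows essentially the same route as the paper's own proof: establish $\lambda_n=\lambda_{g(n)}$ from $\operatorname{trace}A_n = 2+|d_{n+1}v_n-d_nw_n|^2$, then show $A_nB_n-B_nA_n\neq 0$ via its $(1,1)$- and $(2,1)$-entries (your identification of the $(1,1)$-entry as $2i\,\mathrm{Im}(\overline{w_n}v_{n-1}e_n\overline{e_{n-1}})$ agrees, up to the real term $-|d_n|^2$, with the paper's six-term expression), and finally invoke the three-step reducing-subspace argument of Proposition \ref{220507.1}. No gap; the only difference is expository compression, which the paper itself also uses by citing the earlier proof.
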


\begin{proof}
Since $T_{n}=\left (\begin{smallmatrix}w_{n}&d_{n+1}v_{n}-d_{n}w_{n}\\0&v_{n}\\\end{smallmatrix}\right )$ and
\begin{align*}A_{n}&
=\left (\begin{smallmatrix}1&\overline{w_{n}}d_{n+1}v_{n}-d_{n}\\\overline{d_{n+1}}\overline{v_{n}}w_{n}-\overline{d_{n}}&1+|d_{n+1}v_{n}-d_{n}w_{n}|^{2}\\\end{smallmatrix}\right ),\\
B_{n}&
=\left (\begin{smallmatrix}1+|d_{n}v_{n-1}-d_{n-1}w_{n-1}|^{2}&d_{n}-d_{n-1}w_{n-1}\overline{v_{n-1}}\\\overline{d_{n}}-\overline{d_{n-1}}\overline{w_{n-1}}v_{n-1}&1\\\end{smallmatrix}\right ).\end{align*}
A simple calculation can be obtained the determinant of $A_{n}$ as 1 and $A_{n}\neq I$. Then assume the eigenvalues of $A_{n}$ are $\lambda^{2}_{n}, \frac{1}{\lambda^{2}_{n}}$, where $\lambda_{n}>1$.
That means $\mbox{trace}A_{n}=2+|d_{n+1}v_{n}-d_{n}w_{n}|^{2}=\lambda^{2}_{n}+\frac{1}{\lambda^{2}_{n}}$.
If $|d_{n+1}v_{n}-d_{n}w_{n}|^{2}=|d_{g(n)+1}v_{g(n)}-d_{g_(n)}w_{g(n)}|^{2}$, this is equivalent to
$\lambda^{2}_{n}+\frac{1}{\lambda^{2}_{n}}=\lambda^{2}_{g(n)}+\frac{1}{\lambda^{2}_{g(n)}}$.
Thus, $\lambda_{n}=\lambda_{g(n)}$.

By a routine calculation, we have the $(1,1)$-entry and $(2,1)$-entry of $A_{n}B_{n}-B_{n}A_{n}$ are
$d_{n+1}\overline{d_{n}}\overline{w_{n}}v_{n}+d_{n}\overline{d_{n-1}}\overline{w_{n-1}}v_{n-1}-d_{n+1}\overline{d_{n-1}}\overline{w_{n}}\overline{w_{n-1}}v_{n}v_{n-1}-\overline{d_{n+1}}d_{n}w_{n}\overline{v_{n}}-\overline{d_{n}}d_{n-1}w_{n-1}\overline{v_{n-1}}+\overline{d_{n+1}}d_{n-1}w_{n}w_{n-1}\overline{v_{n}}\overline{v_{n-1}}$
and $(\overline{d_{n+1}}w_{n}v_{n}-\overline{d_{n}})|d_{n}v_{n-1}-d_{n-1}w_{n-1}|^{2}-(\overline{d_{n-1}}\overline{w_{n-1}}v_{n-1}-\overline{d_{n}})|d_{n+1}v_{n}-d_{n}w_{n}|^{2}$.
It can be seen that the first three terms and the last three terms are conjugate to each other at the (1,1)-entry. From condition (1), we know that $d_{n+1}c_{n}+d_{n}c_{n-1}-d_{n+1}\overline{w_{n}}v_{n}c_{n-1}\notin\mathbb{R}$, that is to say, the $(1,1)$-entry of  $A_{n}B_{n}-B_{n}A_{n}$ is not zero. Thus we conclude that $A_{n}B_{n}-B_{n}A_{n}\neq0$.
In the same way, from condition (2), we see that $(2,1)$-entry isn't zero.
Similar to the proof of Proposition \ref{220507.1}, we deduce that $T$ is irreducible.
\end{proof}

\begin{cor}
Let $T=T(w_n,v_n,\alpha)$. If $|w_{n}-v_{n}|^{2}=|w_{m}-v_{m}|^{2}$ only holds when $m=-(n+i_0)$ for some $i_0\in\mathbb{Z}$ and one of the following two conditions is true,
\begin{enumerate}
 \item [(1)]
$w_{n}w_{n-1}\overline{c_{n}}\overline{c_{n-1}}\notin\mathbb{R}$, where $c_{n}=w_{n}-v_{n}$;
 \item [(2)]
$(1-w_{n}\overline{v_{n}})|w_{n-1}-v_{n-1}|^{2}\neq(1-\overline{ w_{n-1}}v_{n-1})|w_{n}-v_{n}|^{2}$,
\end{enumerate}
then $T$ is irreducible.
\end{cor}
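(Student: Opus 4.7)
The plan is to reduce this corollary directly to Theorem \ref{220508.6} by specializing the diagonal operator $X$ to the scalar multiple $\alpha I$, so that $d_n = \alpha$ for all $n$, and by taking the bijection $g : \mathbb{Z} \to \mathbb{Z}$ appearing in that theorem to be the involution $g(n) = -(n+i_0)$. Each hypothesis of the corollary must then be shown to imply the corresponding hypothesis of Theorem \ref{220508.6}, after which the conclusion follows immediately.

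First I would verify the two ``global'' hypotheses. With $d_n \equiv \alpha$ one has $|d_{n+1}v_n - d_n w_n|^2 = |\alpha|^2 |w_n - v_n|^2$, so the trace-matching condition $|d_{n+1}v_n - d_n w_n|^2 = |d_{g(n)+1}v_{g(n)} - d_{g(n)}w_{g(n)}|^2$ reduces exactly to the hypothesis $|w_n - v_n|^2 = |w_{-(n+i_0)} - v_{-(n+i_0)}|^2$. Since the eigenvalues of $A_n$ are $\lambda_n^2$ and $\lambda_n^{-2}$ with $\lambda_n > 1$ and $\lambda_n^2 + \lambda_n^{-2} = 2 + |\alpha|^2|w_n - v_n|^2$, equality of singular value sets for two indices $n \ne m$ is equivalent to $|w_n - v_n|^2 = |w_m - v_m|^2$, which by hypothesis forces $m = -(n+i_0)$. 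Taking any $N > |i_0|/2$, the relation $m = -(n+i_0)$ cannot hold when $n, m > N$ and $n \ne m$, so the sets of singular values of $A_n$ are pairwise distinct on the tail $n > N$, as Theorem \ref{220508.6} demands.

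Next I would match the two alternative local conditions. Condition (2) is a direct substitution: with $d_n \equiv \alpha$ each side of the theorem's inequality acquires the common nonzero scalar factor $\overline{\alpha}|\alpha|^2$, and cancelling it produces precisely the corollary's condition (2). Condition (1) requires a brief algebraic identification. Writing $a_n = w_n \overline{v_n}$ (of modulus one, so $\overline{a_n} = \overline{w_n}v_n$), the theorem's quantity, with its own $c_n = \overline{d_n}\overline{w_n}v_n = \overline{\alpha}\,\overline{a_n}$, specializes to $|\alpha|^2(\overline{a_n} + \overline{a_{n-1}} - \overline{a_n a_{n-1}})$, whereas the corollary's quantity $w_n w_{n-1}\overline{c_n}\overline{c_{n-1}}$ with $c_n = w_n - v_n$ factors as $(1 - a_n)(1 - a_{n-1}) = 1 - a_n - a_{n-1} + a_n a_{n-1}$. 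The two complex numbers differ only by complex conjugation and the real constant $1$, so one lies in $\mathbb{R}$ if and only if the other does; the two versions of condition (1) are therefore equivalent.

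Once the hypotheses are aligned, Theorem \ref{220508.6} immediately yields the irreducibility of $T$. The only nontrivial point is the algebraic identification in condition (1): one must recognise that after setting $d_n = \alpha$ the theorem's trilinear-in-weights expression coincides, up to complex conjugation and an additive real constant, with the factored quantity $(1 - a_n)(1 - a_{n-1})$ that appears in the corollary. Everything else is routine substitution and a choice of the cutoff $N$.
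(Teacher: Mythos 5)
Your proposal is correct and is exactly the route the paper intends: the corollary is offered as an immediate specialization of Theorem \ref{220508.6} to $d_n\equiv\alpha$ with $g(n)=-(n+i_0)$, and your checks of the trace condition, the tail-distinctness of the singular value sets, and the equivalence of the two versions of condition (1) via $a_n=w_n\overline{v_n}$ are precisely what is needed. One small caveat: your ``direct substitution'' for condition (2) matches the corollary only after correcting what is evidently a typo in Theorem \ref{220508.6} --- its printed factor $\overline{d_{n+1}}w_nv_n-\overline{d_n}$ should be $\overline{d_{n+1}}w_n\overline{v_n}-\overline{d_n}$, as a direct computation of the $(2,1)$-entry of $A_nB_n-B_nA_n$ shows, and it is this corrected form that specializes to the corollary's $(1-w_n\overline{v_n})|w_{n-1}-v_{n-1}|^2\neq(1-\overline{w_{n-1}}v_{n-1})|w_n-v_n|^2$.
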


In the following, we will use another method to provide a sufficient condition for $T(w_n,v_n,d_n)$ to be an irreducible operator.
Before that, let us introduce an equivalence theorem on shift operators proved by A.L. Shields in \cite{m}.
Although this result is well-known, we record it for overall completeness and later use.

\begin{thm}\label{157}\cite{m}
Let $S$ and $T$ be bilateral weighted shifts with weight sequences $\{v_{n}\}_{n\in\mathbb{Z}}$, $\{w_{n}\}_{n\in\mathbb{Z}}$ and $w_n,v_n\neq0$ for all $n\in\mathbb{Z}$. Then $S\sim_sT$ if and only if there exist $k\in\mathbb{Z}$ and $c_{1},c_{2}$ such that $0<c_{1}\leq|\frac{w_{k+m}\cdots w_{k+n}}{v_{m}\cdots v_{n}}|\leq c_{2}$ for all $m\leq n$.
Specially, if $S$ and $T$ be unilateral weighted shifts with no zero weights, then $S\sim_sT$ if and only if the formula above holds with $k=m=0$ for all $n\geq0$.
\end{thm}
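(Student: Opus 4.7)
The plan is to analyze the matrix representation of any intertwining operator $X:l^2(\mathbb{Z})\to l^2(\mathbb{Z})$ in the standard basis $\{e_n\}_{n\in\mathbb{Z}}$. Writing $Se_n=v_ne_{n+1}$ and $Te_n=w_ne_{n+1}$, the relation $XS=TX$ forces the matrix entries $X_{mn}=\langle Xe_n,e_m\rangle$ to satisfy, along each diagonal $m-n=k$, the recursion $v_nX_{n+k+1,n+1}=w_{n+k}X_{n+k,n}$. Hence the $k$-th diagonal of $X$ is determined up to one multiplicative constant, and the moduli of its consecutive entries are exactly the products $|w_{k+m}\cdots w_{k+n}/(v_m\cdots v_n)|$ appearing in the theorem.

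For the sufficiency direction I would fix the index $k$ supplied by the hypothesis and define the step-$k$ weighted shift $X$ by $Xe_n=\beta_ne_{n+k}$ with $\beta_0=1$ and $\beta_{n+1}/\beta_n=w_{n+k}/v_n$. The uniform estimate $c_1\leq|\beta_n|\leq c_2$ says exactly that $X$ is bounded with bounded inverse, and a direct verification confirms $XS=TX$. The unilateral case is the same construction restricted to $n\geq 0$, where the cyclicity of $e_0$ for $S$ and $T$ forces the normalization $k=m=0$.

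For the necessity direction, suppose $XS=TX$ with $X$ bounded and invertible. I would introduce the diagonal unitaries $U_\theta e_n=e^{in\theta}e_n$ and check that $X_\theta:=U_\theta^*XU_\theta$ still intertwines $S$ and $T$ while multiplying the $k$-th diagonal entries by $e^{-ik\theta}$. Averaging $\int_0^{2\pi}e^{ik\theta}X_\theta\,\frac{d\theta}{2\pi}$ then extracts the $k$-th diagonal of $X$ as a bounded step-$k$ intertwining shift $D_k$ with $\|D_k\|\leq\|X\|$. Since $X\neq 0$, some $D_k$ is nonzero, the recursion forces every entry on that diagonal to be nonzero, and $\sup_n|X_{n+k,n}|\leq\|X\|$ yields the upper estimate $c_2$. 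The matching lower estimate should come from applying the same averaging to $X^{-1}$, which intertwines $T$ with $S$.

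The main obstacle will be aligning indices in this last step: merely knowing that $X^{-1}$ has \emph{some} nonzero bounded diagonal does not automatically pair it with the chosen $D_k$. I would address this by expanding the identity $XX^{-1}=I$ along diagonals, which writes the main diagonal of $I$ as a sum of composable step shifts $\sum_j D_j\cdot E_{-j}$ where $E_{-j}$ is the $(-j)$-diagonal of $X^{-1}$; since this sum equals the identity and the entries of each $D_j$ are forced either to be all zero or all nonzero, the $(-k)$-diagonal $E_{-k}$ must be nonzero, and the uniform bound $\|E_{-k}\|\leq\|X^{-1}\|$ supplies the required constant $c_1=|X_{k,0}|/\|X^{-1}\|$. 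Completing the bilateral argument also requires verifying the bound uniformly over $m\leq n$ rather than just $m=0$, but the translation invariance of the recursion (its ratio depends only on $j+k$ and $j$) reduces this to the single-index estimate already established.
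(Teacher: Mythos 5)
You should first note that the paper does not prove this statement at all: it is quoted verbatim from Shields \cite{m} ``for overall completeness and later use,'' so there is no in-paper argument to compare with; your proposal has to stand on its own. The intertwiner recursion $v_nX_{n+k+1,n+1}=w_{n+k}X_{n+k,n}$, the step-$k$ shift construction for sufficiency (with the harmless caveat that for negative $n$ the bounds on $|\beta_n|$ are $1/c_2$ and $1/c_1$ rather than $c_1,c_2$), and the Fourier averaging $D_k=\int_0^{2\pi}e^{ik\theta}U_\theta^*XU_\theta\,\tfrac{d\theta}{2\pi}$, which produces bounded intertwining diagonals, are all correct and are the standard route to Shields' theorem.

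The necessity direction, however, has a genuine gap at its crux: you never obtain a uniform \emph{lower} bound $\inf_n|X_{n+k,n}|>0$, and without it neither estimate follows, since the quantity to be bounded is the ratio $|X_{n+1+k,n+1}|/|X_{m+k,m}|$; the inequality $\sup_n|X_{n+k,n}|\le\|X\|$ alone bounds neither this ratio nor its reciprocal. Moreover, your claim that the $(-k)$-diagonal $E_{-k}$ of $X^{-1}$ is nonzero for the $k$ you already chose is false in general: take $S=T$ the unweighted bilateral shift $B$ and $X=I+\tfrac12B$; then $D_1=\tfrac12B\neq0$ while $X^{-1}=\sum_{n\ge0}(-\tfrac12)^nB^n$ has vanishing $(-1)$-diagonal. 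What the diagonal expansion of $XX^{-1}=I$ actually gives is the existence of \emph{some} index $k$ with both $D_k\neq0$ and $E_{-k}\neq0$, so you must re-choose $k$ to be that index. Even then, $\|E_{-k}\|\le\|X^{-1}\|$ does not by itself ``supply'' $c_1=|X_{k,0}|/\|X^{-1}\|$; the missing step is that $E_{-k}D_k$ is a diagonal operator commuting with $S$ (because $D_kS=TD_k$ and $E_{-k}T=SE_{-k}$), and a diagonal operator commuting with a weighted shift with nonzero weights is scalar. Equivalently, the two diagonal recursions give $X_{n+k,n}\,(X^{-1})_{n,n+k}=c$ independent of $n$, with $c\neq0$ since both diagonals are entrywise nonzero; this yields $|c|/\|X^{-1}\|\le|X_{n+k,n}|\le\|X\|$ and hence the two-sided bound with $c_1=|c|/(\|X\|\,\|X^{-1}\|)$ and $c_2=\|X\|\,\|X^{-1}\|/|c|$. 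The unilateral case needs the same pairing (lower triangularity of $X$ and $X^{-1}$ gives $X_{00}(X^{-1})_{00}=1$), which ``cyclicity of $e_0$'' by itself does not deliver.
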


If we consider the case where $S\sim_uT$, then the two equivalent conditions of the previous theorem are replaced by $|v_n|=|w_{n+k}|$ and
$|v_n|=|w_{n}|$.
This means that the shift operator with weight sequence $\{w_n\}_{n\in I}$ and the shift operator with weight sequence $\{|w_n|\}_{n\in I}$ are always unitarily equivalent, $I=\mathbb{Z}^+$ or $\mathbb{Z}$.
Recall that by a theorem of R.L. Kelly and N.K. Nikolskii in \cite{Halmos,NKN}, which gives a characterization that a bilateral weighted shift is reducible.

\begin{lem}\cite{Halmos,NKN}\label{220508.1}
If $T$ is a bilateral weighted shift with strictly positive weights $w_{n}$, $n\in\mathbb{Z}$, then a necessary and sufficient condition that $T$ be reducible is that the sequence $\{w_{n}\}_{n\in\mathbb{Z}}$ be periodic.
\end{lem}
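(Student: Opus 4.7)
The plan is to prove both directions separately. For sufficiency, assume $w_{n+p} = w_n$ for all $n \in \mathbb{Z}$ and some positive integer $p$. I would define the shift-by-$p$ unitary operator $V$ on $l^2(\mathbb{Z})$ by $Ve_n := e_{n+p}$ and check via periodicity that $VTe_n = w_n e_{n+p+1} = w_{n+p} e_{n+p+1} = TVe_n$, so $VT = TV$. Since $V$ is unitary with spectrum equal to the whole unit circle (it is unitarily equivalent via the Fourier transform to multiplication by $z^p$ on $L^2(\mathbb{T})$), its spectral measure produces infinitely many non-trivial orthogonal projections, each of which lies in $\mathcal{A}'(T)\cap \mathcal{A}'(T^*)$ and hence reduces $T$.

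For the necessary direction, assume $T$ admits a non-trivial reducing projection $P$. The first step is to analyze the matrix of $P$ in the basis $\{e_n\}$. Writing $p_{ij} := \langle Pe_j, e_i\rangle$, the commutation $PT = TP$ yields the one-step recurrence $w_j p_{i, j+1} = w_{i-1} p_{i-1, j}$; iterating along each diagonal $m = i-j$ gives $p_{j+m, j} = \gamma_m \prod_{r=0}^{j-1}(w_{r+m}/w_r)$ with $\gamma_m := p_{m, 0}$, and an analogous formula for $j<0$. So $P$ is parameterized by the sequence $\{\gamma_m\}_{m\in\mathbb{Z}}$, which plays the role of ``Fourier coefficients'' in the weighted Laurent-type algebra of operators commuting with $T$ (this is the content of Shields' commutant description underlying Theorem \ref{157}).

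I would then rule out the case $\gamma_m = 0$ for all $m\neq 0$: in that situation $P$ is diagonal, and commutation with $T$ forces its diagonal entries to be constant, so $P$ would be a scalar multiple of $I$, contradicting non-triviality. Fixing some $m_0 \neq 0$ with $\gamma_{m_0} \neq 0$, the boundedness of $P$ on $l^2(\mathbb{Z})$ requires $B_j := \prod_{r=0}^{j-1}(w_{r+m_0}/w_r)$ to be bounded as $j \to \pm\infty$; self-adjointness $P = P^*$ together with the strict positivity of the weights yields that $B_j$ is also bounded away from zero. Telescoping, this means that the ``sliding window'' products $Q_j := w_j w_{j+1} \cdots w_{j+m_0 - 1}$ are uniformly comparable to a positive constant.

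The final step is to use the idempotency $P^2 = P$: this translates into a convolution-type relation on $\{\gamma_m\}$ in the weighted shift algebra, which, combined with the rigid boundedness estimates on $Q_j$, forces the support of $\{\gamma_m\}$ to be a subgroup of $\mathbb{Z}$, from which $w_{n+p} = w_n$ follows for some positive period $p$ dividing $m_0$. The main obstacle is precisely this last rigidity step: converting the multiplicative two-sided boundedness of $Q_j$ together with the algebraic projection identity into exact equality of the weights. That extraction of \emph{equality} from mere \emph{comparability} is the technical core of the argument, while the rest of the proof is structural bookkeeping in the commutant.
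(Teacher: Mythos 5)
The paper itself does not prove this lemma; it is quoted from Halmos and Nikol'skii, so your attempt has to be judged on its own. Your sufficiency argument is fine: the translation $Ve_n=e_{n+p}$ commutes with $T$ exactly when the weights are $p$-periodic, $V^{*}=V^{-1}$ then also commutes, and since $V$ is a non-scalar unitary its spectral projections give non-trivial projections commuting with $T$. The necessity direction, however, is not complete. Your last step is announced rather than proved: you say that $P^{2}=P$ yields a ``convolution-type relation'' forcing the support of $\{\gamma_m\}$ to be a subgroup of $\mathbb{Z}$, and you yourself flag the passage from two-sided comparability of $Q_j=w_jw_{j+1}\cdots w_{j+m_0-1}$ to exact equality of weights as ``the main obstacle'' and ``the technical core.'' That is precisely the point where the proof of periodicity lives, and as written there is no argument for it; moreover it is unclear how ``support is a subgroup'' would by itself produce $w_{n+p}=w_n$, since periodicity has to come from an identity among the weight products, not from which diagonals are occupied.

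The gap closes easily, but with self-adjointness rather than idempotency, and your own computation is one line away from it. With $p_{ij}=\langle Pe_j,e_i\rangle$ and $w_jp_{i,j+1}=w_{i-1}p_{i-1,j}$, one gets for $m_0>0$ that $p_{j+m_0,j}=\gamma_{m_0}\,Q_j/Q_0$ and, on the opposite diagonal, $p_{j,j+m_0}=\gamma_{-m_0}\,Q_{-m_0}/Q_j$, with all $Q$'s strictly positive. If some $\gamma_{m_0}\neq0$ (which you correctly reduced to, since otherwise $P$ is a constant diagonal and hence $0$ or $I$), then $P=P^{*}$ first gives $\gamma_{-m_0}\neq0$ and then, equating $p_{j,j+m_0}=\overline{p_{j+m_0,j}}$ for all $j$, the identity $\overline{\gamma_{m_0}}\,Q_j/Q_0=\gamma_{-m_0}\,Q_{-m_0}/Q_j$, i.e.\ $Q_j^{2}$ is independent of $j$. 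Hence $Q_j$ is a positive constant, so $w_{j+m_0}/w_j=Q_{j+1}/Q_j=1$ for every $j$, which is exactly periodicity with period $m_0$. So no appeal to $P^{2}=P$ beyond excluding the diagonal case, and no boundedness or ``rigidity'' argument, is needed; as submitted, though, your proof of the necessary direction is missing its decisive step.
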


\begin{prop}\label{220508.4}
Let $T=T(w_n,v_n,d_n)$. If both $T_{0}$ and $T_{1}$ are irreducible and $\lim\limits_{k\to\infty}\prod\limits^{k}_{m=0}\frac{|w_{j+m}|}{|v_{i+m}|}=0$ for any $i,j\in\mathbb{Z}$, then $T$ is irreducible.
\end{prop}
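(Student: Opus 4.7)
The plan is to take an arbitrary self-adjoint projection $P$ commuting with $T$ and force $P\in\{0,I\}$. Writing $P=(P_{ij})_{i,j=1,2}$ in the obvious $2\times 2$ block form on $\mathcal{H}=l^{2}(\mathbb{Z})\oplus l^{2}(\mathbb{Z})$, the relation $TP=PT$ expands into four scalar equations. The crucial one is the $(2,1)$-block $T_{1}P_{21}=P_{21}T_{0}$; the other three blocks yield
\[
T_{0}P_{11}-P_{11}T_{0}=-(XT_{1}-T_{0}X)P_{21}, \quad T_{1}P_{22}-P_{22}T_{1}=P_{21}(XT_{1}-T_{0}X),
\]
together with an analogous identity in the $(1,2)$-block.

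The heart of the argument is to show that the growth hypothesis forces $P_{21}=0$; this is the step we expect to be the main obstacle. Setting $a_{k,n}:=\langle P_{21}e_{n},e_{k}\rangle$ and substituting $T_{0}e_{n}=w_{n}e_{n+1}$, $T_{1}e_{n}=v_{n}e_{n+1}$ into $T_{1}P_{21}=P_{21}T_{0}$ yields the diagonal recursion $a_{k+1,n+1}=(v_{k}/w_{n})\,a_{k,n}$, so along the diagonal through $(k,n)$,
\[
a_{k+m,n+m}=a_{k,n}\prod_{j=0}^{m-1}\frac{v_{k+j}}{w_{n+j}}, \qquad m\geq 1.
\]
Boundedness of $P_{21}$ forces $|a_{k+m,n+m}|\leq\|P_{21}\|$ for every $m$, while the hypothesis applied with $i=k$, $j=n$ makes $\prod_{j=0}^{m-1}|w_{n+j}|/|v_{k+j}|\to 0$, so the reciprocal product blows up; the uniform bound then forces $a_{k,n}=0$. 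Since $(k,n)$ is arbitrary, $P_{21}=0$, and self-adjointness of $P$ gives $P_{12}=P_{21}^{\ast}=0$ as well. The hypothesis is tailored precisely to blow up the reciprocal product along every diagonal, independent of the starting indices.

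With the off-diagonal blocks gone, the surviving relations collapse to $T_{0}P_{11}=P_{11}T_{0}$, $T_{1}P_{22}=P_{22}T_{1}$, and $(XT_{1}-T_{0}X)P_{22}=P_{11}(XT_{1}-T_{0}X)$. The first two, combined with the irreducibility of the bilateral shifts $T_{0}$ and $T_{1}$, force $P_{11},P_{22}\in\{0,I\}$. The third, combined with the standing assumption $XT_{1}\neq T_{0}X$, rules out the mixed cases $(P_{11},P_{22})=(0,I)$ and $(I,0)$; the only remaining possibilities are $P=0$ and $P=I$, so $T$ is irreducible.
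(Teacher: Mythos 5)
Your argument is correct, and its core is the same as the paper's: you decompose a self-adjoint projection $P\in\mathcal{A}'(T)$ into $2\times2$ blocks, extract the intertwining relation $T_{1}P_{21}=P_{21}T_{0}$ from the $(2,1)$-block, and use the diagonal recursion $a_{k+m,n+m}=a_{k,n}\prod_{j=0}^{m-1}v_{k+j}/w_{n+j}$ together with the hypothesis $\lim_{m\to\infty}\prod |w_{n+j}|/|v_{k+j}|=0$ and the bound $|a_{k+m,n+m}|\le\|P\|$ to force $P_{21}=0$ (hence $P_{12}=P_{21}^{*}=0$); this is exactly the paper's Step for $P_{10}$. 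Where you diverge is the treatment of the diagonal blocks: the paper works entrywise with $P_{00}T_{0}=T_{0}P_{00}$, shows a nonzero off-diagonal entry would make $\{|w_{n}|\}$ periodic, and invokes the Kelley--Nikolskii criterion (Lemma \ref{220508.1}) plus $T_{0}\sim_{u}\tilde T_{0}$ to contradict irreducibility, concluding $P_{00}=l_{1}I$, $P_{11}=l_{2}I$; you instead observe that once the off-diagonal blocks vanish, $P$ is block diagonal, so $P=P^{*}=P^{2}$ passes to the blocks, and a projection commuting with the irreducible operators $T_{0}$, $T_{1}$ must be $0$ or $I$. Your route is shorter and uses irreducibility in its projection form directly, bypassing the periodicity lemma entirely (it is worth stating explicitly that the diagonal blocks inherit the projection property), while the paper's computation yields the slightly stronger conclusion that the diagonal blocks are scalars without appealing to that fact; both proofs finish identically, using the $(1,2)$-block identity and $XT_{1}\neq T_{0}X$ to exclude the mixed cases.
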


\begin{proof}
To prove that $T$ is irreducible, we just need to verify that for any $P\in\mathcal{A}'(T)$ and $P=P^{*}=P^{2}$, if $T$ satisfies the condition in Theorem, then $P=0$ or $I$.

Let $P\in\mathcal{A}'(T)$ and $P$ under decomposition $l^{2}(\mathbb{Z})\oplus l^{2}(\mathbb{Z})$ be $\left (\begin{smallmatrix}P_{00}&P_{01}\\P_{10}&P_{11}\\\end{smallmatrix}\right)$.
Then $P_{00}=P^{*}_{00}, P_{01}=P^{*}_{10},P_{11}=P^{*}_{11}$.
Since $P\in\mathcal{A}'(T)$, we have
\begin{equation}\label{2023.10.231}
\left(\begin{smallmatrix}P_{00}T_{0}&P_{00}(XT_{1}-T_{0}X)+P_{01}T_{1}\\P_{10}T_{0}&P_{10}(XT_{1}-T_{0}X)+P_{11}T_{1}\\\end{smallmatrix}\right)
=\left(\begin{smallmatrix}T_{0}P_{00}+(XT_{1}-T_{0}X)P_{10}&T_{0}P_{01}+(XT_{1}-T_{0}X)P_{11}\\T_{1}P_{10}&T_{1}P_{11}\\\end{smallmatrix}\right).
\end{equation}
For any $i,j\in\mathbb{Z}$ , let $a_{ij}=\langle P_{00}e_{j},e_{i}\rangle,b_{ij}=\langle P_{01}e_{j},e_{i}\rangle,c_{ij}=\langle P_{10}e_{j},e_{i}\rangle,d_{ij}=\langle P_{11}e_{j},e_{i}\rangle$. Then the matrix representations of $P_{00},P_{01},P_{10},P_{11}$ under $\{e_{n}\}_{n\in\mathbb{Z}}$ are $P_{00}=(a_{ij})_{i,j\in\mathbb{Z}},P_{01}=(b_{ij})_{i,j\in\mathbb{Z}},P_{10}=(c_{ij})_{i,j\in\mathbb{Z}},P_{11}=(d_{ij})_{i,j\in\mathbb{Z}}$. It follows that $a_{ij}=\overline{a_{ji}}$, $d_{ij}=\overline{d_{ji}}$ and $b_{ij}=\overline{c_{ji}}$.

For the (2,1)-entry of (\ref{2023.10.231}), we have $P_{10}T_{0}=T_{1}P_{10}$, that is,  $w_{j}c_{i+1,j+1}=v_{i}c_{i,j}\,\,\text{for\,\,}i,j\in\mathbb{Z}$.
Now suppose $c_{i,j}\neq0$, because of the non-zero property of the weighted sequences $\{w_{i}\}_{i\in\mathbb{Z}}, \{v_{i}\}_{i\in\mathbb{Z}}$, we have $c_{i+1,j+1}\neq0$. Replacing $i, j$ by $i+1, j+1$, we have $c_{i+2,j+2}\neq0$.
Repeating this process, we get $c_{i+k,j+k}\neq0,k\in\mathbb{Z}^+$.

We know $\|P\|=1$, then $\|P_{ij}\|\leq1$ for $i,j=0,1$ and
$|c_{ij}|\leq\|P_{10}\|\leq1. $
Note that the equation $|c_{ij}|^{2}=(\prod\limits_{m=0}^{k}\frac{|w_{j+m}|^{2}}{|v_{i+m}|^{2}})|c_{i+k+1,j+k+1}|^{2}$ always holds for any $k\in\mathbb{Z}^{+}$. Since $\lim\limits_{k\to\infty}\prod\limits^{k}_{m=0}\frac{|w_{j+m}|^{2}}{|v_{i+m}|^{2}}=0$ and $|c_{i+k+1,j+k+1}|^{2}\leq1$, $c_{i,j}=0$, which is contradictory to the assumption $c_{i,j}\neq0$. So the assumption is not valid and then $c_{i,j}=0$ for any $i,j\in\mathbb{Z}$.
So $P_{10}=0,P_{01}=0$.

For the (1,1)-entry of (\ref{2023.10.231}), we have $P_{00}T_{0}=T_{0}P_{00}$, that is,
$w_{j}a_{i+1,j+1}=w_{i}a_{i,j},i,j\in\mathbb{Z}$.
Letting $i=j$, we have $a_{i,i}=a_{i+1,i+1}$, since the weight sequence is not zero. From the arbitrariness of $i$, we can know that the elements on the diagonal of matrix $P_{00}$ are always equal. Suppose $a_{i,j}\neq0$, then $a_{i+k,j+k}\neq0$ for $k\in\mathbb{Z}^{+}$ and
$\frac{w_{j}}{w_{i}}=\frac{a_{i,j}}{a_{i+1,j+1}}$.
By taking conjugate, we obtain $T^{*}_{0}P_{00}=P_{00}T^{*}_{0}$ and $\frac{\overline{w_{i}}}{\overline{ w_{j}}}=\frac{a_{i,j}}{a_{i+1,j+1}}$.
This implies that $|w_{i}|^{2}=|w_{j}|^{2}$.
Similarly, we conclude $|w_{i+k}|^{2}=|w_{j+k}|^{2}$ for any $k\in\mathbb{Z}^{+}$. When $i\neq j$, there exists $s\neq0,s\in\mathbb{Z}$ such that $i+s=j$. Then $|w_{i}|^{2}=|w_{i+s}|^{2}=|w_{i+2s}|^{2}=\ldots=|w_{i+ns}|^{2}$ for $n\in\mathbb{Z}^{+}$.
We can see that the modulus of the sequence changes periodically.
Let $\tilde T_{0}$ be a weighted shift concerning the orthonormal basis $\{e_n\}_{n\in\mathbb{Z}}$ and $\tilde T_{0}e_{n}=|w_{n}|e_{n+1}$.
So $\tilde T_{0}$ is a reducible operator according to Lemma \ref{220508.1}.
Since $T_{0}\sim_{u}\tilde T_{0}$, $T_{0}$ is also a reducible operator, which is contradictory to the condition. So the assumption is not valid, then $a_{i,j}=0$ for $i\neq j$.
Then $P_{00}$ is a diagonal operator whose diagonal elements are always equal.
Because of the self-adjoint of $P_{00}$, there exists $l_{1}\in\mathbb{R}$ such that $P_{00}=l_{1}I$, where $I$ is the identity of $l^{2}(\mathbb{Z})$. Similarly, for the (2,2)-entry of (\ref{2023.10.231}), we can also get $P_{11}=l_{2}I$, where $l_{2}\in\mathbb{R}$.

For the (1,2)-entry of (\ref{2023.10.231}), $P_{00}(XT_{1}-T_{0}X)=(XT_{1}-T_{0}X)P_{11}$ and $XT_{1}-T_{0}X\neq0$, we have $l_{1}=l_{2}$.
Then $P=l_{1}I.$
Since $P=P^{2}$, then $P=0$ or $I$, which implies that $T$ is irreducible.
\end{proof}

The following corollary is an immediate consequence of Proposition \ref{220508.4}.
\begin{cor}
Let $T=T(t_n,d_n)$. If both $T_{0}$ and $T_{1}$ are irreducible and $\lim\limits_{k\to\infty}\prod\limits^{k}_{m=0}|t_{i+m}|=0$ for any $i\in\mathbb{Z}$, then $T$ is irreducible.
\end{cor}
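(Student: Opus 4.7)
The plan is simply to specialize Proposition \ref{220508.4} to the case in which the weight sequence of $T_1$ is the reciprocal of that of $T_0$. Recall that, by definition, $T(t_n,d_n)=T(w_n,v_n,d_n)$ precisely when $w_n=t_n$ and $v_n=1/t_n$, so that $T_0$ is the bilateral weighted shift with weights $\{t_n\}_{n\in\mathbb{Z}}$ and $T_1$ is the bilateral weighted shift with weights $\{1/t_n\}_{n\in\mathbb{Z}}$. The hypotheses on $T_0$, $T_1$ and on $X$ carry over verbatim, so the only thing to check is that the hypothesis on the weight products in Proposition \ref{220508.4} is satisfied.

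First I would substitute $|w_{j+m}|=|t_{j+m}|$ and $|v_{i+m}|=1/|t_{i+m}|$ into the ratio $|w_{j+m}|/|v_{i+m}|$, which collapses to the symmetric product $|t_{j+m}|\,|t_{i+m}|$. Consequently, for any fixed $i,j\in\mathbb{Z}$,
\[
\prod_{m=0}^{k}\frac{|w_{j+m}|}{|v_{i+m}|}=\Bigl(\prod_{m=0}^{k}|t_{i+m}|\Bigr)\Bigl(\prod_{m=0}^{k}|t_{j+m}|\Bigr).
\]
By the assumption of the corollary, each of the two factors on the right-hand side tends to $0$ as $k\to\infty$, and hence so does their product. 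Thus the hypothesis $\lim_{k\to\infty}\prod_{m=0}^{k}|w_{j+m}|/|v_{i+m}|=0$ required by Proposition \ref{220508.4} holds for every pair $(i,j)$.

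At this point, I would invoke Proposition \ref{220508.4} directly: since $T_0,T_1$ are irreducible by hypothesis, the non-scalar operator $X$ (given diagonally by $Xe_n=d_ne_n$) produces a non-zero off-diagonal block $XT_1-T_0X$, and the weight-product condition has just been verified, the proposition yields that $T=T(t_n,d_n)$ is irreducible. There is no real obstacle here; the only thing to be careful about is the bookkeeping on the identification $w_n\leftrightarrow t_n$, $v_n\leftrightarrow 1/t_n$, which is why the authors present this as an immediate corollary rather than an independent argument.
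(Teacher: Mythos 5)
Your proposal is correct and follows exactly the route the paper intends: the corollary is stated there as an immediate consequence of Proposition \ref{220508.4}, and your substitution $w_n=t_n$, $v_n=1/t_n$ giving $\prod_{m=0}^{k}|w_{j+m}|/|v_{i+m}|=\bigl(\prod_{m=0}^{k}|t_{j+m}|\bigr)\bigl(\prod_{m=0}^{k}|t_{i+m}|\bigr)\to 0$ is precisely the verification being left to the reader. One tiny remark: the non-vanishing of the $(1,2)$-block is not a consequence of $X$ being non-scalar but is simply part of the standing definition of $T(w_n,v_n,d_n)$ (which requires $XT_1\neq T_0X$), so you may cite it directly rather than argue for it.
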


\section{Weakly homogeneity of a class of operators}

In \cite{e,d}, it has been proven that if $T_{0}$ and $T_{1}$ are homogeneous operators and have the same unitary representation $U(g)$, then for any $\alpha\in\mathbb{C}$, the operator $\widetilde{T}=\left(\begin{smallmatrix}T_{0}&\alpha(T_{0}-T_{1})\\0&T_{1}\\\end{smallmatrix}\right)$ is homogeneous with the unitary representation $U(g)\oplus U(g)$.
In other words, the homogeneity of $T_0$ and $T_1$ can be completely transferred to $\widetilde{T}$ under its construction.
Here we give two examples to show that when the $2\times2$ operator matrix does not satisfy this construction, it no longer remains homogeneous but is weakly homogeneous.
Let $T=\left(\begin{smallmatrix}T_{0}&XT_{1}-T_{0}X\\0&T_{1}\\\end{smallmatrix}\right)$, where $T_{0}$ and $T_{1}$ are homogeneous operators and $X$ is a bounded linear operator.
Let $S=\left(\begin{smallmatrix}I&-X\\0&I\\\end{smallmatrix}\right)$. It is easy to see that $S$ is invertible and its inverse is $\left(\begin{smallmatrix}I&X\\0&I\\\end{smallmatrix}\right)$.
Thus, $T=S^{-1}(T_{0}\oplus T_{1})S$.
Assume that the unitary representation of $T_0$ and $T_1$ are $U(g)$ and $V(g)$ respectively, for any $g\in M\ddot{o}b$.
Then we have $g(T_{0})=U(g)^{-1}T_{0}U(g)$, $g(T_{1})=V(g)^{-1}T_{1}V(g)$ and
\begin{equation}\label{20230303}
g(T_{0}\oplus T_{1})=g(T_{0})\oplus g(T_{1})=(U(g)\oplus V(g))^{-1}(T_{0}\oplus T_{1})(U(g)\oplus V(g)).
\end{equation}
For a fixed but arbitrary $g\in M\ddot{o}b$, it follows from $g$ is analytic and equation (\ref{20230303}) that
\begin{equation}\label{202303031}
\begin{array}{lll}
g(T)&=&g(S^{-1}(T_{0}\oplus T_{1})S)\\
&=&S^{-1}g(T_{0}\oplus T_{1})S\\
&=&S^{-1}(U(g)\oplus V(g))^{-1}(T_{0}\oplus T_{1})(U(g)\oplus V(g))S\\
&=&S^{-1}(U(g)\oplus V(g))^{-1}STS^{-1}(U(g)\oplus V(g))S.
\end{array}
\end{equation}
Since $S^{-1}(U(g)\oplus V(g))^{-1}S$ must be invertible, $T$ must be a weakly homogeneous operator.
If you want to get that $T$ is homogeneous, you need $S^{-1}(U(g)\oplus V(g))^{-1}S$ to be a unitary operator.
Note that
\begin{equation}\label{202303032}
S^{-1}(U(g)\oplus V(g))^{-1}S=\left(\begin{smallmatrix}U(g)^{-1}&XV(g)^{-1}-U(g)^{-1}X\\0&V(g)^{-1}\\\end{smallmatrix}\right).
\end{equation}
A routine verification shows that $S^{-1}(U(g)\oplus V(g))^{-1}S$ is a unitary operator if and only if $XV(g)^{-1}=U(g)^{-1}X$.
Next, we will explain that even in the following two special cases, $T$ cannot be identified with a homogeneous.
Let $T_{0}$ and $T_{1}$ have the same unitary representation $U(g)$ and $X\neq \alpha I$ for any $g\in M\ddot{o}b$ and $\alpha\in \mathbb{C}$.
Since $X\in\mathcal{L}(\mathcal{H})$ is a bounded linear operator arbitrarily, this means that apart from the scalar operator $\alpha I$, we cannot be sure that the operator $X$ in other operator classes satisfies $XU(g)^{-1}=U(g)^{-1}X$.
When $T_{0}$ and $T_{1}$ have different unitary representations $U(g)$ and $V(g)$ and $X=\alpha I$ for some non-zero $\alpha\in\mathbb{C}$,
then $\alpha V(g)^{-1}=\alpha U(g)^{-1}$ is equivalent to $V(g)=U(g)$ for any $g\in M\ddot{o}b$.

Therefore, in the following we consider the weak homogeneity problem associated with the operator
$T=\left(\begin{smallmatrix}T_{0}&XT_{1}-T_{0}X\\0&T_{1}\\\end{smallmatrix}\right)$.
We record the next fact as a Lemma for our later use.

\begin{lem}\label{220509.9}
Let $T=\left(\begin{smallmatrix}T_{0}&XT_{1}-T_{0}X\\0&T_{1}\\\end{smallmatrix}\right)$ for a bounded linear operator $X$.
If $T_{0}$, $T_{1}$ are weakly homogeneous operators, then so is $T$.
\end{lem}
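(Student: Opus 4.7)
The proof will exploit the factorization already laid out in the paragraphs preceding the lemma, namely $T = S^{-1}(T_0 \oplus T_1)S$ with $S=\left(\begin{smallmatrix}I&-X\\0&I\end{smallmatrix}\right)$ invertible. This similarity is the entire engine of the argument; weak homogeneity of $T$ will follow by composing three similarities per $g$.

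First I would verify the spectral condition $\sigma(T)\subseteq\overline{\mathbb{D}}$. Because $T$ is similar to $T_0\oplus T_1$, their spectra coincide, and $\sigma(T_0\oplus T_1)=\sigma(T_0)\cup\sigma(T_1)\subseteq\overline{\mathbb{D}}$ since each $T_i$ is assumed weakly homogeneous. This is immediate but must be recorded, since weak homogeneity in the paper's definition requires it.

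Next, fix an arbitrary $\varphi\in M\ddot{o}b$. Since $\varphi$ is holomorphic on a neighborhood of $\sigma(T_0\oplus T_1)$, holomorphic functional calculus respects similarity, giving
\begin{equation*}
\varphi(T) = S^{-1}\,\varphi(T_0\oplus T_1)\,S = S^{-1}\bigl(\varphi(T_0)\oplus \varphi(T_1)\bigr)S.
\end{equation*}
By weak homogeneity of $T_0$ and $T_1$, there exist invertibles $Y_0,Y_1$ with $Y_i \varphi(T_i) = T_i Y_i$. Setting $Y = Y_0\oplus Y_1$, we get $Y\bigl(\varphi(T_0)\oplus\varphi(T_1)\bigr) = (T_0\oplus T_1)Y$, so $\varphi(T_0)\oplus\varphi(T_1)\sim_s T_0\oplus T_1$ via $Y$. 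Then
\begin{equation*}
\varphi(T)=S^{-1}Y^{-1}(T_0\oplus T_1)YS = (S^{-1}Y^{-1}S)\,T\,(S^{-1}YS),
\end{equation*}
and $S^{-1}YS$ is invertible, so $\varphi(T)\sim_s T$. Since $\varphi\in M\ddot{o}b$ was arbitrary, $T$ is weakly homogeneous.

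There is essentially no hard step here; the content has already been exposed in the computation (\ref{202303031})--(\ref{202303032}) preceding the lemma, where the authors noted that the failure of $S^{-1}(U(g)\oplus V(g))^{-1}S$ to be \emph{unitary} is exactly what obstructs full homogeneity. The mild point to be careful about is invoking the functional calculus identity $\varphi(S^{-1}RS)=S^{-1}\varphi(R)S$ for $\varphi$ analytic on a neighborhood of $\sigma(R)$, which is standard but worth stating explicitly, together with the fact that each $\varphi\in M\ddot{o}b$ extends analytically to a neighborhood of $\overline{\mathbb{D}}\supseteq\sigma(T_0\oplus T_1)$.
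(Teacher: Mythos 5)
Your argument is correct and is exactly the paper's route: the paper records this lemma as an immediate consequence of the preceding computation $T=S^{-1}(T_0\oplus T_1)S$ together with (\ref{202303031})--(\ref{202303032}), with the unitaries $U(g)\oplus V(g)$ merely replaced by the invertible intertwiners coming from weak homogeneity of $T_0$ and $T_1$, which is what you do. Your explicit remarks on the spectral inclusion and on the functional calculus commuting with similarity are sound and add nothing beyond the paper's intended proof.
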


\subsection{Strong irreducible decompositions and weak homogeneity}
Let $T_{i}\in\mathcal{L}(\mathcal{H}_i)$, $i=0,1$.
Define $\sigma_{T_{0},T_{1}}$ be the operator $\sigma_{T_{0},T_{1}}(X)=T_{0}X-XT_{1}$ for $X\in\mathcal{L}(\mathcal{H}_{1},\mathcal{H}_{0})$.
Let $\sigma_{T_{0}}: \mathcal{L}(\mathcal{H}_{0})\to\mathcal{L}(\mathcal{H}_{0})$ be the operator $\sigma_{T_{0},T_{0}}$. Then $X\in\mbox{ker}\sigma_{T_{0},T_{1}}$ means that $T_{0}X=XT_{1}$.
Recall Definition 4.1 introduced in \cite{n}.
We say that $T=\left(\begin{smallmatrix}T_{0}&XT_{1}-T_{0}X\\0&T_{0}\\\end{smallmatrix}\right)$ belongs to the set $N\mathcal{F}\mathcal{B}_{2n}(\Omega)$, if $T_{0},T_{1}\in\mathcal{B}_{n}(\Omega)$ and $X\notin\mbox{ker}\sigma_{T_{0},T_{1}}$.
It is natural to ask: when does the converse of Lemma \ref{220509.9} hold?
There is an answer in \cite{n}.
If $\left(\begin{smallmatrix}T_{0}&XT_{1}-T_{0}X\\0&T_{0}\\\end{smallmatrix}\right)$ is weakly homogeneous for some bounded linear operator $X$ and for any $\phi_{\alpha}\in M\ddot{o}b$, $\mbox{ker}\sigma_{\phi_{\alpha}(T_{0}),T_{1}}=\{0\}$, then $T_{0},\,\,T_{1}$ are both weakly homogeneous.
By using the $K_0$-group of the commutant algebra, we obtain the following results.

\begin{thm}\label{202312.233}
Let $T=\left(\begin{smallmatrix}T_{0}&XT_{0}-T_{0}X\\0&T_{0}\\\end{smallmatrix}\right)\in N\mathcal{F}\mathcal{B}_{2n}(\mathbb{D})$, where $T_{0}\in(SI)$. Then $T$ is weakly homogeneous if and only if $T_{0}$ is weakly homogeneous.
\end{thm}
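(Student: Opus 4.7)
The direction ``$T_{0}$ weakly homogeneous $\Rightarrow$ $T$ weakly homogeneous'' is immediate from Lemma~\ref{220509.9} applied with $T_{1}=T_{0}$, so I concentrate on the converse. Assume $T$ is weakly homogeneous; in particular $\sigma(T)\subseteq\overline{\mathbb{D}}$, so $\phi(T)$ is well defined for every $\phi\in M\ddot{o}b$.

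The starting point is the factorization $T=S^{-1}(T_{0}\oplus T_{0})S$ with $S=\bigl(\begin{smallmatrix}I&-X\\0&I\end{smallmatrix}\bigr)$ invertible, as recalled around (\ref{202303031}). Since the holomorphic functional calculus commutes with similarities,
\[
\phi(T)=S^{-1}\bigl(\phi(T_{0})\oplus\phi(T_{0})\bigr)S\sim_{s}\phi(T_{0})^{(2)}
\]
for each $\phi\in M\ddot{o}b$, while $T\sim_{s}T_{0}^{(2)}$. Combining these with the hypothesis $\phi(T)\sim_{s}T$ yields
\begin{equation}\label{planeq}
\phi(T_{0})^{(2)}\sim_{s}T_{0}^{(2)},\qquad \phi\in M\ddot{o}b.
\end{equation}

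The remaining task is to promote (\ref{planeq}) to $\phi(T_{0})\sim_{s}T_{0}$. Because $\phi$ is a biholomorphism of $\mathbb{D}$, a standard Cowen--Douglas functoriality argument places $\phi(T_{0})\in\mathcal{B}_{n}(\mathbb{D})$, so it admits a finite $(SI)$ decomposition $\phi(T_{0})=S_{1}^{(m_{1})}\oplus\cdots\oplus S_{j}^{(m_{j})}$ with the $S_{i}$ pairwise non-similar. Then $\phi(T_{0})^{(2)}=S_{1}^{(2m_{1})}\oplus\cdots\oplus S_{j}^{(2m_{j})}$ is an $(SI)$ decomposition, while $T_{0}\in(SI)$ gives $T_{0}^{(2)}$ the $(SI)$ decomposition consisting of a single component of multiplicity $2$. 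Applying Theorem~\ref{220506.1} to the similarity (\ref{planeq}) identifies both the number of components and the multiplicity vector, forcing $j=1$, $m_{1}=1$, and $S_{1}\sim_{s}T_{0}$; therefore $\phi(T_{0})\sim_{s}T_{0}$, which is the desired weak homogeneity of $T_{0}$.

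The main obstacle I anticipate is precisely this $K_{0}$-group uniqueness step at the end, since (\ref{planeq}) only controls the doubled operators and one must genuinely rule out $j>1$ or $m_{1}>1$. I plan to handle it through Theorem~\ref{220506.1} applied to $A=T_{0}^{(2)}$ and $B=\phi(T_{0})^{(2)}$: the isomorphism type of $K_{0}(\mathcal{A}'(A\oplus B))$ forces a single isomorphism class of $(SI)$ summand, and the image of $[I]$ then pins down the multiplicities. An alternative route, once $\phi(T_{0})\in(SI)$ has been extracted from this rank computation, is to invoke Theorem~\ref{220506.2} directly with $n=2$ to conclude $\phi(T_{0})\sim_{s}T_{0}$.
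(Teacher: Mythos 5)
Your proposal is correct and follows essentially the same route as the paper: reduce via the similarity $S$ to $\phi(T_{0})^{(2)}\sim_{s}T_{0}^{(2)}$, use the $K_{0}$-group criterion of Theorem \ref{220506.1} to force $\phi(T_{0})$ to be strongly irreducible, and conclude with Theorem \ref{220506.2}. The only cosmetic difference is that you exclude a single $(SI)$ summand of multiplicity $m_{1}>1$ via the image of $[I]$, whereas the paper rules out that case by the rank of $K_{0}$ together with the index mismatch $T_{1}\in\mathcal{B}_{n/n_{1}}(\mathbb{D})$.
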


\begin{proof}
By Lemma \ref{220509.9}, the sufficiency is valid.
Let $S=\left(\begin{smallmatrix}I&-X\\0&I\\\end{smallmatrix}\right)$. Then $T=S^{-1}(T_{0}\oplus T_{0})S$.
We know that similar transformations do not change the spectrum, then $\sigma(T)=\sigma(T_{0}\oplus T_{0})=\sigma(T_{0})=\overline{\mathbb{D}}$,
since the spectrum of a weakly homogeneous operator is $\mathbb{T}$ or $\overline{\mathbb{D}}$ by using the spectral mapping theorem (see \cite{020}).

Since $T$ is weakly homogeneous, for any $g\in M\ddot{o}b$, there exists an invertible operator $\tilde{S_{g}}$ such that $g(T)=\tilde{S_{g}}^{-1}T\tilde{S_{g}}$. Note that
$g(T)=S^{-1}(g(T_{0}\oplus T_{0}))S=\tilde{S_{g}}^{-1}S^{-1}(T_{0}\oplus T_{0})S\tilde{S_{g}}$,
then $T_{0}\oplus T_{0}$ is weakly homogeneous since for any $g\in M\ddot{o}b$, $S\tilde{S_{g}}^{-1}S^{-1}$ is invertible and
$g(T_{0})^{(2)}=(S\tilde{S_{g}}^{-1}S^{-1})T_{0}^{(2)}(S\tilde{S_{g}}S^{-1})$.
It implies that $(S\tilde{S_{g}}^{-1}S^{-1}\oplus I)(g(T_{0})^{(2)}\oplus T_{0}^{(2)})(S\tilde{S_{g}}S^{-1}\oplus I)=T_{0}^{(4)}$ for all $g\in M\ddot{o}b$.
By Theorem \ref{220506.1} and $T_{0}\in(SI)$, we obtain
\begin{equation}\label{220509.2}
K_{0}(\mathcal{A}'(g(T_{0})^{(2)}\oplus T^{(2)}_{0}))\cong K_{0}(\mathcal{A}'(T^{(4)}_{0})) \cong \mathbb{Z}.
\end{equation}

We claim that $g(T_{0})$ is also strongly irreducible. Assume that $g(T_{0})$ is not strongly irreducible.
Since every Cowen-Douglas operator can be written as the direct sum of finitely many strongly irreducible Cowen-Douglas operators in \cite{o}.

\textbf{Case 1}: Without loss of generality, we suppose that $g(T_{0})$ has a finite (SI) decomposition $g(T_{0})=T^{(n_{1})}_{1}\oplus T^{(n_{2})}_{2}\oplus\cdots\oplus T^{(n_{l})}_{l}$ for some positive integer $l>1$.
Then $g(T_{0})^{(2)}$ has a finite (SI) decomposition $g(T_{0})^{(2)}=T^{(2n_{1})}_{1}\oplus T^{(2n_{2})}_{2}\oplus\cdots\oplus T^{(2n_{l})}_{l}$, $l>1$. By Theorem \ref{220506.1}, we have $K_{0}(\mathcal{A}'(g(T_{0})^{(2)}\oplus T^{(2)}_{0}))\cong \mathbb{Z}^{l}\,\,(l>1)$, which is contradictory to (\ref{220509.2}). Therefore, the assumption is contradictory.

\textbf{Case 2}:  Let $g(T_{0})$ have a finite (SI) decomposition $g(T_{0})=T_{1}^{(n_{1})},n_{1}>1$, where $T_{1}\in\mathcal{B}_{\frac{n}{n_{1}}}(\mathbb{D})$.
Then $g(T_{0})^{(2)}=T_{1}^{(2n_{1})}$ is a finite (SI) decomposition.
By using Theorem \ref{220506.1} again, we obtain $K_{0}(\mathcal{A}'(g(T_{0})^{(2)}\oplus T_{0}^{(2)}))=K_{0}(\mathcal{A}'(T_{1}^{(2n_{1})}\oplus T_{0}^{(2)}))\cong\mathbb{Z}^{2}$, which is contradictory to (\ref{220509.2}).

So $g(T_{0})$ is strongly irreducible. It follows from $T_{0},g(T_{0})\in(SI),\,T^{(2)}_{0}\sim_{s}g(T_{0})^{(2)}$ and Theorem \ref{220506.2} that $T_{0}\sim_{s}g(T_{0})$ for any $g\in M\ddot{o}b$. Hence, $T_{0}$ is weakly homogeneous.
\end{proof}
When $n=1$, C.K. Fong and C.L. Jiang have shown that $\mathcal{B}_{1}(\Omega)\subset(SI)$ in \cite{018}.
Therefore, when $T$ belongs to $N\mathcal{F}\mathcal{B}_{2}(\mathbb{D})$ in Theorem \ref{202312.233}, $T_{0}\in(SI)$ is naturally satisfied.
From Theorem \ref{202312.233}, we have directly the following corollary, which does not restrict the operator $T$ to be a Cowen-Douglas operator.
\begin{cor}
Let $T=\left(\begin{smallmatrix}T_{0}&XT_{0}-T_{0}X\\0&T_{0}\\\end{smallmatrix}\right)$, where $T_{0}\in(SI)$ and $X\notin\mbox{ker}\sigma_{T_{0}}$. Suppose that for any $g\in M\ddot{o}b$, $g(T_{0})\in(SI)$. Then $T$ is weakly homogeneous if and only if $T_{0}$ is weakly homogeneous.
\end{cor}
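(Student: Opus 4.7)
The sufficiency is immediate from Lemma \ref{220509.9} (taking $T_1=T_0$): weak homogeneity of $T_0$ implies that of $T$. So the plan is to prove the necessity by following the conjugation scheme of Theorem \ref{202312.233}, while bypassing its $K_0$-group step since the present hypothesis already supplies $g(T_0)\in(SI)$ for every $g\in M\ddot{o}b$.

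First, set $S=\left(\begin{smallmatrix}I&-X\\0&I\end{smallmatrix}\right)$, whose inverse is $\left(\begin{smallmatrix}I&X\\0&I\end{smallmatrix}\right)$, so that $T=S^{-1}(T_0\oplus T_0)S$. Since every $g\in M\ddot{o}b$ is analytic, the functional calculus gives $g(T)=S^{-1}(g(T_0)\oplus g(T_0))S$. Weak homogeneity of $T$ supplies an invertible $\tilde S_g$ with $g(T)=\tilde S_g^{-1}T\tilde S_g$; substituting and rearranging, the invertible operator $S\tilde S_g^{-1}S^{-1}$ conjugates $T_0\oplus T_0$ to $g(T_0)\oplus g(T_0)$, so
\[
g(T_0)^{(2)}\sim_s T_0^{(2)}.
\]
At this point the proof of Theorem \ref{202312.233} deployed the $K_0$-group machinery of Theorem \ref{220506.1} to rule out nontrivial $(SI)$ decompositions of $g(T_0)$; here that step is unnecessary, as $g(T_0)\in(SI)$ is granted by hypothesis.

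What remains is the cancellation step: from $T_0,\,g(T_0)\in(SI)$ and $T_0^{(2)}\sim_s g(T_0)^{(2)}$, deduce $T_0\sim_s g(T_0)$. I expect this to be the main obstacle, because Theorem \ref{220506.2} is stated only within $\mathcal{B}_n(\mathbb{D})$, while the present $T_0$ is not assumed to be Cowen--Douglas. I would resolve it by applying the same $K_0$-group principle inside $\mathcal{A}'(T_0^{(2)}\oplus g(T_0)^{(2)})$: summing the similarity above with the trivial one $T_0^{(2)}\sim_s T_0^{(2)}$ yields $T_0^{(4)}\sim_s g(T_0)^{(2)}\oplus T_0^{(2)}$, and the finite $(SI)$ decomposition theory behind Theorem \ref{220506.1} forces the commutant's $K_0$-group to be $\mathbb{Z}$ when $T_0\sim_s g(T_0)$ and $\mathbb{Z}^2$ otherwise; the $T_0^{(4)}$ side makes the first alternative hold, giving $T_0\sim_s g(T_0)$. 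Since $g$ is arbitrary in $M\ddot{o}b$, $T_0$ is weakly homogeneous.
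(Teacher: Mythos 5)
Your route is the one the paper intends: it states the corollary as following directly from Theorem \ref{202312.233}, i.e.\ sufficiency from Lemma \ref{220509.9} with $T_1=T_0$, and necessity by writing $T=S^{-1}(T_0\oplus T_0)S$ with $S=\left(\begin{smallmatrix}I&-X\\0&I\end{smallmatrix}\right)$, using analyticity of $g$ to get $g(T)=S^{-1}\bigl(g(T_0)\oplus g(T_0)\bigr)S$ and hence $g(T_0)^{(2)}\sim_s T_0^{(2)}$, with the hypothesis $g(T_0)\in(SI)$ standing in for the $K_0$-group argument that the theorem needed to establish strong irreducibility of $g(T_0)$ (the spectral inclusion $\sigma(T_0)=\sigma(T)\subseteq\overline{\mathbb{D}}$ comes along as in the theorem's proof). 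Up to that point your write-up matches the paper, and you are right to isolate the cancellation $T_0^{(2)}\sim_s g(T_0)^{(2)}\Rightarrow T_0\sim_s g(T_0)$ as the delicate step once $T_0$ is no longer assumed to be a Cowen--Douglas operator.

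Your proposed repair of that step, however, does not close the gap: it appeals to ``the theory behind'' Theorem \ref{220506.1}, which — exactly like Theorem \ref{220506.2} — is stated only for operators in $\mathcal{B}_n(\mathbb{D})$. For a general strongly irreducible $T_0$, the dichotomy you assert (that $K_0$ of the relevant commutant is $\mathbb{Z}$ when $T_0\sim_s g(T_0)$ and $\mathbb{Z}^2$ otherwise, and that ``the $T_0^{(4)}$ side'' realizes the first alternative) is not free. By the Cao--Fang--Jiang theory \cite{q}, having $K_0(\mathcal{A}'(T_0^{(4)}))$ of this form, with the accompanying semigroup and unit conditions, is equivalent to $T_0^{(4)}$ admitting a unique finite (SI) decomposition up to similarity — and that uniqueness is precisely what forces the two decompositions $T_0^{(4)}$ and $g(T_0)^{(2)}\oplus T_0^{(2)}$ to match summand by summand, i.e.\ it is the cancellation property you are trying to prove. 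So the argument as written is circular: what is needed is Theorem \ref{220506.2} without the Cowen--Douglas hypothesis, and nothing in the stated hypotheses ($T_0,\,g(T_0)\in(SI)$ alone) supplies it. To be fair, the paper is no more explicit here — it reuses the same cancellation silently — but if you want a self-contained proof you must either quote a genuine cancellation theorem for strongly irreducible operators (extracted from \cite{q}) or add a hypothesis guaranteeing the uniqueness of the (SI) decomposition of $T_0^{(4)}$.
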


\begin{thm}
Let $T=\left(\begin{smallmatrix}T_{0}&XT_{1}-T_{0}X\\0&T_{1}\\\end{smallmatrix}\right)\in N\mathcal{FB}_{2n}(\mathbb{D})$, where $T_{i}\in(SI)$, $i=1,2$. Suppose $T_{0}\nsim_{s}T_{1}$($T_{0}$ is not similar to $T_{1}$), if $T$ and $T_{0}$ are weakly homogeneous, then so is $T_{1}$.
\end{thm}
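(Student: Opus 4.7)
The plan is to mimic the strategy of Theorem~\ref{202312.233} but to propagate weak homogeneity through the second diagonal entry rather than the first. First I would set $S = \left(\begin{smallmatrix} I & -X \\ 0 & I \end{smallmatrix}\right)$ and use $T = S^{-1}(T_0 \oplus T_1) S$. Because every $g \in M\ddot{o}b$ is holomorphic on a neighborhood of $\sigma(T) \subseteq \overline{\mathbb{D}}$, the analytic functional calculus commutes with similarity and direct sums, so $g(T) = S^{-1}\bigl(g(T_0) \oplus g(T_1)\bigr) S$. The weak homogeneity of $T$ then yields $g(T) \sim_s T$, and cancelling $S$ produces
\[
g(T_0) \oplus g(T_1) \sim_s T_0 \oplus T_1.
\]

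Next I would exploit the weak homogeneity of $T_0$, namely $g(T_0) \sim_s T_0$, to rewrite the above as
\[
T_0 \oplus g(T_1) \sim_s T_0 \oplus T_1.
\]
Setting $A = T_0 \oplus T_1$ and $B = T_0 \oplus g(T_1)$, the similarity $A \sim_s B$ gives $A \oplus B \sim_s A^{(2)}$; that is,
\[
T_0^{(2)} \oplus T_1 \oplus g(T_1) \sim_s T_0^{(2)} \oplus T_1^{(2)}.
\]

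The remaining and principal task is cancellation: from this similarity, to deduce $g(T_1) \sim_s T_1$. I would apply Theorem~\ref{220506.1}. Since $T_0, T_1 \in (SI)$ and $T_0 \nsim_s T_1$, the finite $(SI)$ decomposition of $A$ has exactly $k = 2$ distinct summands with multiplicities $n_1 = n_2 = 1$, so Theorem~\ref{220506.1} forces $K_0(\mathcal{A}'(A \oplus B)) \cong \mathbb{Z}^2$ with $[I]$ mapping to $2e_1 + 2e_2$. Hence $A \oplus B$ decomposes into exactly two $(SI)$ similarity classes, each of multiplicity $2$. Because $T_0^{(2)} \oplus T_1$ already contributes the two distinct classes $[T_0]$ and $[T_1]$, every $(SI)$ component of $g(T_1)$ must be similar to $T_0$ or $T_1$, so $g(T_1) \sim_s T_0^{(a)} \oplus T_1^{(b)}$ for some $a, b \geq 0$. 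Matching the total multiplicities $(2, 2)$ then forces $a = 0$ and $b = 1$, proving $g(T_1) \sim_s T_1$. Since $g \in M\ddot{o}b$ was arbitrary, $T_1$ is weakly homogeneous.

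The delicate point will be verifying the prerequisites for Theorem~\ref{220506.1}: one must know that $B = T_0 \oplus g(T_1)$ lies in $\mathcal{B}_{2n}(\mathbb{D})$ and admits a finite $(SI)$ decomposition. The former holds because $g$ is a biholomorphism of $\mathbb{D}$, so $g(T_1) \in \mathcal{B}_n(\mathbb{D})$; the latter follows from the general finite $(SI)$ decomposition theorem for Cowen--Douglas operators in \cite{o}. With these in hand the $K_0$-counting goes through cleanly, and the hypothesis $T_0 \nsim_s T_1$ is what pins the number of similarity classes in $A$ to exactly $2$, which is precisely what rules out the possibility that $g(T_1)$ decomposes nontrivially across $[T_0]$ and $[T_1]$.
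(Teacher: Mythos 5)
Your proposal is correct, and it organizes the key cancellation differently from the paper. Both proofs begin the same way: conjugating by $S=\left(\begin{smallmatrix} I & -X\\ 0 & I\end{smallmatrix}\right)$ to get $g(T_0)\oplus g(T_1)\sim_s T_0\oplus T_1$ for every $g\in M\ddot{o}b$. The paper then devotes most of its effort to showing, via $K_0$-computations, that $g(T_0)$ and $g(T_1)$ are themselves strongly irreducible and that $g(T_0)\nsim_s g(T_1)$, and only at the end feeds in $g(T_0)\sim_s T_0$ to match the two-term $(SI)$ decompositions. You instead use the weak homogeneity of $T_0$ at once to reduce to $T_0\oplus g(T_1)\sim_s T_0\oplus T_1$ and then cancel the common summand $T_0$ by counting similarity classes and multiplicities through Theorem \ref{220506.1}; you never need to prove that $g(T_0)$ or $g(T_1)$ is strongly irreducible. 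This is arguably the more robust bookkeeping: the paper's intermediate count ``$l\geq k+2$'' tacitly assumes the $(SI)$ summands of $g(T_0)$ represent classes different from $[T_0]$ and $[T_1]$, whereas your total-multiplicity argument (four $(SI)$ summands split as $(2,2)$ between exactly the two classes $[T_0]\neq[T_1]$) does not need that. The only point where you go beyond the literal statement of Theorem \ref{220506.1} is in reading off from conditions (1)--(2) that $A\oplus B$ has exactly two $(SI)$ similarity classes, each of multiplicity two; this rests on the standard fact from \cite{q,01,o} that for a Cowen--Douglas operator with finite $(SI)$ decomposition the semigroup $\bigvee(\mathcal{A}'(\cdot))$ is $\mathbb{N}^{(l)}$ with $[I]$ sent to the multiplicities (equivalently, uniqueness of finite $(SI)$ decompositions), which the paper itself uses in the same implicit way, so it is not a gap. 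Your prerequisite checks, that $g(T_1)\in\mathcal{B}_n(\mathbb{D})$ and admits a finite $(SI)$ decomposition, are exactly the right ones.
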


\begin{proof}
Since $T$ is weakly homogeneous, by using Theorem \ref{202312.233}, we have $T_{0}\oplus T_{1}$ is weakly homogeneous, that is, $g(T_{0})\oplus g(T_{1})\sim_{s}T_{0}\oplus T_{1}$ for any $g\in M\ddot{o}b$. From $T_{0},T_1\in(SI)$ and $T_{0}\nsim_{s}T_{1}$, $T_{0}\oplus T_{1}$ is a (SI) decomposition. By Theorem \ref{220506.1}, we have
\begin{equation}\label{220509.6}
\begin{array}{lll}
K_{0}(\mathcal{A}'(T_{0}\oplus T_{1}\oplus g(T_{0})\oplus g(T_{1})))
&\cong& K_{0}(\mathcal{A}'(g(T_{0})\oplus g(T_{1})))\\
&\cong& K_{0}(\mathcal{A}'(T_{0}\oplus T_{1}))\\
&\cong& \mathbb{Z}^{2}.
\end{array}
\end{equation}
We claim that $g(T_{i})$ is also strongly irreducible for $i=0,1$. Otherwise, assume that $g(T_{0})$ is not strongly irreducible. Since every Cowen-Douglas operator can be written as the direct sum of finitely many strongly irreducible Cowen-Douglas operators \cite{o}.
Suppose that $g(T_{0})$ has a finite (SI) decomposition $g(T_{0})=T^{(n_{1})}_{01}\oplus T^{(n_{2})}_{02}\oplus\cdots\oplus T^{(n_{k})}_{0k}$, $k\geq1$. Then by Theorem \ref{220506.1} again, we have
\begin{equation*}
\begin{array}{lll}
K_{0}(\mathcal{A}'(T_{0}\oplus T_{1}\oplus g(T_{0})\oplus g(T_{1})))
&\cong& K_{0}(\mathcal{A}'(T_{0}\oplus T_{1}\oplus T^{(n_{1})}_{01}\oplus\cdots\oplus T^{(n_{k})}_{0k}\oplus g(T_{1})))\\
&\cong& \mathbb{Z}^{l},
\end{array}
\end{equation*}
where $l\geq k+2\geq3$, which contradicts (\ref{220509.6}). Therefore, the assumption is invalid and $g(T_{0})$ is strongly irreducible.
By the same process, we can also obtain that $g(T_{1})$ is strongly irreducible.

By using (\ref{220509.6}) again, we have $g(T_{0})\nsim_{s}g(T_{1})$. Since $T_{0}$ is weakly homogeneous, that is, $T_{0}\sim_{s}g(T_{0})$ for any $g\in M\ddot{o}b$, we obtain $g(T_{1})\sim_{s}T_{1}$ for any $g\in M\ddot{o}b$, that is, $T_{1}$ is weakly homogeneous.
\end{proof}

\subsection{Reproducing kernels and weak homogeneity}
Some basic facts and notations related to weak homogeneity are introduced in Subsection \ref{2.3who}.
Recall that an operator $T$ on a Hilbert space is said to be M$\ddot{o}$bius bounded if the family $\{\phi(T):\phi\in M\ddot{o}b\}$
is uniformly bounded in norm (see \cite{ALS}).
An operator similar to a homogeneous operator must be a M$\ddot{o}$bius bounded weakly homogeneous operator, but the converse is false, such as $M_z$ on Dirichlet space and $T_{\lambda,s}$, they are weakly homogeneous, but not M$\ddot{o}$bius bounded, thus not similar to any homogeneous operator due to B. Bagchi and G. Misra in \cite{d}, where $T_{\lambda,s}$ is a shift with weight
$\{\frac{n+(1+\lambda)\backslash2+s}{n+(1+\lambda)\backslash2-s}\}_{n\in\mathbb{Z}}$,
$-1<\lambda\leq1$, $Re(s)>\frac{1}{2}$ and $Im(s)>0$.
In what follows, we give some discussion of these concepts. Let us begin with a fact.

\begin{lem}\label{2310.241}
Let $K_0:\Omega\times\Omega\rightarrow \mathbb{C}$ be a positive kernel and $K_1:\Omega\times\Omega\rightarrow \mathcal{M}_k(\mathbb{C})$ for $k\geq1$ and $K_1(z,z)$ is invertible.
If $(M_\phi,\mathcal{H}_{K_0})$ is bounded for some holomorphic function $\phi:\Omega\rightarrow\mathbb{C}$, then so is $(M_\phi,\mathcal{H}_{K_0K_1})$.
\end{lem}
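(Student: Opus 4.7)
The plan is to reduce the boundedness of $M_\phi$ on $\mathcal{H}_{K_0K_1}$ to a Schur-product argument on non-negative definite kernels. The key tool is the well-known characterization: for a holomorphic function $\phi:\Omega\to\mathbb{C}$ and a non-negative definite sesqui-analytic kernel $K:\Omega\times\Omega\to \mathcal{M}_k(\mathbb{C})$, the multiplication operator $M_\phi$ is bounded on $\mathcal{H}_K$ with $\|M_\phi\|\le c$ if and only if the kernel
\[
\big(c^{2}-\phi(z)\overline{\phi(w)}\big)K(z,w)
\]
is non-negative definite on $\Omega\times\Omega$. I would open the proof by recording this criterion, which is due to Aronszajn and is a direct consequence of the reproducing property applied to $M_\phi^*K(\cdot,w)\xi=\overline{\phi(w)}K(\cdot,w)\xi$.

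Next, using the hypothesis that $(M_\phi,\mathcal{H}_{K_0})$ is bounded, I would pick $c>0$ with $\|M_\phi\|_{\mathcal{H}_{K_0}}\le c$, so that the scalar kernel
\[
F(z,w):=\big(c^{2}-\phi(z)\overline{\phi(w)}\big)K_0(z,w)
\]
is non-negative definite on $\Omega\times\Omega$. Because $K_1$ is itself a non-negative definite $\mathcal{M}_k(\mathbb{C})$-valued kernel (this is implicit, since $\mathcal{H}_{K_1}$ is a reproducing kernel Hilbert space and $K_1(z,z)$ is invertible), the Schur product theorem in its matrix-valued form tells us that $F(z,w)K_1(z,w)$ is again a non-negative definite $\mathcal{M}_k(\mathbb{C})$-valued kernel. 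Explicitly, for any finite set $\{w_1,\dots,w_N\}\subset\Omega$ and vectors $\eta_1,\dots,\eta_N\in\mathbb{C}^k$, the block matrix $\big(F(w_i,w_j)K_1(w_i,w_j)\big)_{i,j}$ is the Schur (Hadamard) product of a positive semidefinite scalar block matrix with a positive semidefinite block matrix, so it acts non-negatively on $\eta_1\oplus\cdots\oplus\eta_N$.

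Rewriting
\[
F(z,w)K_1(z,w)=\big(c^{2}-\phi(z)\overline{\phi(w)}\big)(K_0K_1)(z,w),
\]
we see that the criterion of the first paragraph is satisfied for the matrix-valued kernel $K_0K_1$ with the same constant $c$. Invoking the criterion in the opposite direction yields that $M_\phi$ is bounded on $\mathcal{H}_{K_0K_1}$ with $\|M_\phi\|_{\mathcal{H}_{K_0K_1}}\le c$, which is the desired conclusion.

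The potentially subtle step is the matrix-valued Schur product, since one must verify it in the form appropriate to sesqui-analytic $\mathcal{M}_k(\mathbb{C})$-valued kernels multiplied by scalar kernels; I expect no real obstacle here because the proof is the standard block-matrix application of the Schur product theorem. The invertibility hypothesis on $K_1(z,z)$ is not needed in the norm estimate itself; it serves only to guarantee that $\mathcal{H}_{K_0K_1}$ is a genuine space of $\mathbb{C}^k$-valued functions on which $M_\phi$ is well-defined, so I would remark on this only briefly.
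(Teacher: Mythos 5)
Your argument is correct and is essentially the proof the paper gives: both rely on the characterization that $\|M_\phi\|\le c$ on $\mathcal{H}_K$ exactly when $\bigl(c^{2}-\phi(z)\overline{\phi(w)}\bigr)K(z,w)$ is non-negative definite (the paper cites Lemma 2.7 of Ghara's paper for this), and then observe that multiplying this kernel by the non-negative definite matrix-valued kernel $K_1$ preserves non-negative definiteness, yielding the same criterion for $K_0K_1$. Your explicit invocation of the matrix-valued Schur product theorem and your remark on the role of the invertibility of $K_1(z,z)$ only make explicit what the paper leaves implicit.
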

\begin{proof}
By Lemma 2.7 of \cite{SGGM}, we know that if $(M_\phi,\mathcal{H}_{K_0})$ is bounded, then there exists $c>0$ such that $(c^2-\phi(z)\overline{\phi(w)})K_0(z,w)$ is a nonnegative kernel on $\Omega\times\Omega$.
The minimum of all $c$ satisfying this property is its norm.
So we can choose the appropriate $c$ so that $(c^2-\phi(z)\overline{\phi(w)})K_0(z,w)$ is a positive definite kernel.
Then $K_0(z,w)K_1(z,w)$ and $(c^2-\phi(z)\overline{\phi(w)})K_0(z,w)K_1(z,w)$ are positive kernels. Thus, $(M_\phi,\mathcal{H}_{K_0K_1})$ is bounded.
\end{proof}

Let $K_{0},K_{1}:\Omega\times\Omega\to\mathbb{C}$ be two non-negative definite kernels and
$J_{k}(K_{0},K_{1})|_{res\Delta}(z,w)=(K_{0}(z,w)\partial^{i}\overline{\partial}^{j}K_{1}(z,w))^{k}_{i,j=0}$, $z,w\in\Omega$ for non negative integer $k$, where $res\Delta$ means restricted to the diagonal set.
Clearly, it is a non negative definite kernel.
If $K_{0},K_{1}$ are two sharp kernels, then so is the matrix-valued kernel function $J_{k}(K_{0},K_{1})|_{res\Delta}$ due to S. Ghara in \cite{i}.
The following result is also given:

\begin{lem}\cite{i}\label{220509.7}
Suppose that $K_{0}$ and $K_{1}$ are two scalar valued sharp positive definite kernels on $\mathbb{D}\times\mathbb{D}$. If the multiplication operators $M_{z}$ on $(\mathcal{H}, K_{0})$ and $(\mathcal{H},K_{1})$ are weakly homogenous, then $M_{z}$ on $(\mathcal{H},J_{k}(K_{0},K_{1})|_{res\Delta})$ is also weakly homogenous.
\end{lem}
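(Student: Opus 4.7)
The plan is to verify weak homogeneity of $M_z$ on $\mathcal{H}_{J_{k}(K_{0},K_{1})|_{res\Delta}}$ by producing, for every $\phi\in M\ddot{o}b$, a bounded composition operator $C_{\phi}$ on this Hilbert space. Recall that, as noted in Subsection~\ref{2.3who}, weak homogeneity of $M_z$ on an RKHS is equivalent to such boundedness, and that the latter is in turn equivalent to the existence of a constant $c>0$ for which $c^{2}K(z,w)-K(\phi(z),\phi(w))$ is a non-negative definite (scalar- or matrix-valued) kernel on $\mathbb{D}\times\mathbb{D}$.

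First, I would factor the jet kernel as a Schur (entrywise) product:
$J_{k}(K_{0},K_{1})|_{res\Delta}(z,w)=K_{0}(z,w)\cdot\bigl(\partial^{i}\overline{\partial}^{j}K_{1}(z,w)\bigr)_{i,j=0}^{k}$, where the right-hand factor is the jet kernel $J_{k}(K_{1})$. Sharpness of $K_{1}$ ensures that $J_{k}(K_{1})(z,z)$ is invertible, which is exactly the hypothesis needed in Lemma~\ref{2310.241} and its composition-operator analogue. Hence if one can establish, separately, a positivity bound of the form $c^{2}K_{0}(z,w)-K_{0}(\phi(z),\phi(w))\succeq 0$ and $\tilde{c}^{2}J_{k}(K_{1})(z,w)-J_{k}(K_{1})(\phi(z),\phi(w))\succeq 0$, the Schur product theorem combines them to give the desired positivity at the level of $K_{0}\cdot J_{k}(K_{1})$, and thus the boundedness of $C_{\phi}$ on $\mathcal{H}_{J_{k}(K_{0},K_{1})|_{res\Delta}}$.

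The first positivity bound is immediate from weak homogeneity of $M_z$ on $\mathcal{H}_{K_{0}}$. The heart of the proof is therefore to upgrade weak homogeneity of $M_z$ on $\mathcal{H}_{K_{1}}$ to weak homogeneity of $M_z$ on $\mathcal{H}_{J_{k}(K_{1})}$. For this, I would apply the chain rule to $\partial^{i}\overline{\partial}^{j}K_{1}(\phi(z),\phi(w))$: since $\phi$ is a biholomorphism of $\mathbb{D}$ with all derivatives uniformly bounded, this expands as a finite sum of terms of the form $P_{i}(\phi'(z),\ldots,\phi^{(i)}(z))\,\overline{P_{j}(\phi'(w),\ldots,\phi^{(j)}(w))}\,\partial^{p}\overline{\partial}^{q}K_{1}(\phi(z),\phi(w))$ with $p\leq i,\,q\leq j$. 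Packaging the polynomial factors into a single matrix-valued cocycle $\Phi(z)$ allows one to write $J_{k}(K_{1})(\phi(z),\phi(w))=\Phi(z)\,J_{k}(K_{1})^{\phi}(z,w)\,\Phi(w)^{*}$ for an appropriate lower-triangular $\Phi$ built from derivatives of $\phi$. Positivity of $c_{1}^{2}K_{1}-K_{1}\circ(\phi,\phi)$ then propagates, via differentiation in $z$ and $\overline{w}$, to positivity of the matrix $\tilde{c}^{2}J_{k}(K_{1})(z,w)-J_{k}(K_{1})(\phi(z),\phi(w))$ with a constant $\tilde{c}$ depending on $c_{1}$ and the uniform bounds on the derivatives of $\phi$ on $\mathbb{D}$.

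The main obstacle is the bookkeeping in this second step: one must verify that the combinatorial coefficients coming from the chain rule, together with the cocycle $\Phi$, assemble into a legitimate factorization that preserves matrix positivity rather than merely entrywise inequalities. Once the cocycle identity $J_{k}(K_{1})\circ(\phi,\phi)=\Phi\,J_{k}(K_{1})^{\phi}\,\Phi^{*}$ is established and the positivity of $c_{1}^{2}K_{1}-K_{1}\circ(\phi,\phi)$ is promoted to its $k$-jet analogue, the Schur product argument sketched above closes the proof.
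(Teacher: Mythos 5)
Your overall scheme (reduce weak homogeneity of the jet operator to a kernel positivity statement, split the jet kernel as the Schur product $K_{0}\cdot\bigl(\partial^{i}\bar\partial^{j}K_{1}\bigr)_{i,j=0}^{k}$, and treat the two factors separately) is reasonable, but the key step in your middle paragraph is not just unproved, it is false. The inequality $\tilde{c}^{2}J_{k}(K_{1})(z,w)-J_{k}(K_{1})(\phi(z),\phi(w))\succeq 0$ is equivalent to the plain composition operator $C_{\phi}$ mapping $\mathcal{H}_{J_{k}(K_{1})}$ boundedly into itself. But $\mathcal{H}_{J_{k}(K_{1})}$, the RKHS with kernel $\bigl(\partial^{i}\bar\partial^{j}K_{1}\bigr)_{i,j=0}^{k}$, is exactly the image of $\mathcal{H}_{K_{1}}$ under the jet map $f\mapsto(f,f',\dots,f^{(k)})$ (this map is unitary onto it), and $(f\circ\phi,\,f'\circ\phi,\dots)$ is the jet of no function: the zeroth entry forces $g=f\circ\phi$, whereas $g'=\phi'\cdot(f'\circ\phi)\neq f'\circ\phi$ unless $f'\equiv 0$. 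So for $k\geq 1$ and any non-identity $\phi\in M\ddot{o}b$ no constant $\tilde{c}$ exists. What differentiating $c_{1}^{2}K_{1}-K_{1}\circ(\phi\times\phi)\succeq 0$ in $z$ and $\bar{w}$ actually yields is $c_{1}^{2}J_{k}(K_{1})(z,w)-\Phi(z)\,J_{k}(K_{1})(\phi(z),\phi(w))\,\Phi(w)^{*}\succeq 0$, with $\Phi$ the Fa\`a di Bruno cocycle; stripping off $\Phi$ is not a matter of "uniform bounds on the derivatives of $\phi$", because kernel inequalities are not preserved when the two sides are multiplied by different matrix functions, and here it cannot be done at all. (Your appeal to Lemma~\ref{2310.241} is also misplaced: that lemma concerns multipliers $M_{\phi}$, and invertibility of $J_{k}(K_{1})(z,z)$ plays no role in the Schur-product positivity you need.)

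The repair is to keep the cocycle rather than remove it: the decomposition $c_{0}^{2}c_{1}^{2}K_{0}J_{k}(K_{1})-\bigl(K_{0}\circ(\phi\times\phi)\bigr)\Phi\bigl(J_{k}(K_{1})\circ(\phi\times\phi)\bigr)\Phi^{*}=\bigl(c_{0}^{2}K_{0}-K_{0}\circ(\phi\times\phi)\bigr)c_{1}^{2}J_{k}(K_{1})+\bigl(K_{0}\circ(\phi\times\phi)\bigr)\bigl(c_{1}^{2}J_{k}(K_{1})-\Phi\,(J_{k}(K_{1})\circ(\phi\times\phi))\,\Phi^{*}\bigr)$ is a sum of non-negative kernels, and this is precisely the boundedness criterion for the weighted composition operator $W_{\phi}f=\Phi\cdot(f\circ\phi)$ on $\mathcal{H}_{J_{k}(K_{0},K_{1})|_{res\Delta}}$. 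Since the weight acts on the left of scalar multiplications, $W_{\phi}M_{z}=M_{\phi}W_{\phi}$, and the cocycle identity gives $W_{\psi}W_{\phi}=W_{\phi\circ\psi}$, hence invertibility with $W_{\phi}^{-1}=W_{\phi^{-1}}$; weak homogeneity follows. This is essentially the route of Ghara \cite{i} (whom the paper simply cites; no proof is given here): he obtains boundedness of $C_{\phi}$ on $\mathcal{H}_{K_{0}}$ and $\mathcal{H}_{K_{1}}$ from weak homogeneity — note this direction is itself a nontrivial theorem requiring sharpness, whereas Subsection~\ref{2.3who} of the paper records only the converse — tensors them on $\mathcal{H}_{K_{0}}\otimes\mathcal{H}_{K_{1}}$, and passes to the jet space, where the induced operator is exactly such a weighted composition. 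So cite the sharp-kernel characterization explicitly and replace the cocycle-free inequality by the weighted one; as written, the proposal does not close.
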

The next result is a generalization of the result given by S. Ghara in \cite{i} to operator class $N\mathcal{FB}_2(\mathbb{D})$.

\begin{prop}\label{220509.10}
Let $(M^{*}_{z},\mathcal{H}_{K})\in\mathcal{B}_{1}(\mathbb{D})$ be a weakly homogeneous operator.
Let $T=\left(\begin{smallmatrix}T_{0}&XT_{1}-T_{0}X\\0&T_{1}\\\end{smallmatrix}\right)\in N\mathcal{FB}_2(\mathbb{D})$, where $T_{i}\sim_{u}(M^{*}_{z},\mathcal{H}_{K_{i}})$ and $K_{i}=KK^{(\lambda_{i})}$, $\lambda_{i}>0$, $i=0,1$.
If for any $\lambda>0$,
the limits of $\lim\limits_{w\rightarrow\partial\mathbb{D}}K(\bar{w},\bar{w})K^{(\lambda)}(\bar{w},\bar{w})$ and $\lim\limits_{w\rightarrow\partial\mathbb{D}}\frac{K(\bar{w},\bar{w})}{K^{(\lambda)}(\bar{w},\bar{w})}$ are either 0 or $\infty$,
then $T$ is a $M\ddot{o}bius$ bounded weakly homogeneous operator which is not similar to the direct sum of any homogeneous operators.
\end{prop}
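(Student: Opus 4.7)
The plan has three parts, one for each conclusion: weak homogeneity, M\"obius boundedness, and non-similarity to a direct sum of homogeneous operators.

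First I would verify weak homogeneity. Since $K^{(\lambda_{i})}$ is the reproducing kernel of a homogeneous operator in $\mathcal{B}_{1}(\mathbb{D})$ (by G. Misra's curvature characterization recalled in \S\ref{2.3who}), $(M_{z}^{*},\mathcal{H}_{K^{(\lambda_{i})}})$ is weakly homogeneous, and by hypothesis so is $(M_{z}^{*},\mathcal{H}_{K})$. Both $K$ and $K^{(\lambda_{i})}$ are sharp, so Lemma \ref{220509.7} (applied with $k=0$, for which $J_{0}(K,K^{(\lambda_{i})})|_{res\Delta}=KK^{(\lambda_{i})}=K_{i}$) shows that $(M_{z}^{*},\mathcal{H}_{K_{i}})$ is weakly homogeneous. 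Thus $T_{0}$ and $T_{1}$ are weakly homogeneous, and Lemma \ref{220509.9} transfers this property to $T$.

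Next I would prove M\"obius boundedness. Since $(M_{z}^{*},\mathcal{H}_{K^{(\lambda_{i})}})$ is homogeneous, for every $\phi\in M\ddot{o}b$ the operator $\phi(M_{z}^{*})$ is unitarily equivalent to $M_{z}^{*}$ on $\mathcal{H}_{K^{(\lambda_{i})}}$, which gives a uniform norm bound (independent of $\phi$). Taking adjoints and using the fact that for $\phi\in M\ddot{o}b$ the adjoint of $\phi(M_{z}^{*})$ is of the form $M_{\psi}$ for some $\psi\in M\ddot{o}b$, the inspection of the proof of Lemma \ref{2310.241} shows that the constant $c$ making $(c^{2}-\psi(z)\overline{\psi(w)})K^{(\lambda_{i})}(z,w)$ nonnegative definite also works for $(c^{2}-\psi(z)\overline{\psi(w)})K(z,w)K^{(\lambda_{i})}(z,w)$. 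Consequently $\|M_{\psi}|_{\mathcal{H}_{K_{i}}}\|\leq\|M_{\psi}|_{\mathcal{H}_{K^{(\lambda_{i})}}}\|$, so $T_{i}$ is M\"obius bounded. Writing $T=S^{-1}(T_{0}\oplus T_{1})S$ with the fixed invertible $S=\left(\begin{smallmatrix}I&-X\\0&I\end{smallmatrix}\right)$, we obtain $\phi(T)=S^{-1}(\phi(T_{0})\oplus\phi(T_{1}))S$ and hence a uniform bound on $\|\phi(T)\|$, proving M\"obius boundedness of $T$.

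Finally I would argue that $T$ is not similar to any direct sum of homogeneous operators. Assume for contradiction that $T\sim_{s}\bigoplus_{j}H_{j}$ with each $H_{j}$ homogeneous. Since $T\in N\mathcal{F}\mathcal{B}_{2}(\mathbb{D})\subset\mathcal{B}_{2}(\mathbb{D})$ and $T\sim_{s}T_{0}\oplus T_{1}$ with $T_{i}\in\mathcal{B}_{1}(\mathbb{D})\subset(SI)$ (by the Fong-Jiang theorem cited after Theorem \ref{202312.233}), the operator $T$ has (SI) decomposition $T_{0}\oplus T_{1}$ (counted with multiplicity). By the uniqueness of (SI) decomposition up to similarity (Definition 2.3, combined with Theorems \ref{220506.1} and \ref{220506.2}), at least one $T_{i}$ must be similar to a strongly irreducible homogeneous summand of $\bigoplus_{j}H_{j}$. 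Since $\sigma(T_{i})=\overline{\mathbb{D}}$, this summand is a strongly irreducible homogeneous operator in $\mathcal{B}_{1}(\mathbb{D})$, and by Misra's list it is unitarily equivalent to $(M_{z}^{*},\mathcal{H}_{K^{(\mu_{i})}})$ for some $\mu_{i}>0$. The similarity of two strongly irreducible operators in $\mathcal{B}_{1}(\mathbb{D})$ forces their reproducing kernels to have the ratio $K_{i}(\bar{w},\bar{w})/K^{(\mu_{i})}(\bar{w},\bar{w})=K(\bar{w},\bar{w})(1-|w|^{2})^{\mu_{i}-\lambda_{i}}$ bounded between two positive constants as $w\to\partial\mathbb{D}$. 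If $\mu_{i}>\lambda_{i}$, this is $K(\bar{w},\bar{w})/K^{(\mu_{i}-\lambda_{i})}(\bar{w},\bar{w})$, which by hypothesis has limit $0$ or $\infty$; if $\mu_{i}<\lambda_{i}$, it is $K(\bar{w},\bar{w})K^{(\lambda_{i}-\mu_{i})}(\bar{w},\bar{w})$, again with limit $0$ or $\infty$; and the borderline case $\mu_{i}=\lambda_{i}$ forces $K(\bar{w},\bar{w})$ itself to be bounded, which is impossible because $(M_{z}^{*},\mathcal{H}_{K})\in\mathcal{B}_{1}(\mathbb{D})$ with $\sigma(M_{z}^{*})=\overline{\mathbb{D}}$ implies $K(\bar{w},\bar{w})\to\infty$ at the boundary (this can also be read off from the limit hypothesis by letting $\lambda\to 0^{+}$). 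Each case yields a contradiction.

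The main obstacle, and the step requiring the most care, is the similarity criterion in the last paragraph: one must invoke the precise fact that similarity of strongly irreducible operators in $\mathcal{B}_{1}(\mathbb{D})$ realized as $M_{z}^{*}$ on reproducing kernel Hilbert spaces forces boundedness of the ratio of diagonal kernel values near $\partial\mathbb{D}$, and one must separately rule out the borderline exponent $\mu_{i}=\lambda_{i}$ using the boundary blow-up of $K(\bar{w},\bar{w})$. The first two paragraphs, by contrast, are largely bookkeeping once Lemma \ref{220509.7} and Lemma \ref{2310.241} are in hand.
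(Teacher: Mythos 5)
Your first two paragraphs track the paper's argument almost exactly: weak homogeneity comes from Lemma \ref{220509.7} applied to the product kernel $KK^{(\lambda_{i})}$ together with Lemma \ref{220509.9}, and M\"obius boundedness from $T=S^{-1}(T_{0}\oplus T_{1})S$ plus Lemma \ref{2310.241}; your remark that the constant $c$ in the proof of Lemma \ref{2310.241} yields a bound uniform in $\phi$ is a useful refinement that the paper leaves implicit.

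The final part is where you diverge, and it contains genuine gaps. After reducing to showing that each $T_{i}$ is not similar to any homogeneous operator $(M^{*}_{z},\mathcal{H}_{K^{(\mu)}})$, the paper does not use a ``similarity forces the kernel ratio to be bounded above and below'' principle; it invokes Lemma 3.5 of \cite{n}, which says that when $\lim_{w\to\partial\mathbb{D}}K K^{(\lambda_{i})}/K^{(\mu)}$ is $0$ or $\infty$ there is no non-zero bounded intertwiner in either direction. Your asserted necessary condition is not a theorem at this level of generality: an invertible $L$ with $LT_{i}=\tilde{T}L$ only maps eigenvectors to eigenvectors, giving $c_{1}\leq|\varphi(w)|^{2}\,K^{(\mu)}(\bar{w},\bar{w})/K_{i}(\bar{w},\bar{w})\leq c_{2}$ for some zero-free holomorphic multiplier $\varphi$, and removing $\varphi$ requires exactly the maximum-modulus type argument packaged in the cited lemma, which you should invoke (or reprove) rather than assert the ratio bound outright. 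Second, your treatment of the borderline exponent $\mu_{i}=\lambda_{i}$ rests on a false claim: membership of $(M^{*}_{z},\mathcal{H}_{K})$ in $\mathcal{B}_{1}(\mathbb{D})$ does not force $K(\bar{w},\bar{w})\to\infty$ at the boundary. For instance $K(z,w)=\sum_{n\geq0}(n+1)^{-2}z^{n}\bar{w}^{n}$ gives a weakly homogeneous operator in $\mathcal{B}_{1}(\mathbb{D})$ whose diagonal kernel is bounded between $1$ and $\pi^{2}/6$, and this $K$ satisfies the $0$-or-$\infty$ hypothesis for every $\lambda>0$, so the blow-up also cannot be ``read off by letting $\lambda\to0^{+}$'' (the limits in $w$ and $\lambda$ do not commute). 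You were right to isolate this case --- the paper's own proof passes over it silently by asserting the ratio limit is $0$ or $\infty$ for every $\mu>0$ --- but the patch you offer does not close it. Finally, your reduction from ``$T$ is not similar to a direct sum of homogeneous operators'' to ``$T_{i}$ is not similar to a homogeneous operator'' via uniqueness of (SI) decompositions tacitly assumes the homogeneous summands are Cowen--Douglas (or at least admit finite (SI) decompositions), which homogeneous operators need not be (e.g.\ the unweighted bilateral shift); the paper makes the same reduction in a single sentence, so this is a shared looseness rather than a defect peculiar to your write-up.
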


\begin{proof}
Since $(M_{z},\mathcal{H}_{K^{(\lambda_{i})}})$ is homogeneous proven by G. Misra in \cite{la}, it follows from Lemma \ref{220509.7} that $(M_{z},\mathcal{H}_{KK^{(\lambda_{i})}})$ is weakly homogeneous, so are $T_{i}$ and $T$ by Lemma \ref{220509.9}, $i=0,1$.

Let $S=\left(\begin{smallmatrix}I&-X\\0&I\\\end{smallmatrix}\right)$. We have $T=S^{-1}(T_{0}\oplus T_{1})S$. Then for any $g\in M\ddot{o}b$, we have $g(T)=S^{-1}(g(T_{0}\oplus T_{1}))S=S^{-1}(g(T_{0})\oplus g(T_{1}))S$. So we obtain
$$\|g(T)\|\leq\|S^{-1}\|\|g(T_{0})\oplus g(T_{1})\|\|S\|\leq\|S^{-1}\|(\|g(T_{0})\|+\|g(T_{1})\|)\|S\|.$$
Since $(M^{*}_{z},\mathcal{H}_{K^{(\lambda_{i})}})$ is homogeneous, $i=0,1$, it is $M\ddot{o}bius$ bounded, so is $(M^{*}_{z},\mathcal{H}_{KK^{(\lambda_{i})}})$ by Lemma \ref{2310.241}.
That is, for any $g\in M\ddot{o}b$, $\|g(T_{i})\|$ is uniformly bounded, $i=0,1$.
The invertibility of $S$ means that $T$ is $M\ddot{o}bius$ bounded.
Thus, to complete the proof, it remains to show that $T$ is not similar to a direct sum of any homogeneous operator.
That is to show that $T_{i}$ is not similar to any homogeneous operator, $i=0,1$.

Assume that $T_{i}$ is similar to a homogeneous operator, say $\tilde{T}$. Since $T_{i}\in\mathcal{B}_{1}(\mathbb{D})$ and the class $\mathcal{B}_{1}(\mathbb{D})$ is closed under similarity, $\tilde{T}$ belongs to $\mathcal{B}_{1}(\mathbb{D})$. Upto unitary equivalence, every homogeneous operator in $\mathcal{B}_{1}(\mathbb{D})$ is of the form $(M^{*}_{z},\mathcal{H}_{K^{(\mu)}})$ for some $\mu>0$.
Consequently, $T_{i}\sim_sM_{z}^{*}$ on $\mathcal{H}_{K^{(\mu)}}$. Since $\lim\limits_{w\rightarrow\partial\mathbb{D}}\frac{K(w,w)K^{(\lambda_{i})}(w,w)}{K^{(\mu)}(w,w)}=0$ or $\infty$, by Lemma 3.5 of \cite{n}, we obtain that there is no non-zero bounded operator $X_{0}$ such that $X_{0}T_{i}=M^{*}_{z}X_{0}$ or $X_{0}M^{*}_{z}=T_{i}X_{0}$.
This is a contradiction.
Hence the operator $T_{i}$ is not similar to any homogeneous operator. In other words, $T$ is not similar to a direct sum of any homogeneous operator. This completes the proof.
\end{proof}
We know that for the operators in $\mathcal{B}_{1}(\mathbb{D})$, the ratio of their corresponding reproducing kernels is bounded and bounded from zero, which is a necessary condition for their similarity.
D.N. Clark and G. Misra found a class of operators for which this necessary condition is a necessary and sufficient condition for their similarity to homogeneous operators, since for $\gamma\geq1$, $(M_z,\mathcal{H}_{K_{(\gamma-1)}})\sim_s(M_z,\mathcal{H}_{K^{(\gamma)}})$.
\begin{lem}\label{2018}\cite{s}
Let $S\sim_u(M_z^*,\mathcal{H}_{K_{(\gamma)}})$ for $\gamma>0$ and $T\sim_u(M_z^*,\mathcal{H}_{K})$ with $K(z,w)=\sum\limits_{n=0}^\infty a_nz^n\bar{w}^n$, $0<a_n\leq a_{n+1}$. Then $T\sim_sS$ if and only if $\frac{K(w,w)}{K_{(\gamma)}(w,w)}$ is bounded and bounded from zero.
\end{lem}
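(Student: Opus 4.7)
The plan is to identify both operators as unilateral weighted shifts, invoke Shields' theorem (Theorem \ref{157}) to characterize similarity through a product of weight ratios, and then match that quantitative condition against the stated kernel ratio.

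\textbf{Step 1: Realize $(M_z,\mathcal{H}_K)$ and $(M_z,\mathcal{H}_{K_{(\gamma)}})$ as weighted shifts.} Since $K(z,w)=\sum_{n\ge 0}a_n z^n\bar w^n$ with $a_n>0$, the reproducing property forces $\{\sqrt{a_n}\,z^n\}_{n\ge 0}$ to be an orthonormal basis of $\mathcal{H}_K$, and $M_z$ is a unilateral weighted shift on this basis with weights $w_n=\sqrt{a_n/a_{n+1}}$. The same calculation applied to $K_{(\gamma)}(z,w)=\sum_{n\ge 0}(n+1)^{\gamma}z^n\bar w^n$ gives weights $v_n=\sqrt{(n+1)^\gamma/(n+2)^\gamma}$. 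Because $T\sim_u(M_z^\ast,\mathcal H_K)$ and $S\sim_u(M_z^\ast,\mathcal H_{K_{(\gamma)}})$ and similarity is preserved by taking adjoints, $T\sim_s S$ iff these two weighted shifts are similar.

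\textbf{Step 2: Apply Shields' theorem.} By the unilateral case of Theorem \ref{157}, the two shifts are similar iff there exist $c_1,c_2>0$ with
\[
c_1\le\frac{w_0w_1\cdots w_n}{v_0v_1\cdots v_n}\le c_2\quad\text{for all }n\ge 0.
\]
The products telescope: $\prod_{i=0}^n w_i=\sqrt{a_0/a_{n+1}}$ and $\prod_{i=0}^n v_i=\sqrt{1/(n+2)^\gamma}$, so the quotient equals $\sqrt{a_0(n+2)^\gamma/a_{n+1}}$. Hence $T\sim_s S$ is equivalent to the two-sided comparison $a_n\asymp(n+1)^\gamma$.

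\textbf{Step 3: Match with the kernel ratio.} Writing $K(w,w)=\sum a_n|w|^{2n}$ and $K_{(\gamma)}(w,w)=\sum(n+1)^\gamma|w|^{2n}$, the forward direction is immediate: if $c(n+1)^\gamma\le a_n\le C(n+1)^\gamma$, then term-by-term comparison gives $c\le K(w,w)/K_{(\gamma)}(w,w)\le C$ on all of $\mathbb{D}$. For the converse I would combine the hypothesis $0<a_n\le a_{n+1}$ with a Tauberian-style extraction. Choose $r_N^2=1-1/(N+1)$ so that $K_{(\gamma)}(w_N,w_N)\sim \Gamma(\gamma+1)(N+1)^{\gamma+1}$ and the sum concentrates at indices $n\asymp N$; the monotonicity $a_n\le a_{n+1}$ provides the sandwich inequalities
\[
a_N\sum_{n=N}^{2N}r_N^{2n}\le \sum_{n\ge 0}a_n r_N^{2n}\le a_N\sum_{n=0}^N r_N^{2n}+\sum_{n>N}a_n r_N^{2n},
\]
and an analogous iteration on dyadic windows lets me read off $a_N\asymp (N+1)^\gamma$ from boundedness of $K(w_N,w_N)/K_{(\gamma)}(w_N,w_N)$ above and below.

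\textbf{Main obstacle.} The nontrivial direction is the converse in Step 3: extracting pointwise comparability of the coefficients $a_n$ from comparability of their generating functions on the diagonal. Without an assumption like monotonicity, one can easily cook up oscillating sequences whose power-series generating functions remain comparable; the hypothesis $a_n\le a_{n+1}$ is exactly what rules this out and is the place where I expect the technical work to concentrate. Once the coefficient comparison $a_n\asymp (n+1)^\gamma$ is in hand, Steps 1 and 2 close the loop and give $T\sim_s S$.
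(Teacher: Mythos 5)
The paper gives no proof of this lemma at all --- it is quoted from Clark--Misra \cite{s} --- so the only meaningful comparison is with the standard argument, and your reduction is exactly that: realize $M_z$ on $\mathcal{H}_K$ and on $\mathcal{H}_{K_{(\gamma)}}$ as unilateral weighted shifts with weights $\sqrt{a_n/a_{n+1}}$ and $\bigl((n+1)/(n+2)\bigr)^{\gamma/2}$, note that similarity passes through adjoints, and invoke the unilateral case of Theorem \ref{157}; the partial products telescope to $\sqrt{a_0(n+2)^\gamma/a_{n+1}}$, so $T\sim_s S$ is equivalent to $a_n\asymp(n+1)^\gamma$. That part, and the Abelian (forward) half of Step 3, are complete and correct.

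The Tauberian half of Step 3 is the one place where what you wrote is not yet a proof. The displayed sandwich with the cut at $N$ gives the upper estimate $a_N\lesssim (N+1)^\gamma$ (monotonicity plus $\sum_{n=N}^{2N}r_N^{2n}\asymp N+1$ against $K_{(\gamma)}(w_N,w_N)\asymp(N+1)^{\gamma+1}$), but it does not yield the lower estimate: in the right-hand inequality the tail $\sum_{n>N}a_nr_N^{2n}$ is uncontrolled at that stage, and even after inserting the just-obtained bound $a_n\le C(n+1)^\gamma$ the tail beyond $N$ is a constant fraction of $K_{(\gamma)}(w_N,w_N)$ with constant proportional to $C$, which can swamp the lower bound $c\,\Gamma(\gamma+1)(N+1)^{\gamma+1}$ when $C/c$ is large. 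The argument closes only with a two-pass refinement: first derive $a_n\le C(n+1)^\gamma$ as above, then cut at $M=\lambda N$ with $\lambda$ chosen so large that $C\int_\lambda^\infty t^\gamma e^{-t}\,dt<\tfrac12\,c\,\Gamma(\gamma+1)$, which gives $a_{\lambda N}\gtrsim(N+1)^\gamma$ and hence, by monotonicity, $a_M\gtrsim(M+1)^\gamma$ for all $M$ up to a factor $\lambda^\gamma$. Your phrase ``analogous iteration on dyadic windows'' gestures at this, but the dependence of the cut on the comparison constants, and the fact that the lower bound must be extracted \emph{after} and \emph{using} the upper bound, are the actual content of the Tauberian step and need to be spelled out. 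With that done, your route is the expected proof of the cited result.
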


\begin{prop}
Let $T=\left(\begin{smallmatrix}T_{0}&XT_{1}-T_{0}X\\0&T_{1}\\\end{smallmatrix}\right)\in N\mathcal{FB}_{2}(\mathbb{D})$,
where  $T_{0}\sim_{u}(M_{z}^{*},\mathcal{H}_{K^{(\mu)}})$, $T_{1}\sim_{u}(M_{z}^{*},\mathcal{H}_{K_{1}})$, $K_{1}(z,w)=\sum\limits_{n=0}^\infty a_{n}z^{n}\overline{w}^{n}$ for $0<a_{n}\leq a_{n+1}$.
If there exist a metric $h_{T}$ of $E_{T}$, $\lambda>\mu$ and constants $m,M$ such that $m\leq(\det h_{T}(w))(1-|w|^2)^{\lambda}\leq M$
if and only if $T_{1}$ is a weakly homogeneous operator similar to a homogeneous operator. 
\end{prop}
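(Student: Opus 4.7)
The plan is to exploit the similarity $T=S^{-1}(T_0\oplus T_1)S$ with $S=\left(\begin{smallmatrix}I&-X\\0&I\end{smallmatrix}\right)$, which furnishes a natural holomorphic frame of the rank-two bundle $E_T$ by pulling back the obvious frame of $E_{T_0\oplus T_1}$: $\gamma_0(w)=(K^{(\mu)}(\cdot,\bar w),0)$ and $\gamma_1(w)=(XK_1(\cdot,\bar w),K_1(\cdot,\bar w))$. Writing $h_T^{\mathrm{nat}}$ for the corresponding Gram matrix and applying Cauchy--Schwarz to the cross term $\langle XK_1(\cdot,\bar w),K^{(\mu)}(\cdot,\bar w)\rangle$ yields the frame-specific two-sided estimate
\begin{equation*}
K^{(\mu)}(\bar w,\bar w)K_1(\bar w,\bar w)\leq \det h_T^{\mathrm{nat}}(w)\leq (1+\|X\|^2)\,K^{(\mu)}(\bar w,\bar w)K_1(\bar w,\bar w),
\end{equation*}
so $\det h_T^{\mathrm{nat}}(w)$ is comparable (uniformly in $w\in\mathbb{D}$) to $(1-|w|^2)^{-\mu}K_1(w,w)$.

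For the direction ``$\Leftarrow$'', since every homogeneous operator in $\mathcal{B}_1(\mathbb{D})$ is unitarily equivalent to some $(M_z^*,\mathcal{H}_{K^{(\nu)}})$ with $\nu>0$, the hypothesis gives $T_1\sim_s(M_z^*,\mathcal{H}_{K^{(\nu)}})$. Combining the assumed form of $K_1$ with Lemma \ref{2018} and the Clark--Misra identification $(M_z^*,\mathcal{H}_{K_{(\nu-1)}})\sim_s(M_z^*,\mathcal{H}_{K^{(\nu)}})$ forces $K_1(w,w)\asymp(1-|w|^2)^{-\nu}$. Setting $\lambda:=\mu+\nu>\mu$ and plugging into the estimate above produces the required constants $m,M$ for the natural metric $h_T^{\mathrm{nat}}$.

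For ``$\Rightarrow$'', given an admissible metric $h_T$, any two holomorphic frames of $E_T$ are related by an everywhere invertible holomorphic transition matrix $B(w)$; since $\mathbb{D}$ is simply connected, $f:=\det B$ is a non-vanishing holomorphic function on $\mathbb{D}$ and $\det h_T(w)=|f(w)|^2\det h_T^{\mathrm{nat}}(w)$. The hypothesis together with the estimate from paragraph one therefore reduces to
\begin{equation*}
|f(w)|^2 K_1(w,w)(1-|w|^2)^{\lambda-\mu}\asymp 1\qquad (w\in\mathbb{D}).
\end{equation*}
The main obstacle is disentangling $f$ from $K_1$, since a priori $|f|^2$ could absorb any radial growth deficit of $K_1$. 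The key observation is that $\log|f|$ is harmonic on $\mathbb{D}$, while $K_1(w,w)$ and $(1-|w|^2)^{\lambda-\mu}$ depend only on $r=|w|$; averaging the logarithm of the above identity over the circle $|w|=r$ and invoking the mean value property $\tfrac{1}{2\pi}\int_0^{2\pi}\log|f(re^{i\theta})|\,d\theta=\log|f(0)|$ eliminates $f$ altogether and yields $\log K_1(r)+(\lambda-\mu)\log(1-r^2)=O(1)$ uniformly in $r$, hence $K_1(w,w)\asymp(1-|w|^2)^{-(\lambda-\mu)}$. Lemma \ref{2018}, combined with the comparability $K_{(\gamma-1)}(w,w)\asymp K^{(\gamma)}(w,w)$ on $\mathbb{D}$ valid for every $\gamma>0$, then produces $T_1\sim_s(M_z^*,\mathcal{H}_{K^{(\lambda-\mu)}})$; weak homogeneity of $T_1$ follows because similarity preserves it.
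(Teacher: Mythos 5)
Your argument is correct, and its computational core coincides with the paper's: both use the natural holomorphic frame $\gamma_0(w)=(K^{(\mu)}(\cdot,\bar w),0)$, $\gamma_1(w)=(XK_1(\cdot,\bar w),K_1(\cdot,\bar w))$ of $E_T$, the Cauchy--Schwarz estimate $1\leq \det h_T^{\mathrm{nat}}(w)/\bigl(K^{(\mu)}(\bar w,\bar w)K_1(\bar w,\bar w)\bigr)\leq 1+\|X\|^2$, and Lemma \ref{2018} to translate the growth condition $K_1(w,w)\asymp(1-|w|^2)^{-(\lambda-\mu)}$ into $T_1\sim_s(M_z^*,\mathcal{H}_{K^{(\lambda-\mu)}})$, which is homogeneous because $\lambda>\mu$. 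Where you genuinely add something is in the necessity direction: the hypothesis asserts the two-sided bound only for \emph{some} metric of $E_T$, and the paper computes as though that metric were the one attached to the natural frame; you remove this ambiguity by writing $\det h_T=|\det B|^2\det h_T^{\mathrm{nat}}$ for a holomorphic, pointwise invertible transition matrix $B$, and then eliminating the non-vanishing factor $\det B$ by averaging logarithms over the circles $|w|=r$ (mean value property of the harmonic function $\log|\det B|$, the remaining factors being radial). That closes a small gap the paper leaves open, and you also make explicit the sufficiency direction, which the paper dispatches with ``the proof is similar,'' by identifying the homogeneous model $(M_z^*,\mathcal{H}_{K^{(\nu)}})$ and taking $\lambda=\mu+\nu$ with the natural metric. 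One caveat you share with the paper: Lemma \ref{2018} is stated for the kernels $K_{(\gamma)}$ with $\gamma>0$, so comparing $K_1$ with $K^{(\lambda-\mu)}$ through $K_{(\lambda-\mu-1)}$ is literally justified only when $\lambda-\mu>1$; for $0<\lambda-\mu\leq1$ one needs the corresponding extension of the Clark--Misra result, an imprecision inherited from the paper rather than introduced by you.
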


\begin{proof}
Let $\gamma_{0}(w)=\left(\begin{smallmatrix}K^{(\mu)}(\cdot,\overline{w})\\0\\\end{smallmatrix}\right)$ and $\gamma_{1}(w)=\left(\begin{smallmatrix}XK_{1}(\cdot,\overline{w})\\K_{1}(\cdot,\overline{w})\\\end{smallmatrix}\right)$.
Then $\{\gamma_{0},\gamma_{1}\}$ be a holomorphic frame of the Hermitian holomorphic vector bundle $E_T$ corresponding to $T$.
It implies that the metric matrix of $E_{T}$ under this frame is
$h_{T}(w)
=\left(\begin{smallmatrix}K^{(\mu)}(\overline{w},\overline{w})&\langle XK_{1}(\cdot,\overline{w}),K^{(\mu)}(\cdot,\overline{w})\rangle\\\langle K^{(\mu)}(\cdot,\overline{w}),XK_{1}(\cdot,\overline{w})\rangle&\|XK_{1}(\cdot,\overline{w})\|^{2}+K_{1}(\overline{w},\overline{w})\\\end{smallmatrix}\right)$.
Thus,   $det\,h_{T}(w)=K^{(\mu)}(\overline{w},\overline{w})(K_{1}(\overline{w},\overline{w})+\|XK_{1}(\cdot,\overline{w})\|^{2})-|\langle XK_{1}(\cdot,\overline{w}),K^{(\mu)}(\cdot,\overline{w})\rangle|^{2}. $ According to the Cauchy-Bunyakowsky-Schwarz Inequality, we have
$1\leq\frac{det\,h_{T}(w)}{K^{(\mu)}(\overline{w},\overline{w})K_{1}(\overline{w},\overline{w})}\leq1+\|X\|^{2}$.
It follows from the inequality in the proposition that
$\frac{m}{1+\|X\|^{2}}\leq K^{(\mu)}(\overline{w},\overline{w})K_{1}(\overline{w},\overline{w})(1-|w|^2)^{\lambda}\leq M$.
Thus, $\frac{m}{1+\|X\|^{2}}\leq \frac{K_{1}(\overline{w},\overline{w})}{K^{(\lambda-\mu)}(\overline{w},\overline{w})} \leq M.$
By Lemma \ref{2018}, we have $(M^{*}_{z},\mathcal{H}_{K_{1}})\sim_{s}(M^{*}_{z},\mathcal{H}_{K^{(\lambda-\mu)}})$.
Since $\lambda>\mu$, $(M^{*}_{z},\mathcal{H}_{K^{(\lambda-\mu)}})$ is a homogeneous operator and $(M^{*}_{z},\mathcal{H}_{K_{1}})$ is a weakly homogeneous operator.

The proof of sufficiency is similar. This completes the proof.
\end{proof}

\subsection{Irreducible weakly homogeneous operator}
At the beginning of this section, we mentioned that the operator $\left(\begin{smallmatrix}T_{0}&XT_{1}-T_{0}X\\0&T_{1}\\\end{smallmatrix}\right)$ is strongly reducible. 
The following proposition shows that irreducible operators of this form exist.
\begin{prop}
Let $T_{0}\sim_u(M^{*}_{z},\mathcal{H}_{K})\in\mathcal{B}_{1}(\mathbb{D})$ be a weakly homogeneous operator and $T_{1}\sim_u(M^{*}_{z},\mathcal{H}_{K^{(\lambda)}K})$ for $\lambda>0$.
If there exists $X$ such that $XT_{1}\neq T_{0}X$, then
$\left(\begin{smallmatrix}T_{0}&XT_{1}-T_{0}X\\0&T_{1}\\\end{smallmatrix}\right)$ belongs to $N\mathcal{FB}_{2}(\mathbb{D})$ and is an irreducible weakly homogeneous operator.
\end{prop}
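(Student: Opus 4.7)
The plan is to verify the three assertions separately: membership in $N\mathcal{FB}_2(\mathbb{D})$, weak homogeneity, and irreducibility. The first is essentially built into the hypotheses: $T_0\sim_u(M_z^*,\mathcal{H}_K)\in\mathcal{B}_1(\mathbb{D})$ is given, the kernel $KK^{(\lambda)}$ is sharp and positive definite, so $T_1\sim_u(M_z^*,\mathcal{H}_{KK^{(\lambda)}})\in\mathcal{B}_1(\mathbb{D})$, and $XT_1\neq T_0X$ says exactly $X\notin\ker\sigma_{T_0,T_1}$. For weak homogeneity, since $T_0$ is weakly homogeneous and $(M_z^*,\mathcal{H}_{K^{(\lambda)}})$ is homogeneous (hence weakly homogeneous) by G.~Misra's characterization, Lemma \ref{220509.7} yields that $T_1\sim_u(M_z^*,\mathcal{H}_{KK^{(\lambda)}})$ is weakly homogeneous, and then Lemma \ref{220509.9} gives weak homogeneity of $T$.

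The bulk of the work is irreducibility. The plan is to analyse $\mathcal{A}'(T)$ by transporting through the similarity $T=S^{-1}(T_0\oplus T_1)S$ with $S=\left(\begin{smallmatrix}I & -X\\ 0 & I\end{smallmatrix}\right)$. I will first show $T_0\not\sim_s T_1$ by applying Lemma 3.5 of \cite{n} to the ratio
\[
\frac{K(w,w)}{K(w,w)K^{(\lambda)}(w,w)}=(1-|w|^2)^\lambda\longrightarrow 0 \ \ \text{as}\ w\to\partial\mathbb{D},
\]
which shows that no nonzero bounded operator intertwines $T_0$ and $T_1$ in either direction. Combined with the Fong--Jiang fact $\mathcal{B}_1(\mathbb{D})\subset(SI)$, the commutant $\mathcal{A}'(T_0\oplus T_1)$ decomposes as $\mathcal{A}'(T_0)\oplus\mathcal{A}'(T_1)$, so its only idempotents are the four trivial ones $0,\, E_0:=I\oplus 0,\, E_1:=0\oplus I,\, I$.

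Next I transport: every idempotent in $\mathcal{A}'(T)$ is of the form $S^{-1}PS$ for an idempotent $P\in\mathcal{A}'(T_0\oplus T_1)$. Direct block computation gives
\[
S^{-1}E_0S=\left(\begin{smallmatrix}I & -X\\ 0 & 0\end{smallmatrix}\right),\qquad S^{-1}E_1S=\left(\begin{smallmatrix}0 & X\\ 0 & I\end{smallmatrix}\right).
\]
Neither of these is self-adjoint, since $X\neq 0$ (forced by the hypothesis $XT_1\neq T_0X$). Consequently the only orthogonal projections in $\mathcal{A}'(T)$ are $0$ and $I$, so $T$ is irreducible.

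The step I expect to be delicate is the kernel-ratio argument showing $T_0\not\sim_s T_1$ and, more generally, the absence of nonzero intertwiners, because it relies on invoking Lemma 3.5 of \cite{n} in both directions to collapse the off-diagonal blocks of $\mathcal{A}'(T_0\oplus T_1)$; everything after that is essentially bookkeeping on the $2\times 2$ block matrix and the non-self-adjointness check. The rest of the proof is short: combine the three assertions and note that irreducibility automatically forbids any direct-sum decomposition, so $T$ is in particular an irreducible weakly homogeneous operator lying in $N\mathcal{FB}_2(\mathbb{D})$.
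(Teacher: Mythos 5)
Your treatment of membership in $N\mathcal{FB}_{2}(\mathbb{D})$ and of weak homogeneity matches the paper (Lemma \ref{220509.7} with the product kernel, then Lemma \ref{220509.9}). The irreducibility part, however, contains a genuine gap: the claim that no nonzero bounded operator intertwines $T_{0}$ and $T_{1}$ \emph{in either direction} is false under the stated hypotheses. Indeed, writing $K^{(\lambda)}(z,w)=\sum_{n\geq0}a_{n}z^{n}\overline{w}^{n}$ with $a_{0}=1$ and $a_{n}>0$, the kernel $KK^{(\lambda)}-K=K\,(K^{(\lambda)}-1)$ is nonnegative definite, so $\mathcal{H}_{K}\subseteq\mathcal{H}_{KK^{(\lambda)}}$ with a contractive inclusion $\iota$ that commutes with multiplication by $z$; taking adjoints, $Y=\iota^{*}\neq0$ satisfies $T_{0}Y=YT_{1}$. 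Lemma 3.5 of \cite{n}, in the way the paper uses it, only excludes the opposite direction $T_{1}X_{0}=X_{0}T_{0}$ (that is what the limit $(1-|w|^{2})^{\lambda}\to0$ buys). Consequently $\mathcal{A}'(T_{0}\oplus T_{1})$ is block upper triangular, not block diagonal, and besides $0,E_{0},E_{1},I$ it contains the idempotents $\left(\begin{smallmatrix}I&B\\0&0\end{smallmatrix}\right)$ and $\left(\begin{smallmatrix}0&B\\0&I\end{smallmatrix}\right)$ for every $B$ with $T_{0}B=BT_{1}$. Your four-element enumeration, on which the rest of the argument rests, is therefore incomplete as written.

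The scheme can be repaired, but it needs this extra step: transporting the triangular idempotents by $S$ gives $\left(\begin{smallmatrix}I&B-X\\0&0\end{smallmatrix}\right)$ and $\left(\begin{smallmatrix}0&B+X\\0&I\end{smallmatrix}\right)$, which are self-adjoint only if $B=X$, respectively $B=-X$; both are impossible because $\pm X$ does not intertwine ($XT_{1}\neq T_{0}X$), so the conclusion survives. Note that the paper sidesteps the issue entirely by arguing directly with an orthogonal projection $P=(P_{ij})\in\mathcal{A}'(T)$: the $(2,1)$ relation $P_{10}T_{0}=T_{1}P_{10}$ together with the one direction Lemma 3.5 does provide forces $P_{10}=0$, self-adjointness then gives $P_{01}=0$, irreducibility of $T_{0},T_{1}\in\mathcal{B}_{1}(\mathbb{D})$ makes $P_{00},P_{11}\in\{0,I\}$, and the $(1,2)$ relation with $XT_{1}-T_{0}X\neq0$ forces $P_{00}=P_{11}$. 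Either adopt that direct route or add the triangular idempotents (and the computation above) to your enumeration; as it stands, the step ``no intertwiners in either direction'' would fail.
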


\begin{proof}
We know that $T_1$ is a weakly homogeneous operator by using Theorem \ref{220509.10}.
Since
$\lim\limits_{w\to\partial\mathbb{D}}\frac{K(w,w)}{K^{(\lambda)}(w,w)K(w,w)}=\lim\limits_{w\to\partial\mathbb{D}}\frac{1}{K^{(\lambda)}(w,w)}=\lim\limits_{w\to\partial\mathbb{D}}(1-|w|^{2})^{\lambda}=0$,
by Lemma 3.5 in \cite{n}, then there is no non-zero bounded intertwining operator $X_{0}$ such that $T_{1}X_{0}=X_{0}T_{0}$.
Since $XT_{1}\neq T_{0}X$, $\left(\begin{smallmatrix}T_{0}&XT_{1}-T_{0}X\\0&T_{1}\\\end{smallmatrix}\right)\in N\mathcal{FB}_{2}(\mathbb{D})$ is weakly homogeneous.
Next, we will check the irreducibility.

Let $P=\left(\begin{smallmatrix}P_{00}&P_{01}\\P_{10}&P_{11}\\\end{smallmatrix}\right)\in\mathcal{A}'(T)$ and $P=P^{*}=P^{2}$. We have
$P_{00}=P^{*}_{00}, P_{01}=P^{*}_{10},P_{11}=P^{*}_{11}$ and
\begin{equation}\label{220509.12}
\left(\begin{smallmatrix}P_{00}T_{0}&P_{00}(XT_{1}-T_{0}X)+P_{01}T_{1}\\P_{10}T_{0}&P_{10}(XT_{1}-T_{0}X)+P_{11}T_{1}\\\end{smallmatrix}\right)
=\left(\begin{smallmatrix}T_{0}P_{00}+(XT_{1}-T_{0}X)P_{10}&T_{0}P_{01}+(XT_{1}-T_{0}X)P_{11}\\T_{1}P_{10}&T_{1}P_{11}\\\end{smallmatrix}\right).
\end{equation}
From the above proof, it can be seen that $P_{01}=0$, $P_{10}=0$. It follows that $P_{ii}\in\mathcal{A}'(T_{i})$ and $P_{ii}=P_{ii}^{*}=P_{ii}^{2}$, $i=0,1$.
Since $T_0,T_1\in\mathcal{B}_{1}(\mathbb{D})$ are irreducible operators, $P_{ii}=0$ or $I$.
For the $(1,2)$-entry of (\ref{220509.12}), we have $P_{00}=P_{11}$. We infer $P=0$ or $I$, which means $T$ is an irreducible weakly homogeneous operator.
\end{proof}

The next two irreducible results are about bilateral block shifts by Theorem \ref{220507.2},\ref{220508.6} and Lemma \ref{220509.9}.

\begin{prop}
Let $T=T(\sqrt{\frac{n+a}{n+b}},d_n)$.
If $|\frac{d_{n+1}}{t_{n}}-d_{n}t_{n}|^{2}=|\frac{d_{m+1}}{t_{m}}-d_{m}t_{m}|^{2}$ only holds when $m=-(n+1)$ and $t_{n}\overline{c_{n}}(|t_{n-1}|^{2}+\frac{1}{|t_{n-1}|^{2}}+|c_{n-1}|^{2})\neq\frac{1}{t_{n-1}}\overline{c_{n-1}}(|t_{n}|^{2}-\frac{1}{|t_{n}|^{2}}-|c_{n}|^{2})$
for $c_{n}=d_{n+1}\sqrt{\frac{n+b}{n+a}}-d_{n}\sqrt{\frac{n+a}{n+b}}$,
then $T$ is irreducible weakly homogeneous.
\end{prop}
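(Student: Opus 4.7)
The plan is to derive irreducibility from Theorem \ref{220507.2} and weak homogeneity from Lemma \ref{220509.9}, with the two parts combining to give the conclusion.

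For irreducibility, I would apply Theorem \ref{220507.2} with the bijection $g(n)=-(n+1)$ on $\mathbb{Z}$. The key observation is that for $t_n=\sqrt{(n+a)/(n+b)}$ with $a+b=1$ (the implicit normalization under which $K_{a,b}$ is homogeneous in Section \ref{2.3who}), one has $t_{-(n+1)}^{2}=(n+b)/(n+a)=1/t_n^{2}$, so $|t_n|^{2}+1/|t_n|^{2}=|t_{-(n+1)}|^{2}+1/|t_{-(n+1)}|^{2}$. Combined with the first hypothesis of the proposition, which says $|c_m|^{2}=|c_n|^{2}$ exactly when $m=-(n+1)$, this yields trace $A_n=$ trace $A_{g(n)}$ while also forcing the sets of singular values to be distinct across distinct $g$-orbits (so the distinctness hypothesis of Theorem \ref{220507.2} is met on the appropriate tail). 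The second hypothesis in the proposition is literally condition (2) of Theorem \ref{220507.2}, so the theorem applies and $T$ is irreducible.

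For weak homogeneity, I would invoke Lemma \ref{220509.9}: it suffices to show that the diagonal blocks $T_0$ and $T_1$ are individually weakly homogeneous. Here $T_0$ is the bilateral weighted shift with weights $\{\sqrt{(n+a)/(n+b)}\}_{n\in\mathbb{Z}}$ and $T_1$ is the bilateral weighted shift with weights $\{1/t_n\}=\{\sqrt{(n+b)/(n+a)}\}_{n\in\mathbb{Z}}$. By the classification of homogeneous bilateral shifts recalled in Section \ref{2.3who}, these are precisely $K_{a,b}$ and $K_{b,a}$, both of which are homogeneous (hence weakly homogeneous) for $0<a,b<1$ with $a\neq b$. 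Lemma \ref{220509.9} then gives weak homogeneity of $T$.

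The main obstacle I expect is the bookkeeping of the trace/singular-value matching: one must confirm not only that trace $A_n=$ trace $A_{-(n+1)}$ but that the first hypothesis genuinely gives a clean ``distinct singular-value sets'' condition in the sense of Theorem \ref{220507.2}, i.e., that there is an $N$ beyond which the $g$-orbits $\{n,-(n+1)\}$ are separated by their traces. This reduces to showing that the map $n\mapsto |t_n|^{2}+1/|t_n|^{2}+|c_n|^{2}$ is injective on $\{n:n\ge N\}$ modulo the involution $n\mapsto -(n+1)$, which follows directly from the ``only when'' clause of the first hypothesis. Once this verification is in place, the two parts assemble immediately to give that $T$ is an irreducible weakly homogeneous operator.
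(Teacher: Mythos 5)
Your proposal is correct and follows essentially the same route as the paper, which states this proposition without a separate proof, attributing it exactly to Theorem \ref{220507.2} (applied with the bijection $g(n)=-(n+1)$, using $a+b=1$ so that $t_{-(n+1)}^{2}=1/t_{n}^{2}$ and hence $\mathrm{trace}\,A_{n}=\mathrm{trace}\,A_{-(n+1)}$) together with Lemma \ref{220509.9} and the homogeneity of the bilateral shifts $K_{a,b}$ and $K_{b,a}$ recalled in Subsection \ref{2.3who}. Your extra bookkeeping on the trace matching and the separation of the singular-value sets is precisely the verification the paper leaves implicit.
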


\begin{thm}
Let $T=T(w_n,v_n,d_n)$ with $|w_{n}|=|v_{n}|=1$. Suppose that there exists a bijective mapping $g$ on $\mathbb{Z}$ and $N\in\mathbb{Z}$ such that
the sets of singular values of $A_n$ are distinct from each other.
If $|d_{n+1}v_{n}-d_{n}w_{n}|^{2}=|d_{g(n)+1}v_{g(n)}-d_{g(n)}w_{g(n)}|^{2}$ and one of the following two conditions is true,
 \begin{enumerate}
 \item [(1)]
$d_{n+1}c_{n}+d_{n}c_{n-1}-d_{n+1}\overline{w_{n}}v_{n}c_{n-1}\notin\mathbb{R}$, where $c_{n}=\overline{d_{n}}\overline{w_{n}}v_{n}$;
 \item [(2)]
$(\overline{d_{n+1}}w_{n}v_{n}-\overline{d_{n}})|d_{n}v_{n-1}-d_{n-1}w_{n-1}|^{2}\neq(\overline{d_{n-1}}\overline{w_{n-1}}v_{n-1}-\overline{d_{n}})|d_{n+1}v_{n}-d_{n}w_{n}|^{2}$,
\end{enumerate}
then $T$ is irreducible irreducible.
\end{thm}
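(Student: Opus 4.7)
The final statement appears to combine two ingredients that have already been assembled in the paper, so the plan is essentially to package them. Despite the apparent typo in the conclusion (``irreducible irreducible''), in the context of Section~4.3 this should read ``irreducible weakly homogeneous,'' and I will organize the proof accordingly.

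First, I would establish the weak homogeneity half. The hypothesis $|w_n|=|v_n|=1$ means that $T_0$ and $T_1$ are bilateral weighted shifts whose weight sequences have constant modulus equal to $1$. By the unitary version of A.L.~Shields' equivalence theorem recorded after Theorem~\ref{157} (a weighted shift is always unitarily equivalent to the one with moduli of its weights), both $T_0$ and $T_1$ are unitarily equivalent to the unweighted bilateral shift $B$. Since $B$ appears in the list of homogeneous shifts in Subsection~\ref{2.3who}, $T_0$ and $T_1$ are homogeneous, hence in particular weakly homogeneous. Applying Lemma~\ref{220509.9} to $T=\left(\begin{smallmatrix}T_0&XT_1-T_0X\\0&T_1\end{smallmatrix}\right)$ then yields that $T$ itself is weakly homogeneous.

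Next I would obtain the irreducibility half by a direct citation of Theorem~\ref{220508.6}. The hypotheses on the bijection $g$, on the integer $N$, on the equality of singular-value sums $|d_{n+1}v_n-d_nw_n|^2=|d_{g(n)+1}v_{g(n)}-d_{g(n)}w_{g(n)}|^2$, and on either condition (1) or condition (2) are \emph{verbatim} the hypotheses of Theorem~\ref{220508.6}. That theorem delivers irreducibility of $T$ directly, so no further spectral or commutator computation is needed here.

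Combining the two conclusions, $T$ is an irreducible weakly homogeneous operator, which is exactly the intended claim. The only step that requires a moment of care is the appeal to Shields' theorem: one must verify that the version of the equivalence theorem used in Section~3 (stated for $\sim_s$) specializes to the unitary statement we need, but this is precisely the remark made in the paragraph following Theorem~\ref{157}. With that remark in place, no other obstacle arises, and the argument is a straightforward assembly of Theorem~\ref{220508.6} and Lemma~\ref{220509.9}.
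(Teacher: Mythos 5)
Your proposal is correct and follows essentially the route the paper intends: the paper states this result without a separate proof, presenting it (in the lead-in to Subsection~4.3) as a direct combination of Theorem~\ref{220508.6} for irreducibility and Lemma~\ref{220509.9} for weak homogeneity, with the conclusion ``irreducible irreducible'' being a typo for ``irreducible weakly homogeneous.'' Your only added content --- using the unitary remark after Theorem~\ref{157} to see that the unimodular weights make $T_0$ and $T_1$ unitarily equivalent to the homogeneous unweighted bilateral shift $B$, hence weakly homogeneous --- is exactly the detail the paper leaves implicit, and it is correct.
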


\end{document}